\newtheorem{theorem}{Theorem}
\newtheorem{remark}[theorem]{Remark}
\newtheorem{proposition}[theorem]{Proposition}
\newtheorem{corollary}[theorem]{Corollary}
\newtheorem{lemma}[theorem]{Lemma}
\newtheorem*{rmk}{Remark}
\newtheorem{defn}{Definition}
\DeclarePairedDelimiter{\pare}{(}{)}
\DeclareMathOperator{\rE}{\mathbb{E}}
\DeclareMathOperator{\rP}{\mathbb{P}}
\DeclareMathOperator{\id}{\mathrm{id}}
\newcommand{\re}{\mathrm{e}}
\newcommand{\rd}{\mathrm{d}}
\newcommand{\ri}{\mathrm{i}}
\begin{document}
\title{Boundary of the Range of a random walk and the F\"olner property}

\author{George Deligiannidis}
\address{Department of Statistics, University of Oxford, Oxford OX1 3LB, UK \newline
and The Alan Turing Institute, 96 Euston Road, London NW1 2DB, UK}
\email{deligian@stats.ox.ac.uk}

\author{S\'ebastien Gou\"ezel}
\address{Laboratoire Jean Leray, CNRS UMR 6629,
Universit\'e de Nantes, 2 rue de la Houssini\`ere, 44322 Nantes, France}
\email{sebastien.gouezel@univ-nantes.fr}

\author{Zemer Kosloff}
\thanks{The research of Z.K. was partially supported by ISF grant No. 1570/17}
\address{Einstein Institute of Mathematics,
Hebrew University of Jerusalem, Edmond J. Safra Campus, Jerusalem 91904,
Israel}
\email{zemer.kosloff@mail.huji.ac.il}

\begin{abstract}
The range process $R_n$ of a random walk is the collection of sites visited
by the random walk up to time $n$. In this work we deal with the question
of whether the range process of a random walk or the range process of a
cocycle over an ergodic transformation is almost surely a F\"olner sequence
and show the following results:
(a) The size of the inner boundary $|\partial R_n|$ of the range of
recurrent aperiodic random walks on $\mathbb{Z}^2$ with finite variance and
aperiodic random walks in $\mathbb{Z}$ in the standard domain of attraction
of the Cauchy distribution, divided by $\frac{n}{\log^2(n)}$, converges to
a constant almost surely.
(b) We establish a formula for the F\"olner asymptotic of transient cocycles
over an ergodic probability preserving transformation and use it to show
that for transient random walk on groups which are not virtually cyclic,
for almost every path, the range is not a F\"olner sequence.
(c) For aperiodic random walks in the domain of attraction of symmetric
$\alpha$- stable distributions with $1<\alpha\leq 2$,  we prove a sharp
polynomial upper bound for the decay at infinity of $|\partial R_n|/|R_n|$.
This last result shows that the range process of these random walks is
almost surely a F\"olner sequence.
\end{abstract}

\maketitle

\section{Introduction}
Let $\mathsf{G}$ be a countable group with identity element $\id_\mathsf{G}$, $\xi_1, \xi_2, \dots$ be i.i.d.\ $\mathsf{G}$-valued random variables and define the random
walk $(S_n)_n$, where $S_0:=\id_\mathsf{G}$ and  $S_n=\xi_1\cdot \xi_2\cdots \xi_n$ for $n\geq 1$. The \textit{range} of the
random walk, denoted
$$R(n):= \{ S_1, \dots, S_{n}\},$$
is the random subset of $\mathsf{G}$ which consists of the sites visited by the random walk up to time $n$. The case where $\mathsf{G}=\mathbb{Z}^d$ will serve as a motivating and recurring example in this paper; as this group is abelian we will denote the random walk in this case by $S_n=\sum_{i=1}^n\xi_i$.

The range  is a natural object to study and understanding its size and shape is of great interest for a variety of models in probability theory such as \textit{random walk in random scenery}; see for example \cite{Aa12,Deligiannidis-Kosloff} where the size of the range is shown to determine the leading term of the asymptotic growth rate of the information arising from the scenery.
The size of the range and its fluctuations have been extensively treated in the
literature starting with the seminal paper \cite{DE51} where the authors obtained strong laws in the case of the simple random walk on $\mathbb{Z}^2$, see also \cite{ET60} and \cite{Flatto}.
A central limit theorem for the range was obtained in \cite{JP72}, whereas the case of random walks with stable increments was treated in \cite{LeR}.

 More recently, the focus has shifted towards more involved objects, still related to the range. For example \cite{Benjamini-Kozma-Rosen-Yehudayoff.} studies the entropy of the range, \cite{asselah2016capacity4,asselah2016capacityd} its \textit{capacity} and finally \cite{AS17,Okada} study the boundary of the range, henceforth denoted by $\partial R_n$, that is the sites in the range with at least one neighbour not visited by the random walk. Apart from its intrinsic interest, the motivation for studying the range and its relatives often stems from their relevance in more intricate models; the capacity
of the range is of interest in \textit{random interlacements} (see \cite{Sznitman}), whereas the range itself is relevant in the study of random walks in random scenery.

The main focus of this paper is on the boundary of the range, and our interest in this particular object is motivated by its relation to the F\"olner property of the range,
which the first and last authors first studied in \cite{Deligiannidis-Kosloff}.
In the case of transient random walks on $\mathbb{Z}^d$ with $d\geq 1$, Okada in \cite{Okada} has proved a law of large numbers result for
the size of the boundary of the range. The starting point in \cite{Okada} is a result of Spitzer which uses the ergodic theorem to show that for all random walks, almost surely
\[
\lim_{n\to\infty}\frac{\left|R_n\right|}{n}=\mathbb{P}\left(S_n\neq 0, \,\text{for all }n\geq 1\right),
\]
where clearly the limit is positive for transient random walks. In Section~\ref{sec: cocycles countable groups} we generalise Okada's result to transient cocycles of ergodic probability measure preserving systems. We first show, see Proposition~\ref{prop: Kesten ext}, that Spitzer's result on the size of the range extends to this setting. Using this we are able to
show that for any transient $\mathsf{G}$-valued cocycle, almost surely
\begin{equation}\label{eq:kaimanovich}
\lim_{n\to\infty}\frac{\left|R_n\triangle R_n g\right|}{\left|R_n\right|}=c(g),
\end{equation}
where $c(g)$ is  given explicitly in terms of return probabilities, see Theorem \ref{thm: Folner for transient} for a precise statement.
The range of the random walk is almost surely a F\"olner sequence if $c(g)=0$ for all $g$.
In the last part of Section~\ref{sec: cocycles countable groups} we focus on random walks and show
that if the Green function of the random walk decays at infinity then the range process is almost surely not a F\"olner sequence.
We show in Appendix~\ref{appendix:Green} that, except for virtually cyclic groups, the Green function of
a transient random walk always tends to $0$ at infinity. Therefore, on any non virtually cyclic group,
the range of any transient random walk is almost surely not a F\"olner sequence. Furthermore, we show that the only transient random walks in $\mathbb{Z}$
satisfying \eqref{eq:kaimanovich} with $c=0$  are the obvious ones, namely those with positive mean
supported on $(-\infty, 1]$ and those with negative mean supported on $[-1, \infty)$, thus answering a question of Kaimanovich.

In the case of the simple random walk on $\mathbb{Z}^2$, the first and last
authors have shown in \cite{Deligiannidis-Kosloff} that the range is almost
surely a F\"olner sequence. As the range is a connected set, for all
$v\in\mathbb{Z}^2$, the random variable $\left|\partial R_n\right|$, the
cardinality of the boundary, can be used to bound $\left|R_n\triangle
\left(R_n+v\right)\right|$ from above. In Section \ref{sec: boundary Z^2} we
show that if $(S_n)_n$ is either a recurrent aperiodic random walk with
finite variance on $\mathbb{Z}^2$, or an aperiodic random walk on
$\mathbb{Z}$ in the standard domain of attraction of the symmetric Cauchy
distribution, there exists $c>0$ such that
\[
\lim_{n\to\infty}\frac{\left|\partial R_n\right|}{\left(n/\log^2(n)\right)}=c,\ \ \text{almost surely}.
\]
We remark that this constant $c$ coincides with that in the asymptotic of
$\rE|\partial R_n|$ and thus it follows from \cite{Okada} that for the simple
random walk on $\mathbb{Z}^2$, $c\in \left(\sfrac{\pi^2}{2},2\pi^2\right)$.
Since the range in these settings is almost surely of  order
$\sfrac{dn}{\log(n)}$ for some $d>0$,  for all $v\in \mathbb{Z}^2$ (or
$\mathbb{Z}$ in the Cauchy case), there exists a $C>0$ such that almost
surely for $n$ large enough,
\[
C^{-1}\left(\frac{1}{\log(n)}\right)\leq \frac{\left|R_n\triangle \left(R_n+v\right)\right|}{\left|R_n\right|}\leq C\left(\frac{1}{\log(n)}\right).
\]
This is a rather precise F\"olner asymptotic for the range process of these random walks.

Finally in Section~\ref{sec: Folner} we consider the range process of
aperiodic, $\mathbb{Z}$-valued random walks  in the domain of attraction of
the symmetric, $\alpha$-stable distribution with $1<\alpha<2$. In these cases
the range, scaled appropriately, converges in distribution to a nontrivial
random variable, namely the Lebesgue measure of the set $W_\alpha
\left([0,1]\right):=\left\{W_\alpha(t):\ 0\leq t\leq 1\right\}$, where
$W_\alpha$ is the symmetric, $\alpha$-stable L\'evy process. For the simple
random walk on $\mathbb{Z}$, $R_n$ is an interval, and thus there is no
scaling to ensure that $\left|\partial R_n\right|/\left|R_n\right|$ converges
almost surely to a positive constant. Theorem~\ref{thm: alpha stable} is a
quantitative upper bound of these quantities which is optimal in the
polynomial term. In particular this shows that for this class of random
walks, the range process is almost surely a F\"olner sequence. This in turn can
be used, for example, to circumvent the use of dyadic partitions in the
calculation of the relative complexity of the scenery in \cite{Aa12} as
Kieffer's Shannon-McMillan-Breiman formula applies directly to the sequence
of sets $(R_n)_n$.

\section{The Range of cocycles in countable groups}\label{sec: cocycles countable groups}
Let $T$ be an invertible, ergodic, measure preserving transformation of a probability space $\left(\Omega,\mathcal{B},\mathbb{P}\right)$ and $(\mathsf{G},\times)$ be a
countable discrete group and denote by $\mathrm{id}_\mathsf{G}$ and $m_\mathsf{G}$ the identity element and the Haar measure of $\mathsf{G}$ respectively.
A $\mathsf{G}$-valued cocycle is a function $F:\mathbb{Z}\times \Omega\to \mathsf{G}$ which satisfies the cocycle identity: For all $ m,n\in\mathbb{N}$ and $\omega\in\Omega$,
\[
F(n+m,\omega)=F(m,\omega) \times F(n,T^m \omega).
\]
Any function $f: \Omega\to \mathsf{G}$  determines a $\mathsf{G}$ valued cocycle $F$ via
\[
F(n,\omega)=\begin{cases}
f(\omega)\times f\circ T(\omega)\times \cdots \times f\circ T^{n-1}(\omega) & n\in \mathbb{N}\\
\id_{\mathsf{G}} & n=0\\
\left(f\circ T^{-1}(\omega)\right)^{-1}\times \left(f\circ T^{-2}(\omega)\right)^{-1}\times \cdots\times  \left(f\circ T^{-n}(\omega)\right)^{-1} & n\in\mathbb{Z}_-
\end{cases}
\]
These cocycles appear in the projection to the second coordinate of the skew product map $T_f:\Omega
\times \mathsf{G}\to \Omega\times \mathsf{G}$ defined by $T_f(\omega,g)=\left(T\omega,gf(\omega)\right)$. Note that $T_f$ preserves the $\sigma$ finite measure
$\mathbb{P}\times m_{\mathsf{G}}$.
The cocycle is recurrent if almost surely $N(\omega):=\# \left\{n\in\mathbb{N};\ F(n,\omega)=\id_{\mathsf{G}} \right\}=\infty$ and transient
if $N(\omega)<\infty$ almost surely.  In \cite{Schmidt}, it is shown that $F$ is recurrent if and only if $T_f$ is conservative, meaning that it satisfies Poincar\'e recurrence.
Denote by
\[
R(n)(\omega)=R_\omega(n)=\left\{F(k,\omega);\ 1\leq k\leq n\right\}
\]
the range (a.k.a. the trace) of the cocycle up to time $N$. When no confusion is possible we will write $R(n)$ for $R_\omega (n)$.
For a finite subset $A\subset \mathsf{G}$, $|A|$ is the cardinality of $A$.
\begin{proposition}\label{prop: Kesten ext}
Let $\left(\Omega,\mathcal{B},\mathbb{P}\right)$ be an ergodic probability preserving transformation, $\mathsf{G}$ a countable group and $f:\Omega\to \mathsf{G}$.
Then for $\mathbb{P}$-almost every $\omega$,
\[
\lim_{n\to\infty}\frac{|R_\omega(n)|}{n}=\mathbb{P}\left(\omega'\in\Omega;\ \forall n\in\mathbb{N},\ F(n,\omega')\neq \id_{\mathsf{G}}\right)
\]
\end{proposition}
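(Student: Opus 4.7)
The plan is to reduce the statement to an application of Birkhoff's ergodic theorem for the probability-preserving system $(T, \Omega, \mathbb{P})$. The classical Spitzer--Kesten trick for random walks counts each element of the range by its last visit time up to $n$, and this idea adapts to the cocycle setting via the cocycle identity.

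First, I write
\[
|R_\omega(n)| = \sum_{k=1}^{n} \mathbf{1}\{F(k,\omega) \neq F(j,\omega)\text{ for all } k<j\leq n\},
\]
counting each site by its last visit in $\{1,\dots,n\}$. Applying the cocycle identity $F(j,\omega) = F(k,\omega) \times F(j-k, T^k\omega)$ turns the condition $F(k,\omega) \neq F(j,\omega)$ into $F(j-k, T^k\omega) \neq \id_\mathsf{G}$. Setting
\[
A_m := \{\omega' \in \Omega : F(i,\omega') \neq \id_\mathsf{G}\text{ for all } 1 \leq i \leq m\},
\]
this yields the clean identity
\[
|R_\omega(n)| = \sum_{k=1}^n \mathbf{1}_{A_{n-k}}(T^k \omega).
\]

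The sets $A_m$ are decreasing in $m$ with intersection $A_\infty = \{\omega' : F(n,\omega') \neq \id_\mathsf{G} \text{ for all } n \in \mathbb{N}\}$, the event whose probability appears on the right-hand side of the proposition, and by monotone convergence $\mathbb{P}(A_m) \downarrow \mathbb{P}(A_\infty)$. For the lower bound, the inclusion $A_\infty \subseteq A_{n-k}$ gives
\[
\frac{|R_\omega(n)|}{n} \geq \frac{1}{n}\sum_{k=1}^n \mathbf{1}_{A_\infty}(T^k\omega),
\]
and the right-hand side tends to $\mathbb{P}(A_\infty)$ almost surely by Birkhoff's ergodic theorem, using ergodicity of $T$. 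For the upper bound, I fix $M \geq 1$ and use $A_{n-k} \subseteq A_M$ whenever $n - k \geq M$, so that
\[
|R_\omega(n)| \leq \sum_{k=1}^{n-M} \mathbf{1}_{A_M}(T^k\omega) + M.
\]
Dividing by $n$ and applying Birkhoff once more gives $\limsup_n |R_\omega(n)|/n \leq \mathbb{P}(A_M)$ almost surely, and letting $M \to \infty$ produces the matching bound $\mathbb{P}(A_\infty)$.

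The only real obstacle is that the integrand $\mathbf{1}_{A_{n-k}}$ depends on both $n$ and $k$, which prevents a direct application of Birkhoff's theorem. The key device is the monotonicity of $(A_m)$, which permits a sandwich between the time-independent sets $A_\infty$ and $A_M$; everything else is bookkeeping with the cocycle identity. This also clarifies why the same statement is expected to fail for "first-visit" counting, where the event would depend on the whole past up to time $k$ and could not be written in the form $\mathbf{1}_B(T^k\omega)$ for a fixed set $B$.
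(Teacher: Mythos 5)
Your proof is correct and follows essentially the same route as the paper: counting range points by their last visit time, rewriting via the cocycle identity as $\sum_{k=1}^n \mathbf{1}_{A_{n-k}}\circ T^k$, sandwiching between $A_\infty$ and a fixed $A_M$, and applying Birkhoff's ergodic theorem together with $\mathbb{P}(A_M)\downarrow\mathbb{P}(A_\infty)$. The only quibble is your closing aside about first-visit counting, which is not needed and is in fact questionable (a similar argument with the backward cocycle handles that case), but it does not affect the proof.
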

\begin{proof}
Define
\[
A_n=\left\{\omega'\in\Omega: \forall k\in [1,n]\cap\mathbb{N},\ F(k,\omega')\neq \id_{\mathsf{G}}   \right\}
\]
and $A=A_\infty$.  Counting each $z\in R(n)$ according to the last time it has been visited in $[1,n]\cap\mathbb{N}$, it follows that
\[
|R_{\omega}(n)|=\sum_{k=1}^{n}\mathbf{1}_{\left[\forall k<j\leq n, F(j,\omega)\neq F(k,\omega) \right]}=
\sum_{k=1}^{n}\mathbf{1}_{\left[\forall 0<j\leq n-k, F(j,T^k \omega)\neq \id_{\mathsf{G}} \right]}
\]
and for all $n\in\mathbb{N}$ and $N<n$
\[
\sum_{k=1}^n\mathbf{1}_A\circ T^k\leq |R(n)|=\sum_{k=1}^{n}\mathbf{1}_{A_{n-k}}\circ T^k\leq N+\sum_{k=1}^{n-N}\mathbf{1}_{A_N}\circ T^k.
\]
By the pointwise ergodic theorem, dividing all sides of the inequality by $n$, one has that for all $N\in\mathbb{N}$, for almost every $\omega\in\Omega$,
\[
\mathbb{P}(A)\leq \varliminf_{n\to\infty} \frac{|R_\omega(n)|}{n}\leq \varlimsup_{n\to\infty}\frac{|R_\omega(n)|}{n}\leq \mathbb{P}\left(A_N\right).
\]
Noting that $A_N\downarrow A$ as $N\to\infty$ and thus $\mathbb{P}\left(A_N\right)\xrightarrow[N\to\infty]{}\mathbb{P}(A)$ the conclusion follows.
\end{proof}
The former proposition was proved in \cite{Spitzer} for random walks in $\mathbb{Z}^d$. That is the case where $T:(\mathbb{Z}^d)^\mathbb{Z}\to (\mathbb{Z}^d)^\mathbb{Z}$
is the Bernoulli shift, $\mathbb{P}$ is a product measure (distribution of an i.i.d. sequence) and $f(\omega)=\omega_0$, where $\omega=(\cdots, \omega_{-1},\omega_0,\omega_1,\cdots)$.
\begin{corollary}\label{cor: Kesten 2}
Let $\left(\Omega,\mathcal{B},\mathbb{P}\right)$ be an ergodic probability preserving transformation, $\mathsf{G}$ a countable group and $f:\Omega\to \mathsf{G}$.	
If $F$ is recurrent then $|R(n)|=o(n)$ almost surely. In the transient case there exists $c>0$ such that $|R(n)|\sim cn$ almost surely.
\end{corollary}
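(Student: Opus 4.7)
The plan is to read off both statements directly from Proposition \ref{prop: Kesten ext}, which has already identified the almost sure limit $\lim_n |R_\omega(n)|/n = \mathbb{P}(A)$ with $A = \{\omega' : F(n,\omega') \neq \id_{\mathsf{G}}\text{ for all } n \geq 1\}$. The corollary thus reduces to evaluating $\mathbb{P}(A)$ in the two regimes, and the constant $c$ in the transient case is forced to be exactly this probability.

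For the recurrent case I will simply invoke the definition of recurrence: $N(\omega) = \infty$ almost surely, so $\mathbb{P}(A) = \mathbb{P}(N = 0) = 0$, and Proposition \ref{prop: Kesten ext} immediately yields $|R(n)| = o(n)$ almost surely.

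The transient case is the substantive one, and amounts to showing $\mathbb{P}(A) > 0$. My plan is to argue by contradiction: assume $\mathbb{P}(A) = 0$, so the first return time $\tau_1(\omega) := \min\{n \geq 1 : F(n,\omega) = \id_{\mathsf{G}}\}$ is almost surely finite, and define iteratively $\tau_{k+1} = \min\{n > \tau_k : F(n,\omega) = \id_{\mathsf{G}}\}$. The cocycle identity, applied at $m = \tau_k$ and using $F(\tau_k,\omega) = \id_{\mathsf{G}}$, shows that the event $\{\tau_k < \infty,\, \tau_{k+1} = \infty\}$ coincides with $\{\tau_k < \infty\} \cap T^{-\tau_k}(A)$. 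Slicing over the integer value of $\tau_k$ and using $T$-invariance of $\mathbb{P}$, each summand $\mathbb{P}(\tau_k = n,\, T^n\omega \in A)$ is bounded by $\mathbb{P}(T^{-n}A) = \mathbb{P}(A) = 0$, so the whole set has measure zero. An induction on $k$ then forces $\tau_k < \infty$ almost surely for every $k$, hence $N(\omega) = \infty$ almost surely, contradicting transience. Therefore $\mathbb{P}(A) > 0$, and $|R(n)| \sim cn$ with $c = \mathbb{P}(A)$ follows from Proposition \ref{prop: Kesten ext}.

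The only step that merits some care is this contradiction argument: the natural reflex would be to invoke Kac's lemma for the first return map, but this would require conservativity of the skew product $T_f$, which is exactly what fails in the transient case. The sketch above sidesteps the issue by using only the measurability of $\{\tau_k = n\}$ together with the invariance of $\mathbb{P}$ under each $T^n$, so no conservativity, Hopf decomposition, or other machinery beyond bare ergodic-theoretic bookkeeping is needed.
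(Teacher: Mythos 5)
Your proof is correct, and in the transient case it takes a genuinely different (though equally elementary) route from the paper's. The paper argues directly: by transience the last return time $l(\omega):=\sup\{n\geq 0:\ F(n,\omega)=\id_{\mathsf{G}}\}$ is almost surely finite, so $\mathbb{P}(l=N)>0$ for some $N$, and the cocycle identity at time $N$ shows that $T^{N}\{l=N\}\subset\{l=0\}=A$, whence $\mathbb{P}(A)\geq \mathbb{P}(l=N)>0$ by measure preservation (using that $T$ is invertible, as assumed in the setup). You instead prove the contrapositive: if $\mathbb{P}(A)=0$ then the iterated first-return times $\tau_1<\tau_2<\cdots$ are all almost surely finite --- the cocycle identity applied at $\tau_k$, together with $\mathbb{P}(T^{-n}A)=\mathbb{P}(A)=0$ summed over the slices $\{\tau_k=n\}$, gives the inductive step --- so $N(\omega)=\infty$ almost surely, contradicting transience. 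Both arguments rest on exactly the same two ingredients (the cocycle identity evaluated at a return time, and $T$-invariance of $\mathbb{P}$); the paper's use of the \emph{last} return time yields a two-line direct proof, while your first-return iteration is slightly longer but only ever uses preimages $T^{-n}A$, so it works verbatim for non-invertible $T$, and your remark that no conservativity of $T_f$ or Kac-type argument is needed is apt. The recurrent case and the identification $c=\mathbb{P}(A)$ via Proposition~\ref{prop: Kesten ext} are handled just as in the paper.
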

\begin{proof}
It remains to show that if $F$ is transient then $\mathbb{P}\left(\omega\in\Omega:\ \forall n\in\mathbb{N},\ F(n,\omega)\neq \id_{\mathsf{G}}\right)=:c>0$. To see this let $l:\Omega \to \mathbb{N}\cup {0}$
\[
l(\omega):=\sup\left\{n\in\mathbb{N}\cup\{0\};\ F(n,\omega)=\id_{\mathsf{G}} \right\}.
\]
We need to show that $\mathbb{P}(l=0)>0$. Since $F$ is transient there exists $N\in\mathbb{N}\cup\{0\}$ such that $\mathbb{P}(B)>0$ where
$B=\left\{\omega\in\Omega;\ l=N\right\}$. For all $\omega\in B$ and $j\in\mathbb{N}$,
\[
\id_{\mathsf{G}}\neq F(N+j,\omega)=F(N,\omega) \times F\left(j,T^N \omega\right) =F\left(j,T^N \omega\right).
\]
The latter implies that $T^{N}B\subset \left\{\omega\in\Omega; l=0\right\}$ and since $T$ preserves $\mathbb{P}$ we have $\mathbb{P}(l=0)>0$.
\end{proof}
\subsection{The asymptotic size of the boundary of transient cocycles}
In \cite{Benjamini-Kozma-Rosen-Yehudayoff.}, \cite{Okada}, the asymptotic size of the boundary of a random walk in $\mathbb{Z}^d$, was studied. More specifically let
\[
\partial R(n)=\left\{x\in R(n): \exists y\in\mathbb{Z}^d\setminus R(n),\ |x-y|=1\right\}
\]
and for $v\in\mathbb{Z}^d$,
\[
\partial_v R(n)= R(n)\setminus \left(R(n)+v\right).
\]
One of the main results of \cite{Okada} is that if $X_1,X_2,\cdots,$ is a sequence of i.i.d.\ symmetric random variables in $\mathbb{Z}^d$, $d\geq 3$, then almost surely, writing $S_k:=\sum_{j=1}^kX_j$ and  $R(n)=\left\{S_k;\ 1\leq k\leq n\right\}$,
\[
\lim_{n\to\infty}\frac{\left|\partial{R(n)}\right|}{n}=
\mathbb{P}\Big(\Big\{\forall k\in \mathbb{N}, S_k\neq 0\Big\}\cap\Big\{ \exists y\in
\left\{\pm e_i;\ 1\leq i\leq d\right\}, \forall k\in\mathbb{Z},\ S_k\neq y\Big\} \Big),
\]
where we write $e_1, \dots, e_d$ for the usual basis vectors of $\mathbb{Z}^d$.
The following proposition is a cocycle version of this result whose proof is almost identical to \cite{Okada}.
\begin{proposition}
\label{prop:range_of_Birkhoff} Let
$\left(\Omega,\mathcal{B},\mathbb{P}\right)$ be an ergodic probability
preserving transformation, $\mathsf{G}$ a countable group and $f:\Omega\to
\mathsf{G}$. Then for all $g\in \mathsf{G}$,  for $\mathbb{P}$-almost every
$\omega\in\Omega$,
\[
	\lim_{n\to\infty}\frac{\left|R_\omega(n)\setminus R_\omega(n)g\right|}{n}=\mathbb{P}\left(\omega\in\Omega;\ \forall n\in\mathbb{N},\ F(n,\omega)\neq \id_{\mathsf{G}},
	\forall n\in\mathbb{Z},\ F(n,\omega)\neq g^{-1}\right)
\]
\end{proposition}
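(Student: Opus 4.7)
The plan is to adapt the counting argument from the proof of Proposition~\ref{prop: Kesten ext} to accommodate the extra constraint $z \notin R(n)g$. First I would record that $z\in R_\omega(n)\setminus R_\omega(n)g$ iff $z$ is visited in $[1,n]$ by the cocycle but $zg^{-1}$ is not. Counting each such $z$ by the last index $k\in[1,n]$ with $F(k,\omega)=z$, and using the cocycle identity $F(j,\omega)=F(k,\omega)F(j-k,T^k\omega)$, I would rewrite
\[
|R_\omega(n)\setminus R_\omega(n)g|=\sum_{k=1}^n \mathbf{1}_{A_{n-k,k-1}}\circ T^k(\omega),
\]
where
\[
A_{N,M}=\bigl\{\omega:\ F(i,\omega)\neq \id_{\mathsf{G}}\ \forall 1\leq i\leq N,\ \ F(i,\omega)\neq g^{-1}\ \forall -M\leq i\leq N\bigr\}.
\]

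Next I would exploit that $A_{N,M}$ is decreasing in $(N,M)$ with limit
\[
A=\bigl\{\omega:\ \forall n\in\mathbb{N},\ F(n,\omega)\neq \id_{\mathsf{G}},\ \forall n\in\mathbb{Z},\ F(n,\omega)\neq g^{-1}\bigr\}.
\]
This yields the two-sided sandwich: for any fixed $N,M\in\mathbb{N}$ and all $n$ large,
\[
\sum_{k=1}^n \mathbf{1}_A\circ T^k \ \leq\ |R_\omega(n)\setminus R_\omega(n)g|\ \leq\ (N+M)+\sum_{k=M+1}^{n-N}\mathbf{1}_{A_{N,M}}\circ T^k,
\]
the boundary term $N+M$ accounting for the indices $k\leq M$ or $k>n-N$ where $A_{n-k,k-1}$ is not yet contained in $A_{N,M}$.

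Then I would divide by $n$ and apply Birkhoff's pointwise ergodic theorem to both the lower and upper sum, obtaining almost surely
\[
\mathbb{P}(A)\ \leq\ \varliminf_{n\to\infty}\frac{|R_\omega(n)\setminus R_\omega(n)g|}{n}\ \leq\ \varlimsup_{n\to\infty}\frac{|R_\omega(n)\setminus R_\omega(n)g|}{n}\ \leq\ \mathbb{P}(A_{N,M}),
\]
and conclude by letting $N,M\to\infty$ using $A_{N,M}\downarrow A$ and monotone convergence of measure, which gives $\mathbb{P}(A_{N,M})\to \mathbb{P}(A)$.

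The only genuine subtlety compared to Proposition~\ref{prop: Kesten ext} is that the defining event for a "good" last visit looks not only $n-k$ steps forward (to ensure $z$ is not revisited) but also $k$ steps backward (to rule out that $zg^{-1}$ was ever visited); this forces the truncation to be two-sided, and hence the need to restrict the sum in the upper bound to $M< k\leq n-N$ and absorb the $N+M$ remaining terms as an $O(1)$ error. Once this bookkeeping is correctly set up, the ergodic theorem and monotone convergence close the argument exactly as in Proposition~\ref{prop: Kesten ext}.
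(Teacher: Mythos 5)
Your argument is correct and matches the paper's proof essentially step for step: counting points of $R(n)\setminus R(n)g$ by their last visit time, using the cocycle identity to shift the constraints to $T^k\omega$, sandwiching between the limiting event and a two-sidedly truncated event, and concluding via Birkhoff's theorem and $A_{N,M}\downarrow A$. The only cosmetic difference is that you use two truncation parameters $N,M$ (and the exact index range $[-(k-1),n-k]$) where the paper takes a single $N$ with error term $2N$.
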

\begin{proof}
Let $g\in \mathsf{G}$ and $\omega\in \Omega$. By definition, $z\in R_\omega(n)\setminus R_\omega(n) g$ if and only if there exists
$1\leq k\leq n$ such that  for all $1\leq j\leq n$, $$F(j,\omega) g\neq z=F(k,\omega).$$
By considering the maximal $k\leq n$ for which $F(k,\omega)=z$, it holds that $\left|R(n)\setminus R(n) g\right|=\sum_{k=1}^n \mathbf{1}_{B_n(k)}$ where
\[
B_n(k)=\left\{\omega\in\Omega; \forall j\in[1,n]\cap\mathbb{N}, F(j,\omega)\neq F(k,\omega)g^{-1},\ \forall k<j\leq n,\ F(j,\omega)\neq F(k,\omega) \right\}
\]
As for all $k,j\in \mathbb{Z}$ , $F(k,\omega)^{-1} F(j,\omega)=F(j-k,T^k\omega)$, for all $k\in\left\{1,\ldots,n\right\}$,
\begin{align*}
B_n(k):=T^{-k}\Bigl\{\omega\in \Omega:\ & \forall j\in[-k,n-k]\cap \mathbb{Z},\ F(j,\omega)\neq g^{-1},
\\ & \forall j\in[1,n-k]\cap \mathbb{N},\ F(j,\omega)\neq \id_{\mathsf{G}}\Bigr\}
\end{align*}
This implies that for all $N<n/2$,
\[
\sum_{k=1}^n 1_C\circ T^k \leq \left|R(n)\setminus R(n)g\right|\leq 2N+\sum_{k=N+1}^{n-N}1_{C_N}\circ T^k,
\]
where
\[
C_N:=\left\{\omega\in \Omega;\ \forall j\in[-N,N]\cap \mathbb{Z},\ F(j,\omega)\neq g^{-1},\ \forall j\in[1,N]\cap \mathbb{N},\ F(j,\omega)\neq \id_{\mathsf{G}}\right\}
\]
and $C=C_\infty=\cap_N C_N$. The conclusion follows from the ergodic theorem as in the proof of Proposition \ref{prop: Kesten ext}.
\end{proof}
\begin{corollary}\label{cor: boundary transient}
Let $\left(\Omega,\mathcal{B},\mathbb{P}\right)$ be an ergodic probability preserving transformation, $\mathsf{G}$ a countable group and $f:\Omega\to \mathsf{G}$.	
If $F$ is transient then for all $g\in \mathsf{G}$,
\[
\frac{\left|R(n)\setminus R(n)g\right|}{|R(n)|}\xrightarrow[n\to\infty]{a.s.}\mathbb{P}\left(\left.\forall k\in\mathbb{Z},\ F(k,\cdot)\neq g^{-1}\right| \forall k\in\mathbb{N},\ F(k,\cdot)\neq \id_{\mathsf{G}}\right).
\]
\end{corollary}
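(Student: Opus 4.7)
The plan is to derive this corollary by dividing two almost sure limits already obtained in the preceding results. Concretely, I would write
\[
\frac{|R(n) \setminus R(n)g|}{|R(n)|} = \frac{|R(n) \setminus R(n)g|/n}{|R(n)|/n}
\]
and handle the numerator and denominator separately.

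First, I would apply Proposition \ref{prop:range_of_Birkhoff} to the numerator, which gives, almost surely,
\[
\frac{|R(n) \setminus R(n)g|}{n} \longrightarrow p(g) := \mathbb{P}\left(\forall n \in \mathbb{N},\ F(n,\cdot) \neq \id_{\mathsf{G}};\ \forall n \in \mathbb{Z},\ F(n,\cdot) \neq g^{-1}\right).
\]
Next, I would invoke Proposition \ref{prop: Kesten ext} for the denominator to obtain
\[
\frac{|R(n)|}{n} \longrightarrow p := \mathbb{P}\left(\forall n \in \mathbb{N},\ F(n,\cdot) \neq \id_{\mathsf{G}}\right) \qquad \text{a.s.}
\]
Intersecting the two full-measure sets on which these limits hold yields a single full-measure set on which both convergences occur simultaneously, so on that set the quotient converges to $p(g)/p$, provided $p \neq 0$.

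The only genuinely nontrivial step is ensuring that the denominator of the limiting ratio does not vanish, and this is where the transience hypothesis enters. By Corollary \ref{cor: Kesten 2}, transience of $F$ is exactly what guarantees $p > 0$; without it the quotient would be of the indeterminate form $0/0$ in the limit. This is really the only obstacle, and it has already been dealt with upstream. Finally, since the event defining $p(g)$ is contained in the event defining $p$, the deterministic ratio $p(g)/p$ is precisely the conditional probability
\[
\mathbb{P}\left(\forall k \in \mathbb{Z},\ F(k,\cdot) \neq g^{-1} \,\bigm|\, \forall k \in \mathbb{N},\ F(k,\cdot) \neq \id_{\mathsf{G}}\right),
\]
which is the claimed limit.
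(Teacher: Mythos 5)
Your proposal is correct and is essentially the paper's own argument: the paper derives the corollary by combining Proposition~\ref{prop:range_of_Birkhoff}, Proposition~\ref{prop: Kesten ext}, and Corollary~\ref{cor: Kesten 2}, exactly as you do, with transience supplying positivity of the limiting denominator. Your added remark that the numerator's event is the intersection with the conditioning event, so the ratio is literally the stated conditional probability, is the same (implicit) observation the paper relies on.
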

\begin{proof}
Follows from a combination of the last Proposition, Proposition~\ref{prop: Kesten ext} and Corollary~\ref{cor: Kesten 2}.
\end{proof}
A sequence $\left\{F_n\right\}_{n\in\mathbb{N}}$ of subsets of $\mathsf{G}$ is a \textit{right F\"olner sequence} in $\mathsf{G}$ if for all $n\in\mathbb{N}$, $F_n$ is a finite set and for all $g\in \mathsf{G}$,
\[
\frac{\left|F(n)\triangle F(n)g\right|}{|F(n)|}\xrightarrow[n\to\infty]{}0.
\]
The existence of F\"olner sequences is equivalent to amenability of the group (existence of a right invariant mean on $\mathsf{G}$).
In \cite{Deligiannidis-Kosloff}, it was shown that the range of the symmetric random walk in $\mathbb{Z}^2$ is almost surely a F\"olner sequence.
Kaimanovich, in a private communication, suggested to check which cocycles have almost surely F\"olner ranges. The following is a partial advance on this question for transient cocycles.

\begin{theorem}\label{thm: Folner for transient}
Let $\left(\Omega,\mathcal{B},\mathbb{P}\right)$ be an ergodic probability preserving transformation, $\mathsf{G}$ a countable group and $f:\Omega\to \mathsf{G}$.	If $F$ is transient then for all $g\in \mathsf{G}$, almost surely,
\[
\lim_{n\to\infty}\frac{\left|R(n)\triangle R(n)g\right|}{|R(n)|}=\Phi(g)+\Phi\left(g^{-1}\right)
\]
where $\Phi(g):=\mathbb{P}\left(\left.\forall n\in\mathbb{Z},\ F(n,\cdot)\neq g\right| \forall k\in\mathbb{N},\ F(k,\cdot)\neq \id_{\mathsf{G}}\right)$.
\end{theorem}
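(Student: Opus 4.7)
The plan is to reduce the theorem directly to Corollary~\ref{cor: boundary transient} by decomposing the symmetric difference into two pieces and then using the group structure to massage the second piece into the form handled by the corollary.

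First I would write the symmetric difference as the disjoint union
\[
R(n)\triangle R(n)g = \bigl(R(n)\setminus R(n)g\bigr)\,\sqcup\,\bigl(R(n)g\setminus R(n)\bigr),
\]
so that $\left|R(n)\triangle R(n)g\right| = \left|R(n)\setminus R(n)g\right| + \left|R(n)g\setminus R(n)\right|$. The first summand is already covered by Corollary~\ref{cor: boundary transient}, which, after unpacking the notation, gives
\[
\frac{\left|R(n)\setminus R(n)g\right|}{|R(n)|}\xrightarrow[n\to\infty]{\text{a.s.}}\Phi(g^{-1}).
\]

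Next I would handle the second summand by the elementary observation that right-multiplication by $g^{-1}$ is a bijection of $\mathsf{G}$. Applying it to $R(n)g\setminus R(n)$, an element $yg$ of the former corresponds to $y\in R(n)$ with $yg\notin R(n)$, i.e.\ $y\in R(n)\setminus R(n)g^{-1}$. Hence
\[
\left|R(n)g\setminus R(n)\right| = \left|R(n)\setminus R(n)g^{-1}\right|,
\]
and a second application of Corollary~\ref{cor: boundary transient} (with $g$ replaced by $g^{-1}$) yields
\[
\frac{\left|R(n)g\setminus R(n)\right|}{|R(n)|}\xrightarrow[n\to\infty]{\text{a.s.}}\Phi(g).
\]
Summing the two limits on a common almost-sure set gives the desired equality. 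Note that we may divide by $|R(n)|$ in the first place because Corollary~\ref{cor: Kesten 2} guarantees $|R(n)|\sim cn$ almost surely with $c>0$ in the transient case, so the conditional probabilities defining $\Phi$ are well-defined.

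There is essentially no genuine obstacle: the entire content lives in Proposition~\ref{prop:range_of_Birkhoff} and its corollary, and the only new ingredient is the (purely combinatorial) identity $|R(n)g\setminus R(n)|=|R(n)\setminus R(n)g^{-1}|$ which reflects the right-translation invariance of counting measure on $\mathsf{G}$. The symmetric appearance of $\Phi(g)+\Phi(g^{-1})$ in the statement is exactly a record of this left/right asymmetry in the one-sided boundary limits.
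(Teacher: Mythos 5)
Your proof is correct and follows essentially the same route as the paper: split $R(n)\triangle R(n)g$ into the disjoint pieces $R(n)\setminus R(n)g$ and $R(n)g\setminus R(n)$, use the right-translation identity $|R(n)g\setminus R(n)|=|R(n)\setminus R(n)g^{-1}|$, and invoke Corollary~\ref{cor: boundary transient} once with $g$ and once with $g^{-1}$. The paper's own proof is exactly this, stated more tersely.
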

\begin{proof}
It is easy to see, by multiplying by $g^{-1}$, that for all $n\in\mathbb{N}$,
\[
\left|R(n)g\setminus R(n)\right|=\left|R(n)\setminus R(n)g^{-1}\right|.
\]
Since $\left(R(n)\triangle R(n)g\right)=\left(R(n)\setminus R(n)g\right)\uplus\left(R(n)g\setminus R(n)\right)$, the result follows from Corollary \ref{cor: boundary transient}.
\end{proof}
\subsection{F\"olner property and transient random walks}
\label{subsec:transient_RW}
We now turn to an application of Theorem \ref{thm: Folner for transient} in the context of random walks. \\
Let $\mathsf{G}$ be a countable group. Given $p$ a probability measure on $\mathsf{G}$ and $\xi_1,\xi_2,...$ an i.i.d.\ sequence with marginals distributed like $p$,
let $S_n=\xi_1\xi_2\cdots \xi_n$ be the corresponding random walk and $R(n):=\left\{S_1,...,S_n\right\}$ be its range process.
The ergodic theoretic model of the random walk is the following skew product transformation.
Let $\Omega:=\mathsf{G}^\mathbb{Z}$, $\mathbb{P}=p^{\otimes \mathsf{G}}$ the product measure on $\Omega$
with marginals distributed like $p$ and $T:\Omega\to\Omega$ the full shift defined by $(T\omega)_n=\omega_{n+1}$.
Writing $m_{\mathsf{G}}$ for the Haar measure of $\mathsf{G}$, and $f:\Omega\to \mathsf{G}$, $f(\omega):=\omega(0)$, the skew product transformation
$T_f:\Omega\times \mathsf{G}\to \Omega\times \mathsf{G}$ satisfies
\[
\pi_{\mathsf{G}}(T_f^n)\overset{d}{=}S_n,
\] here $\pi_{\mathsf{G}}(\omega,h)=h$ is the projection to the $\mathsf{G}$ coordinate. The advantage of working with the skew product is that the cocycle identity indicates what is the
relevant random walk in inverse time.  In this case, write for $n\in\mathbb{Z}$,
\[
S_n^{(-)}=S_n^{(-)}(\omega):=F(-n,\omega)=\omega(-1)^{-1}\omega(-2) ^{-1}\cdots \omega(-n)^{-1}.
\] Corollary \ref{cor: boundary transient} gives the following extension of Okada's result.
\begin{corollary}\label{cor: Okada :(}
	Let $\mathsf{G}$ be a discrete countable  group and $p$ a probability measure on $\mathsf{G}$ and $\xi_1,\xi_2,...$ an i.i.d.\ sequence with
	marginal $p$. Then almost everywhere
\begin{align*}
	&\lim_{n\to\infty}\frac{ \left|R(n)\triangle \left(R(n)\cdot g\right)\right|}{|R(n)|}\\
	&= \mathbb{P}\left(\forall n\in\mathbb{N},\ S_n^{(-)}\neq g \right)\mathbb{P}\left(\left. \forall n\in\mathbb{N}, S_n\neq g\right| \forall n\in\mathbb{N}, \ S_n\neq \id_\mathsf{G} \right)\\
	&\ \ \ + \mathbb{P}\left(\forall n\in\mathbb{N},\ S_n^{(-)}\neq g^{-1}\right)\mathbb{P}\left(\left. \forall n\in\mathbb{N}, S_n\neq g^{-1}\right| \forall n\in\mathbb{N}, \ S_n\neq \id_\mathsf{G} \right).
\end{align*}
\end{corollary}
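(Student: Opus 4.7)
The plan is to apply Theorem \ref{thm: Folner for transient} to the skew-product model of the random walk, and then exploit the independence of the coordinates under the product measure $\mathbb{P}=p^{\otimes \mathbb{Z}}$ to factor $\Phi(g)+\Phi(g^{-1})$ into the asserted form.

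First, I would translate the cocycle events into forward/backward random walk events. Writing $f(\omega)=\omega(0)$, the cocycle identity gives $F(n,\omega)=\omega(0)\omega(1)\cdots\omega(n-1)=S_n$ for $n\ge 1$ and $F(-m,\omega)=\omega(-1)^{-1}\cdots\omega(-m)^{-1}=S_m^{(-)}$ for $m\ge 1$, while $F(0,\omega)=\id_\mathsf{G}$. The case $g=\id_\mathsf{G}$ is trivial (both sides are $0$), so I would henceforth assume $g\ne \id_\mathsf{G}$. In that case $F(0,\omega)\ne g$ automatically, so
\[
\{\forall n\in\mathbb{Z},\ F(n,\cdot)\neq g\}=\{\forall n\in\mathbb{N},\ S_n\ne g\}\cap\{\forall m\in\mathbb{N},\ S_m^{(-)}\ne g\},
\]
and the conditioning event $\{\forall k\in\mathbb{N},\ F(k,\cdot)\neq \id_\mathsf{G}\}$ is simply $\{\forall k\in\mathbb{N},\ S_k\neq \id_\mathsf{G}\}$.

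Next, I would use the product structure of $\mathbb{P}$: the forward event $\{\forall n,\ S_n\ne g\}$ and the conditioning event $\{\forall k,\ S_k\ne\id_\mathsf{G}\}$ are both measurable with respect to $\sigma(\omega(k):k\ge 0)$, whereas the backward event $\{\forall m,\ S_m^{(-)}\ne g\}$ is measurable with respect to $\sigma(\omega(k):k<0)$. These two $\sigma$-algebras are independent under $\mathbb{P}$, so the definition of $\Phi$ in Theorem \ref{thm: Folner for transient} factors as
\[
\Phi(g)=\mathbb{P}(\forall m\in\mathbb{N},\ S_m^{(-)}\ne g)\cdot\mathbb{P}(\forall n\in\mathbb{N},\ S_n\ne g\mid \forall k\in\mathbb{N},\ S_k\ne\id_\mathsf{G}),
\]
and an identical computation applies to $\Phi(g^{-1})$. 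Substituting these two factorisations into the conclusion $\Phi(g)+\Phi(g^{-1})$ of Theorem \ref{thm: Folner for transient} yields the stated formula.

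There is essentially no obstacle: the only thing to check is that the conditional probabilities are well defined, i.e.\ that $\mathbb{P}(\forall k\in\mathbb{N},\ S_k\ne\id_\mathsf{G})>0$, which is exactly the content of Corollary \ref{cor: Kesten 2} in the transient case. Beyond that, the argument is a bookkeeping translation of two-sided cocycle events into independent forward and backward random walks.
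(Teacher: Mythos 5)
Your proof is correct and follows essentially the same route as the paper, which deduces the corollary from Corollary~\ref{cor: boundary transient} (equivalently Theorem~\ref{thm: Folner for transient}) together with the independence of $\{S_n\}_{n\geq 1}$ and $\{S_n^{(-)}\}_{n\geq 1}$ under the product measure --- exactly the factorisation of $\Phi(g)$ and $\Phi(g^{-1})$ that you spell out. The only minor quibble is your aside that for $g=\id_{\mathsf{G}}$ \emph{both} sides vanish: the right-hand side as literally written does not (its conditional factors equal $1$ there), but since you, like the paper, only need the formula for $g\neq\id_{\mathsf{G}}$, this does not affect the argument.
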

\begin{proof}
This is a direct consequence of Corollary \ref{cor: boundary transient} and the fact that $\{S_n\}_{n=1}^\infty$ and $\{S_n^{(-)}\}_{n=1}^\infty$ are independent.
\end{proof}
\begin{remark}
	In the case where $\mathsf{G}$ is Abelian, $S_n^{(-)}\overset{d}{=}\left(S_n\right)^{-1}$ and thus for all $g\in \mathsf{G}$,
	\begin{align*}
	\mathbb{P}\left(\forall n\in\mathbb{N},\ S_n\neq g^{-1} \right)&= \mathbb{P}\left(\forall n\in\mathbb{N},\ S_n^{(-)}\neq g \right).
	\end{align*}
	The statement of Corollary \ref{cor: Okada :(} can be simplified in this case. Note that for a general group
	$\smash{(S_n^{(-)})^{-1}}$ is a (multiplication from the) left random walk and $S_n$ is a (multiplication from the) right random walk and their distribution
	as processes may no longer coincide.
\end{remark}
Let us start with the case of $S_n$ a transient random walks on $\mathbb{Z}$.
In this setting, if $p$ has finite first moment, by Atkinson's Theorem
\cite{Atkinson} and the law of large numbers, either
$\lim_{n\to\infty}S_n=\infty$ almost surely or $\lim_{n\to\infty}S_n=-\infty$
almost surely. If $\lim_{n\to\infty} S_n=\infty$ almost surely and
\[
\mathbb{P}\left(X_1>1\right)=0,
\]
then for almost all $\omega\in\Omega$, $\left\{R(n)\right\}_{n=1}^\infty$ is
an eventually monotone sequence of growing intervals with $|R(n)|\to\infty$
as $n\to\infty$. Consequently the range process is almost surely a F\"olner
sequence. A similar statement holds for walks with $S_n\to-\infty$ as
$n\to\infty$ and $\mathbb{P}\left(X_1<-1\right)=0$.  We now show this is the
only possibility for $\mathbb{Z}$-valued random walks with almost surely
F\"olner ranges.
\begin{theorem}
Let $S_n=\sum_{k=0}^{n-1}X_k$ be a $\mathbb{Z}$-valued random walk such that
$\lim_{n\to\infty} S_n=\infty$ almost surely and
$\mathbb{P}\left(X_1>1\right)>0$, then almost surely
$\left\{R(n)\right\}_{n=1}^\infty$ is not a F\"olner sequence.
\end{theorem}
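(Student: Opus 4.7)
The plan is to invoke Corollary~\ref{cor: Okada :(} to exhibit an explicit $g \in \mathbb{Z}$ for which the asymptotic F\"olner ratio $c(g) := \lim_n |R(n) \triangle (R(n)+g)|/|R(n)|$ is strictly positive, which immediately forces $\{R(n)\}$ not to be almost surely a F\"olner sequence. Since $\mathbb{Z}$ is abelian, the remark following Corollary~\ref{cor: Okada :(} gives $S_n^{(-)} \eqw -S_n$, so the limit takes the form
\begin{align*}
c(g) = {} & \mathbb{P}(\forall n \geq 1,\, S_n \neq -g)\, \mathbb{P}(\forall n \geq 1,\, S_n \neq g \mid \forall n \geq 1,\, S_n \neq 0) \\
& {} + \mathbb{P}(\forall n \geq 1,\, S_n \neq g)\, \mathbb{P}(\forall n \geq 1,\, S_n \neq -g \mid \forall n \geq 1,\, S_n \neq 0).
\end{align*}
I will take $g = 1$ and show that the first summand is strictly positive.

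The hypothesis $\mathbb{P}(X_1 > 1) > 0$ lets me fix an integer $k \geq 2$ with $\mathbb{P}(X_1 = k) > 0$. The main technical point, which I expect to be the principal obstacle, is the estimate $\mathbb{P}(S_n \geq 0\ \forall n \geq 1) > 0$. I would prove this by a standard strict descending ladder-epoch argument: if $\tau := \inf\{n \geq 1 : S_n < 0\}$ were almost surely finite, then iterating the strong Markov property at successive hitting times of new minima would produce infinitely many strict descending ladder epochs, forcing $\liminf_n S_n = -\infty$ and contradicting $S_n \to \infty$ almost surely.

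Combining this estimate with the independence of $X_1$ and $(X_i)_{i \geq 2}$, the event $\{X_1 = k\} \cap \{\sum_{i=2}^n X_i \geq 0\ \forall n \geq 2\}$ has positive probability, and on it $S_n \geq k \geq 2$ for every $n \geq 1$, so $S_n \notin \{0, 1\}$. Hence $\mathbb{P}(\forall n \geq 1,\, S_n \notin \{0, 1\}) > 0$, and dividing by $\mathbb{P}(\forall n \geq 1,\, S_n \neq 0) > 0$ (which holds by transience, e.g.\ via Corollary~\ref{cor: Kesten 2}) yields the conditional $\mathbb{P}(\forall n,\, S_n \neq 1 \mid \forall n,\, S_n \neq 0) > 0$. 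For the other factor, the event $\{S_n \geq 0\ \forall n \geq 1\}$ is contained in $\{\forall n \geq 1,\, S_n \neq -1\}$, so the ladder estimate gives $\mathbb{P}(\forall n \geq 1,\, S_n \neq -1) > 0$. Both factors of the first summand of $c(1)$ are therefore strictly positive, whence $c(1) > 0$ and the range is almost surely not a F\"olner sequence.
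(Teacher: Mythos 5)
Your proposal is correct and follows essentially the same route as the paper: invoke Corollary~\ref{cor: Okada :(} with $g=1$ and show one summand is strictly positive by combining one jump of size $k\geq 2$ (available since $\mathbb{P}(X_1>1)>0$) with the positive-probability event that the subsequent walk never goes below its starting level, so that the walk avoids $\{0,1\}$ forever, and bounding $\mathbb{P}(\forall n,\ S_n\neq -1)$ below by the same staying-nonnegative probability. The only inessential difference is how the key fact $\mathbb{P}(\forall n\geq 1,\ S_n\geq 0)>0$ is obtained: you argue via strict descending ladder epochs and the strong Markov property (infinitely many ladder epochs would force $\liminf S_n=-\infty$), whereas the paper reuses the last-visit/shift argument from the proof of Corollary~\ref{cor: Kesten 2}; both arguments are valid.
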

\begin{proof}
As the random walk is transient and $S_n\to \infty$ almost surely,\footnote{See the proof of Corollary \ref{cor: Kesten 2} where it shown that $\mathbb{P}(l=0)>0$.}
\[
\mathbb{P}\left(\forall n\in\mathbb{N},\ S_n>0 \right)>0.
\]
In addition there exists $\mathbb{Z}\ni j>1$ such that $\mathbb{P}\left(X_1=j\right)>0$. Therefore,
\begin{align*}
\mathbb{P}\left(\forall n\in\mathbb{N},\ S_n>1\right)&\geq \mathbb{P}\left(X_0 =j \text{ and } \forall 2\leq n\in\mathbb{N}, S_n-X_0>0\right)\\
&=\mathbb{P}\left(X_0=j\right) \mathbb{P}\left(\forall n\in\mathbb{N},\  S_n>0\right),\ \ \text{by the Markov property of}\ S_n.
\end{align*}
It follows that $\mathbb{P}\left(\forall n\in\mathbb{N},\ S_n>1\right)>0$ and
\[
\mathbb{P}\left(\left. \forall n\in\mathbb{N}, S_n\neq 1 \right| \forall n\in\mathbb{N}, \ S_n\neq 0 \right)\geq\frac{\mathbb{P}\left(X_1=j\right) \mathbb{P}\left(\forall n\in\mathbb{N},\  S_n>0\right)}{\mathbb{P}\left(\forall n\in\mathbb{N},\  S_n\neq 0\right)}>0.
\]
As the distributions of $\left\{S_n^{(-)}\right\}_{n=1}^\infty$ and $\left\{-S_n\right\}_{n=1}^\infty$ are the same
\begin{align*}
\mathbb{P}\left(\forall n\in\mathbb{N},\ S_n^{(-)}\neq 1 \right)&=\mathbb{P}\left(\forall n\in\mathbb{N},\ S_n\neq -1 \right)\geq \mathbb{P}\left(\forall n\in\mathbb{N},\ S_n>0\right)>0.
\end{align*}
We have shown that
\[
\mathbb{P}\left(\forall n\in\mathbb{N},\ S_n^{(-)}\neq 1\right)\mathbb{P}\left(\left. \forall n\in\mathbb{N}, S_n\neq  1 \right| \forall n\in\mathbb{N}, \ S_n\neq 0 \right)>0.
\]	
By Corollary \ref{cor: Okada :(}, $\lim_{n\to\infty}\left|R(n)\triangle \left(R(n)+1\right)\right|/ \left|R(n)\right|>0$ almost surely and  $R(n)$ is almost surely not a F\"olner sequence.
\end{proof}
For other types of groups, it is most often the case that for any transient
random walk on $\mathsf{G}$, its Green function decays at infinity, see
Appendix~\ref{appendix:Green}. We next explain what we mean by this and show
that in this case the range of a random walk is almost surely not a F\"olner
sequence. This is applied in Corollary~\ref{cor:not_virtually_cyclic}  to
show that for all admissible transient random walks on non-virtually cyclic
groups, the range is almost surely not a F\"olner sequence.

The Green function of a transient random walk on $\mathsf{G}$ is the function $G:\ \mathsf{G}\to [0,\infty]$ defined by
\begin{equation}
\label{eq:define_Green}
G(g):=\mathbb{E}\left(L(g)\right)=\sum_{n=0}^\infty \mathbb{P}\left(S_n=g\right)
\end{equation}
where $L(g):=\left|\{n\in\mathbb{N} \cup \{0\},\ S_n=g\right\}|:\Omega\to\mathbb{N}\cup\{0\}$. For $g\in G$ denote by
\[
q(g):=\mathbb{P}\left(\forall n\in \mathbb{N},\ S_n\neq g\right),
\]
and write $q:=q\left(\id_\mathsf{G}\right)$.
\begin{lemma}\label{lem: Green}
If $S_n$ is a $\mathsf{G}$-valued transient random walk then for all $g\in \mathsf{G}$,
\[
G(g)=\frac{1-q(g)}{q}
\]
\end{lemma}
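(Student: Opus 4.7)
The plan is to use a first-passage decomposition of the Green function via the strong Markov property of the random walk. The intermediate target is the classical identity $\sum_{n\geq 1}\mathbb{P}(S_n=g) = (1-q(g))\,G(\mathrm{id}_\mathsf{G})$, together with the separate computation $G(\mathrm{id}_\mathsf{G})=1/q$ obtained from the geometric tail of the number of returns to $\mathrm{id}_\mathsf{G}$.

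First, I would introduce the hitting time $T_g := \inf\{n\geq 1:S_n=g\}$, so that by definition $\mathbb{P}(T_g<\infty)=1-q(g)$. Applying the strong Markov property at $T_g$ together with the i.i.d.\ structure of the increments, on $\{T_g<\infty\}$ the shifted process
\[
\bigl(S_{T_g}^{-1}S_{T_g+m}\bigr)_{m\geq 0} = \bigl(\xi_{T_g+1}\xi_{T_g+2}\cdots\xi_{T_g+m}\bigr)_{m\geq 0}
\]
is an independent copy of the walk starting at $\mathrm{id}_\mathsf{G}$, while on $\{T_g=\infty\}$ (of probability $q(g)$) the walk never visits $g$ after time $0$. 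Denoting this independent copy by $\tilde S$ and taking expectations, one obtains
\[
\sum_{n\geq 1}\mathbb{P}(S_n=g) = (1-q(g))\sum_{m\geq 0}\mathbb{P}(\tilde S_m=\mathrm{id}_\mathsf{G}) = (1-q(g))\,G(\mathrm{id}_\mathsf{G}).
\]

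Second, I would compute $G(\mathrm{id}_\mathsf{G})=1/q$ by iterating the strong Markov property at successive return times to $\mathrm{id}_\mathsf{G}$: each completed excursion independently fails to return with probability $q$, so the number of returns at times $n\geq 1$ is geometric with mean $(1-q)/q$, and adding the visit at $n=0$ yields $G(\mathrm{id}_\mathsf{G})=1/q$. Substituting into the previous display gives the stated formula. The argument is routine; there is no serious obstacle beyond applying the strong Markov property correctly at the possibly infinite stopping time $T_g$, with the understanding that all contributions vanish on $\{T_g=\infty\}$.
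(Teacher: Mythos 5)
Your proof is correct and follows essentially the same route as the paper's: a first-passage decomposition at the hitting time of $g$ combined with the geometric distribution of the number of returns to $\id_{\mathsf{G}}$, which the paper merely packages as the explicit law $\mathbb{P}(L(g)=j)=(1-q(g))(1-q)^{j-1}q$ before taking expectations. (Like the paper's argument, your concluding substitution implicitly assumes $g\neq \id_{\mathsf{G}}$ so that the $n=0$ term of $G(g)$ vanishes; at $g=\id_{\mathsf{G}}$ both computations in fact give $G(\id_{\mathsf{G}})=1/q$, which is harmless for the way the lemma is used.)
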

\begin{proof}
Let $g\in \mathsf{G}$. By the Markov property for the random walk,
\begin{align*}
\mathbb{P}(L(g) = 0) &= q(g)\\
\mathbb{P}(L(g) = j) &= (1-q(g))(1-q)^{j-1}q,\quad j\geq 1.
\end{align*}

and thus
\begin{align*}
\rE (L(g)) &= q(g)\cdot 0 + (1-q(g))  \sum_{j=1}^\infty j (1-q)^{j-1} q \\
&=	(1-q(g)) \rE L(0) = \frac{1-q(g)}{q}.
\qedhere
\end{align*}
\end{proof}
Given a random walk $S_n$ on $\mathsf{G}$, $G^{(-)}$ denotes the Green function of $S_n^{(-)}$.
\begin{theorem}\label{thm:transientgroup}
If $S_n$ is a $\mathsf{G}$ valued transient random walk and there exists $\left\{g_n\right\}_{n=1}^\infty$ such that $\lim_{n\to\infty}\max\left(G\left(g_n\right),G^{(-)}\left(g_n\right)\right)=0$, then $\left\{R(n)\right\}_{n=1}^\infty$ is almost surely not a F\"olner sequence.
\end{theorem}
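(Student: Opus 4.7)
The plan is to select $g = g_n$ with $n$ large in Corollary~\ref{cor: Okada :(} and show that the resulting almost sure limit of $|R(k)\triangle R(k)g_n|/|R(k)|$ is strictly positive. Once this is established for a single group element, we are done: being F\"olner requires the ratio to vanish at every $g\in\mathsf{G}$, so it suffices to rule out a single direction.

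The first step is to translate the hypothesis $G(g_n) \to 0$, $G^{(-)}(g_n)\to 0$ into statements about the ``one-point avoidance'' probabilities $q(g) = \mathbb{P}(\forall n \geq 1,\ S_n \neq g)$. Lemma~\ref{lem: Green} gives $q(g) = 1 - qG(g)$. Moreover, since the increments $\omega(-1)^{-1},\dots,\omega(-n)^{-1}$ have the same joint law as $\omega(0),\dots,\omega(n-1)$ after reversal and inversion, one checks that $(S_n^{(-)})^{-1}\overset{d}{=} S_n$, hence $G^{(-)}(g) = G(g^{-1})$ and therefore $q^{(-)}(g) = q(g^{-1}) = 1 - qG(g^{-1})$. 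The hypothesis $\max(G(g_n),G^{(-)}(g_n))\to 0$ thus yields
\[
q(g_n)\to 1, \qquad q(g_n^{-1})\to 1, \qquad n\to\infty.
\]

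The second step is the lower bound. Fix $n$ and apply Corollary~\ref{cor: Okada :(} with $g = g_n$. The right hand side is a sum of two non-negative terms, and it suffices to bound the first. Its prefactor equals $\mathbb{P}(\forall k,\ S_k^{(-)} \neq g_n) = q^{(-)}(g_n) = q(g_n^{-1})$, which tends to $1$. For the conditional probability factor, a Bonferroni-type inequality yields
\[
\mathbb{P}\bigl(\forall k,\ S_k\neq g_n \,\big|\, \forall k,\ S_k \neq \id_{\mathsf{G}}\bigr) \;\geq\; \frac{q(g_n) + q - 1}{q},
\]
which also tends to $1$. Consequently, the first term on the right of Corollary~\ref{cor: Okada :(} tends to $1$ as $n\to\infty$, so there exists $n_0$ for which the deterministic limit furnished by the corollary at $g = g_{n_0}$ is at least $1/2$. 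Hence almost surely $|R(k)\triangle R(k)g_{n_0}|/|R(k)| \to $ a number $\geq 1/2$, and in particular does not tend to $0$, so $\{R(k)\}_{k\ge 1}$ is almost surely not a F\"olner sequence.

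The only delicate point is the symmetry relation $G^{(-)}(g) = G(g^{-1})$, which ensures that the hypothesis on $G^{(-)}$ feeds into Okada's formula on the correct side; the rest is a direct combination of Lemma~\ref{lem: Green} with a Bonferroni bound and a choice of a single sufficiently large index $n_0$.
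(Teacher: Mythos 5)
Your argument is correct and takes essentially the same route as the paper: Lemma~\ref{lem: Green} converts the Green-function decay into $q(g_n)\to 1$ and $\mathbb{P}\left(\forall k,\ S_k^{(-)}\neq g_n\right)\to 1$, and plugging these into Corollary~\ref{cor: Okada :(} (with the same Bonferroni bound for the conditional factor) yields an almost sure limit at least $1/2$ at some fixed $g_{n_0}$, hence no F\"olner property. Your detour through $G^{(-)}(g)=G(g^{-1})$ is valid but cosmetic --- it rests on the marginal identity $(S_n^{(-)})^{-1}\overset{d}{=}S_n$ together with Lemma~\ref{lem: Green} applied to the reversed walk $S^{(-)}$ (note that marginal equality alone does not give equality of avoidance probabilities), whereas the paper simply applies Lemma~\ref{lem: Green} to $S^{(-)}$ directly.
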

\begin{proof}
It follows from Lemma \ref{lem: Green} that $G\left(g_n\right)\xrightarrow[n\to\infty]{}0$ implies that $q\left(g_n\right)\xrightarrow[n\to\infty]{}1$.
Letting
$$A_n:= \left\{\forall k\in\mathbb{N},\ S_k\neq g_n\right\}, \qquad B:= \left\{\forall k\in\mathbb{N},\ S_k\neq \id_\mathsf{G} \right\},$$
we have that $\mathbb{P}(A_n)=q(g_n)\to 1$ and $\mathbb{P}(B)=q$, whence it easily follows that
\[
\lim_{n\to\infty}\mathbb{P}\left(\left.\forall k\in\mathbb{N},\ S_k\neq g_n\right|\forall k\in\mathbb{N},\ S_k\neq \id_\mathsf{G}\right)=
\lim_{n\to\infty}\frac{\mathbb{P}\left(A_n\cap B\right)}{\mathbb{P}(B)}=1.
\]
A similar reasoning shows that $G^{(-)}\left(g_n\right)\xrightarrow[n\to\infty]{}0$ implies that
\[
\lim_{n\to\infty}\mathbb{P}\left(\forall k\in\mathbb{N},\ S_k^{(-)}\neq g_n\right)=1.
\]
By this
\[
\lim_{n\to\infty}\left[\mathbb{P}\left(\forall k\in\mathbb{N},\ S_k^{(-)}\notin g_n\right)\mathbb{P}\left(\left.\forall k\in\mathbb{N},\ S_k\ne	g_n\right|\forall k\in\mathbb{N},\ S_k\neq \id_\mathsf{G}\right)\right]=1
\]
and an application of Corollary~\ref{cor: Okada :(} shows that for all large $n\in\mathbb{N}$, almost surely
\[
\lim_{k\to\infty}\frac{\left|R(k)\triangle\left(R(k) \cdot g_n\right)\right|}{\left|R(k)\right|}>\frac{1}{2}.
\]
We conclude that almost surely the range is not a F\"olner sequence.
\end{proof}

Let us say that a random walk driven by a measure $p$ on the group
$\mathsf{G}$ is \emph{admissible} (or \emph{irreducible}) if the walk
starting from the identity can reach any point in the group, i.e., the
semigroup generated by the support of $p$ is the whole group. This is a
natural irreducibility assumption. We prove in Appendix~\ref{appendix:Green}
that, except for virtually cyclic groups, the Green function of a transient
admissible probability measure tends to $0$ at infinity.

\begin{corollary}
\label{cor:not_virtually_cyclic}
Let $S_n$ be a transient admissible random walk on a group $\mathsf{G}$ which is not virtually cyclic.
Then almost surely $\left\{R(n)\right\}_{n=1}^\infty$ is not a F\"olner sequence.
\end{corollary}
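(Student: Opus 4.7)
The plan is to reduce the statement to Theorem~\ref{thm:transientgroup} together with the appendix fact that, on a non virtually cyclic group, the Green function of any transient admissible random walk decays to $0$ at infinity.

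First I would observe that the reverse random walk $S_n^{(-)}$ is itself a transient admissible random walk on $\mathsf{G}$. Transience is immediate from the definition of $S_n^{(-)}$, which visits each element only finitely often if and only if $S_n$ does. For admissibility, the reverse walk is the random walk driven by the pushforward measure $\check{p}(g)=p(g^{-1})$, whose support is $\operatorname{supp}(p)^{-1}$; if the semigroup generated by $\operatorname{supp}(p)$ is all of $\mathsf{G}$, then writing each $h\in\mathsf{G}$ as $h=s_1\cdots s_k$ with $s_i\in\operatorname{supp}(p)$ gives $h^{-1}=s_k^{-1}\cdots s_1^{-1}$, so the semigroup generated by $\operatorname{supp}(\check p)$ also exhausts $\mathsf{G}$.

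Next I would invoke the result from Appendix~\ref{appendix:Green} for both the original walk and its reverse: since $\mathsf{G}$ is not virtually cyclic, both $G(g)\to 0$ and $G^{(-)}(g)\to 0$ as $g\to\infty$ in $\mathsf{G}$, meaning that for every $\varepsilon>0$ only finitely many $g\in\mathsf{G}$ satisfy $\max(G(g),G^{(-)}(g))\geq \varepsilon$. Because $\mathsf{G}$ is countably infinite (a finite group admits no transient random walk), any enumeration $g_1,g_2,\dots$ of $\mathsf{G}$ eventually leaves every finite set, and therefore
\[
\lim_{n\to\infty}\max\bigl(G(g_n),\,G^{(-)}(g_n)\bigr)=0.
\]

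Finally I would feed this sequence $\{g_n\}$ directly into Theorem~\ref{thm:transientgroup}, which then yields that almost surely $\{R(n)\}_{n=1}^\infty$ is not a F\"olner sequence. The only non-routine ingredient is the appendix statement about decay of the Green function, which is deferred to Appendix~\ref{appendix:Green}; everything else is a short verification that the hypotheses of Theorem~\ref{thm:transientgroup} are met, so I do not anticipate any genuine obstacle at this stage.
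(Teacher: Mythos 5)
Your proposal is correct and takes essentially the same route as the paper: the paper's proof of Corollary~\ref{cor:not_virtually_cyclic} likewise consists of combining Theorem~\ref{thm:transientgroup} with Theorem~\ref{thm:Green_vanishes} applied to both $G$ and $G^{(-)}$. The only difference is that you explicitly verify that the reversed walk is transient and admissible (so the appendix result applies to $G^{(-)}$) and that one may take for $\{g_n\}$ any enumeration of the infinite group, details the paper leaves implicit.
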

In particular, for all $d\geq 3$,   the range of the simple random walk on $\mathbb{Z}^d$ is almost surely not a F\"olner sequence.
\begin{proof}
This follows directly from the previous Theorem~\ref{thm:transientgroup} and
Theorem~\ref{thm:Green_vanishes} showing that the Green functions $G$ and
$G^{(-)}$ tend to $0$ at infinity.
\end{proof}

\section{Precise F\"olner asymptotics for recurrent planar random walks and Cauchy random walks in
\texorpdfstring{$\mathbb{Z}$}{Z}}\label{sec: boundary Z^2} Let
$\xi_0,\xi_1,\cdots$ be i.i.d.\ $\mathbb{Z}^d$-valued random variables
satisfying one of the following two assumptions
\begin{itemize}
	\item[(A1)\label{ass1}] $d=2$ and there exists a nonsingular covariance matrix $\Sigma\in M_{2\times2}\left(\mathbb{R}\right)$ such that for all $t\in [-\pi,\pi]^2$,
	\[
	\phi(t)=\mathbb{E}\left(\exp(i\left<t,\xi\right>)\right)=1-\left<\Sigma t,t\right>+o\left(|t|^2\right).
	\]
	\item[(A2)\label{ass2}] $d=1$ and there exists $\gamma>0$ such that for $t\in[-\pi,\pi]$,
	\[
	\phi(t)=\mathbb{E}\left(\exp(it\xi)\right)=1-\gamma|t|+o(|t|).
	\]
\end{itemize}
We will always assume that the random walk is \textit{aperiodic}, namely that there is no proper subgroup of $\mathbb{Z}^d$ containing the support of $\xi_1$.
By \cite[Theorem~7.1]{Spitzer} this is equivalent to requiring that $\phi(t)=1$ for $t\in [-\pi,\pi]^d$ if and only if $t=0$.

We will often need the stronger notion of \textit{strong aperiodicity}, which
is equivalent to  $|\phi(t)|< 1$ for all $t\in (-\pi,\pi)^d\setminus\{0\}$.
For aperiodic random walks, it amounts to the condition that the greatest
common divisor of return times to the origin is $1$. For instance, the simple
random walk is aperiodic, but not strongly aperiodic.

For $v\in \mathbb{Z}^d$, we define the \textit{$v$-boundary} of the range as
$$\partial_v R(n):= R(n)\setminus \big(R(n)+v\big).$$
For $d=1,2$, write $\mathds{E}_d:=\{\pm e_i: 1\leq d\}$, where $e_i$ are the
usual generators of $\mathbb{Z}^d$. The boundary is defined as
\[
\partial R(n)=\left\{x\in R(n): \exists y\in\mathbb{Z}^d\setminus R(n),\ |x-y|=1\right\},
\]
which can also be written in the form
\begin{equation}
\label{eq:partialR_union}
\partial R(n) =\bigcup_{v\in \mathds{E}_d} R(n) \setminus (R(n)+v) =\bigcup_{v\in \mathds{E}_d} \partial_v R(n).
\end{equation}
%

The following is the main result of this section.
\begin{theorem}\label{thm: boundary range}
Let $\xi_0,\xi_1,...,\xi_n,..$ be  i.i.d.\ $\mathbb{Z}^d$-valued aperiodic
random variables satisfying either Assumption~(A1) or Assumption~(A2). Then
for every $v\in\mathbb{Z}^d$, there exist constants $C_v, C>0$ such that
almost surely,
\[
	\lim_{n\to\infty} \frac{\log^2(n)}{n}\left|\partial_v R(n)\right|=C_v, \qquad
	\lim_{n\to\infty} \frac{\log^2(n)}{n}\left|\partial R(n)\right|=C.
\]
In addition $\mathbb{E}(\left|\partial_v R(n)\right|)\sim
\frac{C_vn}{\log^2(n)}$ and $\mathbb{E}(\left|\partial R(n)\right|)\sim
\frac{C n}{\log^2(n)}$ as $n\to\infty$.
\end{theorem}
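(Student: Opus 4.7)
The plan is to compute the first moment by a last-visit decomposition, then prove almost sure convergence by a second-moment argument combined with an interpolation based on the fact that the boundary size changes by at most $O(1)$ per step, and finally pass from $\partial_v R(n)$ to $\partial R(n)$ by inclusion-exclusion.

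\textbf{First moment.} Fix $v \in \mathbb{Z}^d \setminus \{0\}$. Counting each point of $\partial_v R(n)$ at its last visit, $|\partial_v R(n)| = \sum_{k=1}^n \mathbf{1}_{E_n^v(k)}$, where $E_n^v(k)$ requires that the forward partial sums $\xi_{k+1} + \cdots + \xi_j$ avoid $\{0, -v\}$ for $k < j \leq n$ and the backward partial sums $\xi_{j+1} + \cdots + \xi_k$ avoid $v$ for $1 \leq j < k$. These two sets of conditions involve disjoint blocks of the $\xi$'s, so they are independent and
\[
\mathbb{P}(E_n^v(k)) = p^v_{n-k}\, q^v_{k-1},
\]
with $p^v_m := \mathbb{P}(S_j \notin \{0, -v\},\ 1 \leq j \leq m)$ and $q^v_l := \mathbb{P}(S_j \neq v,\ 1 \leq j \leq l)$. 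Under (A1) or (A2) the walk is critically recurrent, and Kesten-type avoidance estimates derived from its potential kernel give $p^v_m \sim \kappa_v^+/\log m$ and $q^v_l \sim \kappa_v^-/\log l$. Summing over $k$ and using $\log(n-k+1)\log k \sim \log^2 n$ uniformly for $k$ in the bulk then yields $\mathbb{E}|\partial_v R(n)| \sim C_v n/\log^2 n$ with $C_v = \kappa_v^+ \kappa_v^-$, which establishes the expectation asymptotic.

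\textbf{Variance and almost sure convergence.} The main obstacle is to prove a variance bound of the form $\mathrm{Var}(|\partial_v R(n)|) = O(n^2/\log^{4+\delta} n)$ for a sufficiently large $\delta$. I would decompose $\mathrm{Cov}(\mathbf{1}_{E_n^v(k)}, \mathbf{1}_{E_n^v(l)})$ for $k < l$ along the three time intervals $[1,k]$, $(k,l]$, $(l,n]$ on which the relevant increments are mutually independent; using the strong Markov property at times $k,l$ together with the local CLT supplied by (A1) or (A2), the joint probability factorises to leading order, with the middle block producing an extra avoidance factor that accounts for the gain over $\mathbb{P}(E_n^v(k))\mathbb{P}(E_n^v(l))$ when $l-k$ is macroscopic. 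Granted the variance bound, Chebyshev gives
\[
\mathbb{P}\!\left( \bigl||\partial_v R(n)| - \mathbb{E}|\partial_v R(n)|\bigr| > \varepsilon\, n/\log^2 n \right) = O(\log^{-\delta} n),
\]
which is summable along a sufficiently sparse subsequence $(n_m)$. For interpolation, appending $\xi_{n+1}$ alters the $v$-boundary status only of $S_{n+1}$ and of $S_{n+1} \pm v$, so $\bigl||\partial_v R(n+1)| - |\partial_v R(n)|\bigr| \leq 3$; provided $(n_m)$ is chosen with $n_{m+1} - n_m = o(n_m/\log^2 n_m)$, the block fluctuation is negligible, and Borel-Cantelli along $(n_m)$ combined with the interpolation yields the almost sure limit $|\partial_v R(n)|\log^2 n/n \to C_v$. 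The trade-off between summability of the Chebyshev tails and the slow growth forced by the $\log^2 n$ denominator in the mean is what dictates how large $\delta$ has to be.

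\textbf{Full boundary.} For $\partial R(n)$, from \eqref{eq:partialR_union} and inclusion-exclusion,
\[
|\partial R(n)| = \sum_{\emptyset \neq V \subseteq \mathds{E}_d} (-1)^{|V|+1} \Bigl|\bigcap_{v \in V} \partial_v R(n)\Bigr|.
\]
Each intersection $\bigcap_{v \in V} \partial_v R(n)$ admits the same last-visit decomposition, with the forward avoidance set enlarged to $\{0\} \cup (-V)$ and the backward avoidance set to $V$. The Kesten-type asymptotics used above extend to avoiding any finite set of sites (with a different constant reflecting the potential-theoretic capacity of the set), and the variance argument carries through verbatim, producing constants $C_V$ with $|\bigcap_{v \in V} \partial_v R(n)|\log^2 n/n \to C_V$ almost surely and in expectation. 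Setting $C := \sum_V (-1)^{|V|+1} C_V$ gives the second limit together with its expectation asymptotic.
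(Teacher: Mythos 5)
Your first-moment computation and the inclusion--exclusion reduction of $\partial R(n)$ to the intersections $\bigcap_{v\in V}\partial_v R(n)$ follow the same route as the paper (last-visit decomposition, independence of the forward and backward blocks, Kesten--Spitzer avoidance asymptotics for finite sets, positivity of $C$ via $|\partial R(n)|\geq|\partial_v R(n)|$), and that part is sound.

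The gap is in the almost sure convergence step, and it is exactly the point where the paper has to work hardest. You assume a variance bound $\mathrm{Var}(|\partial_v R(n)|)=O(n^2/\log^{4+\delta}n)$ ``for a sufficiently large $\delta$'' and propose to obtain it from the three-block covariance decomposition with the strong Markov property and the local CLT. That decomposition is precisely what the paper carries out (using in addition the estimate $Q_O^n(x,y)\leq C/(n\log^2 n)$ of Lemma~\ref{lem:Q_control} to control differences such as $\theta((m-k)/2)-\theta(n-k)$), and it yields only $\mathrm{Var}(|\partial_v R(n)|)\lesssim n^2\log\log n/\log^5 n$, i.e.\ Chebyshev deviation probabilities of order $\log\log n/\log n$ --- so $\delta$ just below $1$, not large. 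Now run your own bookkeeping: since $|\partial_v R(n)|$ is not monotone, the only interpolation available is the $O(1)$-per-step bound, which forces $n_{m+1}-n_m=O(n_m/\log^2 n_m)$, hence $\log n_m\asymp m^{1/3}$ and at least $\log^3 n$ subsequence points up to $n$; summability of $\log^{-\delta}(n_m)$ then requires $\delta>3$, i.e.\ a variance bound $o(n^2/\log^{7}n)$, far beyond what the sketched second-moment argument can give. This is the trade-off you mention in your last sentence, but you do not resolve it, and with $\delta<1$ the Borel--Cantelli/interpolation scheme simply fails: tails summable only along $N_k=\exp(k^a)$, $a>1$, which is much too sparse for the crude interpolation. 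The paper's way out is Flatto's enhancement procedure (Theorem~\ref{thm: Flatto}): splitting the range into $N=\lfloor\log^{\delta/3}n\rfloor$ blocks and using independence of the blocks, property $\mathbf{A}(\delta)$ (and $\mathbf{D}(\delta)$) self-improves to $\mathbf{A}(4\delta/3)$, and iterating gives deviation probabilities $O(\epsilon^{-2}\log^{-5}n)$; only then does the subsequence $N_k=\lfloor\exp(k^{1/4})\rfloor$ satisfy both summability ($k^{-5/4}$) and $N_{k+1}-N_k\lesssim k^{-3/4}N_k=o(N_k/\log^2 N_k)$, so the interpolation closes. Without this bootstrapping idea (or a genuinely stronger variance estimate, which your argument does not supply), the almost sure statement in your proposal is not proved.
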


The statements in Theorem~\ref{thm: boundary range} for $\partial_v R(n)$ and
$\partial R(n)$ are proved in a parallel way, the proofs being slightly more
involved for $\partial R$ instead of $\partial_v R$ because of the union
in~\eqref{eq:partialR_union}. We will therefore concentrate on the case of
$\partial R$.

In \cite{Benjamini-Kozma-Rosen-Yehudayoff.} it was shown that in the case of
the symmetric random walk on $\mathbb{Z}^2$, $\rE|\partial R(n)|$ is
proportional to the entropy of the range (at time $n$) and it is of order
constant times $n/\log^2(n)$. Okada in \cite{Okada} has shown that
$2^{-1}\pi^2\leq C\leq 2\pi^2$. The previous Theorem is a law of large
numbers type result for a more general class of random walks which includes
random walks in the domain of attraction of the (symmetric) Cauchy
distribution. It also gives a more precise estimate on the almost sure F\"olner
property of the range of such random walk, see \cite{Deligiannidis-Kosloff}
for some consequences of the F\"olner property of the range in the model of
random walks in random sceneries. The proof goes by first establishing an
upper bound for the variance of $\left|\partial R(n)\right|$ which gives the
convergence in probability. After that we use a method from \cite{Flatto}
which improves the asymptotics of the rate of convergence in probability,
thus enabling us to use the Borel Cantelli lemma for showing convergence
almost surely.

\subsection{Auxiliary results}
We now state and prove a number of auxiliary results that will be used in the proof of Theorem~\ref{thm: boundary range}, that we could not find in the literature.
\begin{proposition}\label{prop: return tail}
	Under the assumptions of Theorem~\ref{thm: boundary range}, for all
	 $j\in\mathbb{Z}^d\setminus\{0\}$ there exist $c_j,d_j>0$ such that
	\[
	\mathbb{P}\left(\forall 1\leq k\leq n,\ S_k\notin \{0,j\}\right)=\frac{c_j}{\log(n)}+o\left(\log(n)^{-1}\right)\ \text{as}\ n\to\infty.
	\]
	and
	\[
	\mathbb{P}\left(\forall 1\leq k\leq n,\ S_k\neq j\right)=\frac{d_j}{\log(n)}+o\left(\log(n)^{-1}\right)\ \text{as}\ n\to\infty.
	\]
\end{proposition}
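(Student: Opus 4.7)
The approach I propose is classical: Fourier analysis coupled with Karamata's Tauberian theorem applied to suitable generating functions. The two preliminary facts I need, both of which follow from a local central limit theorem under (A1) or (A2), are (i) $u_n := \mathbb{P}(S_n = 0) \sim c/n$, with $c = 1/(2\pi\sqrt{\det\Sigma})$ under (A1) and $c = 1/(\pi\gamma)$ under (A2); and (ii) the finiteness of the potential kernel
\[
a(y) := \sum_{n=0}^\infty\bigl(u_n - \mathbb{P}(S_n = y)\bigr), \qquad y \in \mathbb{Z}^d,
\]
which I will obtain from the Fourier representation $a(y) = (2\pi)^{-d}\int_{[-\pi,\pi]^d}(1-e^{-i\langle y,t\rangle})/(1-\phi(t))\,dt$. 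Integrability at $t = 0$ follows from $1-\phi(t)\asymp |t|^2$ in case (A1) and $1-\phi(t)\asymp |t|$ in case (A2), combined with $1 - e^{-i\langle y,t\rangle} = O(|t|)$ (and, in case (A1), the cancellation of the odd imaginary part $-i\langle y,t\rangle$ by symmetry of the domain). By Abel's theorem, (i) yields $U(s) := \sum_n u_n s^n \sim c\log(1/(1-s))$ as $s \to 1^-$, and dominated convergence gives $U(s) - P_{0y}(s) \to a(y)$, where $P_{0y}(s) := \sum_n \mathbb{P}(S_n = y) s^n$.

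For the second asymptotic (avoiding $j$ only), the standard renewal identity $P_{0j}(s) = \phi_j(s)U(s)$ with $\phi_j(s) := \mathbb{E}[s^{T_j};\, T_j < \infty]$ and $T_j := \inf\{n \geq 1 : S_n = j\}$ yields
\[
1 - \phi_j(s) = \frac{U(s) - P_{0j}(s)}{U(s)} \sim \frac{a(j)}{c\log(1/(1-s))}.
\]
Since $\mathbb{P}(T_j > n)$ is monotone in $n$, Karamata's Tauberian theorem applied to $\sum_n \mathbb{P}(T_j > n)s^n = (1 - \phi_j(s))/(1-s)$ gives $\mathbb{P}(T_j > n) \sim a(j)/(c\log n)$, so $d_j := a(j)/c > 0$.

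For the first asymptotic I pass to the $2\times 2$ matrix generating function $P(s) = (P_{xy}(s))_{x,y\in\{0,j\}}$ of the set $A := \{0,j\}$, noting $P_{00} = P_{jj} = U$ by translation invariance. The matrix renewal equation $P(s) = I + F(s)P(s)$ gives $F(s) = I - P(s)^{-1}$, and a direct computation yields
\[
1 - F_{00}(s) - F_{0j}(s) = \frac{U(s) - P_{0j}(s)}{U(s)^2 - P_{0j}(s)P_{j0}(s)} \sim \frac{a(j)}{\bigl(a(j)+a(-j)\bigr)\,c\,\log(1/(1-s))},
\]
where the equivalent uses $U - P_{0j} \to a(j)$, $U - P_{j0} \to a(-j)$, and the expansion $U^2 - P_{0j}P_{j0} = U\bigl(a(j)+a(-j)\bigr)(1+o(1))$. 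Karamata's theorem applied to $\sum_n \mathbb{P}(\tau_{\{0,j\}} > n) s^n = (1 - F_{00}(s) - F_{0j}(s))/(1-s)$ then gives $\mathbb{P}(\tau_{\{0,j\}} > n) \sim c_j/\log n$ with $c_j := a(j)/\bigl(c(a(j)+a(-j))\bigr) > 0$. The real work lies in the preliminary step (ii): under (A1) this is classical Spitzer theory, whereas under (A2) the integrability of the Fourier integral at the origin is delicate because $1-\phi(t)$ vanishes only to first order, so the argument must fully exploit the compensating cancellation $1 - e^{-i\langle j,t\rangle} = O(|t|)$ in the numerator.
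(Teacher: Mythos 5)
Your route is genuinely different from the paper's. The paper simply quotes the Kesten--Spitzer ratio limit theorem \cite[Theorem~4a]{KS63} together with their explicit evaluation of $\tilde g_{\{0,j\}}$ in terms of the potential kernel, and the Dvoretzky--Erd\H{o}s type asymptotic $\rP(S_k\neq 0,\,k\leq n)\sim \gamma_d/\log n$; you instead rederive the asymptotics from scratch via generating functions, the scalar and $2\times 2$ matrix first-passage renewal equations, and Karamata's Tauberian theorem plus monotonicity. The algebra is correct: $1-F_{00}-F_{0j}=(U-P_{0j})/(U^{2}-P_{0j}P_{j0})$, the Abel limits $U-P_{0j}\to a(j)$, $U-P_{j0}\to a(-j)$ are justified by your dominated-convergence argument on the Fourier integral, and (after noting that your $a(j)$ is Spitzer's $a(-j)$, because you compare $\rP(S_n=0)$ with $\rP(S_n=j)$ rather than with $\rP^j(S_n=0)$) your constants agree with those the paper reads off \cite[(5.17)--(5.18)]{KS63}. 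What this buys is a self-contained proof, in the spirit of the paper's own Lemma~\ref{lem:tauberian}, in place of a citation.

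There are, however, two gaps. First, your input (i), $\rP(S_n=0)\sim c/n$, is false under mere aperiodicity: the simple random walk on $\mathbb{Z}^2$ satisfies (A1) and is aperiodic in the paper's sense but not strongly aperiodic, and $\rP(S_{2n+1}=0)=0$. Your argument only needs $U(s)\sim c\log\frac{1}{1-s}$ (equivalently $\sum_{k\leq n}\rP(S_k=0)\sim c\log n$, i.e.\ \eqref{eq:partialgreen}), which does hold under aperiodicity and can be extracted directly from $U(s)=(2\pi)^{-d}\int_{[-\pi,\pi]^d}\rd t/(1-s\phi(t))$ by the same lower bound $\mathrm{Re}(1-s\phi)\geq s\,\mathrm{Re}(1-\phi)$ you already use; so the step is repairable, but as written it is wrong. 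Second, you assert $a(j)>0$, hence $c_j,d_j>0$, without justification. Since (A1)/(A2) do not assume symmetry, only $a(j)+a(-j)>0$ is immediate from your Fourier representation (its integrand has nonnegative real part), while positivity of $a$ at each nonzero point is a genuine fact about recurrent aperiodic walks; this is precisely where the paper invokes \cite[Propositions~11.7 and 30.2]{Spitzer}, and your proof needs that citation or an argument. Without it you obtain the stated expansions only with constants $\geq 0$, which is weaker than the proposition.
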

\begin{rmk}
	The previous proposition in the case of symmetric random walks was (essentially) treated in \cite{Okada}.
\end{rmk}
\begin{proof}
The proof is a simple application of \cite[Theorem~4a]{KS63}. This theorem states that for any aperiodic random walk in $\mathbb{Z}^d$, $W\subset\mathbb{Z}^d$ a finite subset,
and any $x\in \mathbb{Z}^d$ we have
\begin{equation}\label{eq:KSlimit}
\lim_{n\to\infty} \frac{ \rP^x\left( S_k \notin W, k=1, \dots, n \right)}
				{\rP^0\left( S_k \neq 0, k=1, \dots, n \right)} = \tilde{g}_W(x),
\end{equation}
where for the random walks of interest to us, by \cite[Eq.(1.16)]{KS63}, and $x,y\in \mathbb{Z}^d$
\begin{align}
\tilde{g}_W (x,y)
&:= \mathbf{1}_{[x=y]}+ \sum_{n=1}^\infty \rP^x \left( S_n=y, S_k \notin W, 1\leq k \leq n-1 \right),\notag\\
\tilde{g}_W (x)
&= \lim_{|y|\to \infty} \tilde{g}_W (x,y).\label{eq:gw}
\end{align}
Under assumptions (A1), (A2), it follows from the local limit theorem that
\begin{equation}
\sum_{j=0}^n \mathbb{P}(S_j=0) \sim c \log n,\label{eq:partialgreen}.
\end{equation}
Using this asymptotic, a standard argument, see for example \cite{DE51},
shows that there exists a constant $\gamma_d>0$, depending on the random
walk, such that
\begin{equation}\label{eq:noreturn}
\rP^0 \left\{ S_k\neq 0, \, k=1, \dots, n\right\} \sim \frac{\gamma_d}{\log n}.
\end{equation}
When $W=\{0,j\}$, from \cite[Eq.(5.17) and (5.18)]{KS63} we have that
$$\tilde{g}_{\{0,j\}}(j) = \frac{a(j)}{a(j)+a(-j)}, \quad \tilde{g}_{\{0,j\}}(0) = \frac{a(-j)}{a(j)+a(-j)},$$
where
$$a(j) = \sum_{n=0}^\infty \left[\rP^0(S_n=0) - \rP^j(S_n=0)\right]
=\frac{1}{(2\pi)^d} \int_{[-\pi,\pi]^d} \frac{1-\exp(\ri j t)}{1-\phi(t)}\rd t.$$
Under our assumptions, we have $a(x)>0$ for all $x\neq 0$, by \cite[Propositions~11.7 and 30.2]{Spitzer}, whence $c_j,d_j>0$.
The result follows from this and \eqref{eq:KSlimit}.
\end{proof}

\begin{lemma}
\label{lem:Q_control} Under the assumptions of Theorem~\ref{thm: boundary
range}, for any nonempty finite subset $O$ of $\mathbb{Z}^d$ and any $x,y\in
\mathbb{Z}^d$, there exists $C=C(O,x,y)$ such that for all $n \in
\mathbb{N}$,
\begin{equation*}
  \rP^x \left[ S_n=y, S_j \notin O, \, 1\leq j \leq n-1\right]
  \leq \frac{C}{n \log^2(n)}.
\end{equation*}
\end{lemma}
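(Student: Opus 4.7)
I split the trajectory into three blocks of comparable length and apply the Markov property. Setting $n_1:=\lfloor n/3\rfloor$ and $n_2:=\lfloor 2n/3\rfloor$, applying the Markov property at $n_1$ and then at $n_2$ yields
\[
\rP^x\bigl[S_n=y,\,S_j\notin O,\,1\le j\le n-1\bigr]
=\sum_{z_1,z_2\notin O} P_1(z_1)\,P_2(z_1,z_2)\,P_3(z_2),
\]
where $P_1(z_1):=\rP^x[S_{n_1}=z_1,\,S_j\notin O,\,1\le j\le n_1]$, $P_2(z_1,z_2):=\rP^{z_1}[S_{n_2-n_1}=z_2,\,S_j\notin O,\,1\le j\le n_2-n_1]$, and $P_3(z_2):=\rP^{z_2}[S_{n-n_2}=y,\,S_j\notin O,\,1\le j\le n-n_2-1]$. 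The plan is to extract a factor $1/n$ from the middle term via the local limit theorem and a factor $1/\log n$ from each of the outer terms via the Kesten-Spitzer avoidance estimate~\eqref{eq:KSlimit} already used in the proof of Proposition~\ref{prop: return tail}.

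For the middle factor, I drop the avoidance constraint to obtain $P_2(z_1,z_2)\le\rP(S_{n_2-n_1}=z_2-z_1)$ and invoke the uniform local limit bound $\sup_{w\in\mathbb{Z}^d}\rP(S_m=w)\le C/m$ (valid for every $m\ge 1$ under either (A1) or (A2) by a standard Fourier argument, consistent with the partial-Green estimate~\eqref{eq:partialgreen}), so $P_2(z_1,z_2)\le C/n$ uniformly in $z_1,z_2$. Summing $P_1$ over $z_1\in\mathbb{Z}^d$ removes the endpoint constraint and leaves $\rP^x[S_j\notin O,\,1\le j\le n_1]$; by~\eqref{eq:KSlimit} combined with~\eqref{eq:noreturn} this is asymptotic to $\tilde g_O(x)\cdot\gamma_d/\log n_1$, and the asymptotic upgrades to the uniform upper bound $\le C(O,x)/\log n$ for all $n\ge 2$, the finitely many small values of $n$ being absorbed into the constant.

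To treat $P_3$ symmetrically, I perform a time-reversal: writing $m:=n-n_2$ and defining $\check S_k:=y-\xi_m-\xi_{m-1}-\cdots-\xi_{m-k+1}$, the process $\check S$ is a random walk started at $y$ with i.i.d.\ steps distributed as $-\xi_1$, and on the event $\{S_m=y\}$ under $\rP^{z_2}$ one checks the pathwise identity $\check S_{m-j}=S_j$ for $0\le j\le m$. Consequently
\[
P_3(z_2)=\check\rP^y\bigl[\check S_m=z_2,\,\check S_k\notin O,\,1\le k\le m-1\bigr],
\]
so that $\sum_{z_2\notin O}P_3(z_2)=\check\rP^y[\check S_k\notin O,\,1\le k\le m]$. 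Since $\check S$ has characteristic function $\overline\phi$ it still satisfies (A1) or (A2) and is aperiodic, hence the Kesten-Spitzer estimate applied to it gives $\sum_{z_2\notin O}P_3(z_2)\le C(O,y)/\log n$. Multiplying the three bounds produces the desired $C/(n\log^2 n)$.

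The step requiring the most care is this time-reversal identification for $P_3$ together with the verification that the reversed walk still satisfies the hypotheses of the Kesten-Spitzer estimate; the remaining two ingredients (the uniform local limit bound and~\eqref{eq:KSlimit}) are direct invocations of tools already available under assumptions (A1) and (A2).
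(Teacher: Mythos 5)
Your argument is correct, and it takes a genuinely different route from the paper. The paper obtains the bound by quoting Kesten's sharp asymptotics for $Q_O^n(x,y)$ (together with \cite[Theorem~4.1]{JP72} and \cite[Theorem~6a]{Kes}), which are only available for \emph{strongly} aperiodic walks, and then removes the strong aperiodicity hypothesis by embedding the walk into a lazy walk via a geometric time change. You instead split the path at $\lfloor n/3\rfloor$ and $\lfloor 2n/3\rfloor$ and bound the three factors separately: the middle block by the uniform bound $\sup_w \rP(S_m=w)\leq C/m$ (which indeed holds under (A1) or (A2) with mere aperiodicity, since the finitely many points of the torus where $|\phi|=1$ contribute the same order as the origin in the Fourier integral), and the two outer blocks by the avoidance estimate \eqref{eq:KSlimit} combined with \eqref{eq:noreturn}, the last block after a time reversal whose hypotheses (aperiodicity and (A1)/(A2) for the walk with steps $-\xi_1$) you correctly verify. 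Your approach is more elementary and self-contained: it never needs strong aperiodicity, hence no lazification step, and it only uses ingredients already deployed in Proposition~\ref{prop: return tail}. The trade-off is that it yields only the upper bound $C/(n\log^2 n)$, whereas the paper's route actually gives the precise asymptotics of $Q_O^n(x,y)$ (more than the lemma requires). Two minor points you gloss over but which are harmless: the degenerate small values of $n$ (where the block decomposition or $\log^2 n$ makes no sense) must be absorbed into the constant, as you indicate; and turning the limits \eqref{eq:KSlimit} and \eqref{eq:noreturn} into uniform upper bounds requires exactly the "finitely many $n$" remark you make, including possibly $\tilde g_O(x)=0$, in which case the upper bound is still valid.
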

\begin{proof}
Let
$$Q_O^n (x,y):= \rP^x \left[ S_n=y, S_j \notin O, \, 1\leq j \leq n-1\right].$$
Assume first that (A1) holds, i.e., $d=1$, and that the walk is strongly
aperiodic. By \cite[Theorems~8 and 9]{Kes} we have
\begin{align*}
 \lim_{n\to \infty}\frac{Q_{O}^n (x,y)}{\pi \gamma} n \log(n)^2 =1 ,
\end{align*}
which implies the result.
	
Under (A2), still with strong aperiodicity, the estimate
\begin{align*}
  \sum_{u,v\in O}Q_{O}^n (u,v) \sim \frac{2\pi \mathrm{det}(\Sigma)^{1/2}}{n \log(n)^2},
\end{align*}
follows from \cite[Theorem~9]{Kes} and \cite[Theorem~4.1]{JP72}. Then
\begin{align*}
Q_{O}^n (x,y) \sim \frac{2\pi \mathrm{det}(\Sigma)^{1/2}}{n \log(n)^2}\tilde{g}_O(x)
\tilde{g}_{-O}(-y),
\end{align*}
follows from the above and \cite[Theorem~6a]{Kes}. Indeed, although
\cite[Theorem~9]{Kes} is stated only for one-dimensional walks in the domain
of attraction of a  symmetric stable law, as explained in the proof the
result remains true for any recurrent random walk satisfying
\cite[Equation~(11.1)]{Kes}. This has been established under (A2) for
strongly aperiodic random walks in \cite[Theorem~4.1]{JP72}.

Under (A1) or (A2), but with strong aperiodicity, we have proved the result
of the lemma. We claim that it still holds if one weakens strong aperiodicity
to aperiodicity, but a little extra work is needed. Letting $\{S_j\}_{j\geq
0}$ denote the original random walk, we denote with $\{\tilde{S}_j\}_{j\geq
0}$ a \textit{lazy} version of it. In particular, if $S_j=\xi_1+\cdots
+\xi_j$, we let $\tilde{S}_j=\tilde{\xi}_1+\cdots +\tilde{\xi}_j$, where
$\tilde{\xi}_j=B_j \xi_j$ where $\rP\{B_j=1\}=\rho \in (0,1)$ and
$\rP\{B_j=0\} = 1-\rho$. It can be easily checked that $\{\tilde{S}_j\}_{j
\geq 0}$ is then strongly aperiodic and therefore that
\begin{align*}
  \tilde{Q}_{O}^n (x,y)
  &:=\rP^x \left[ \tilde{S}_n=y, \tilde{S}_j \notin O, \, 1\leq j \leq n-1\right] \leq \frac{C}{n \log(n)^2}.
\end{align*}
In addition let
$$T_0:=0, \qquad T_n:=\inf\{n> T_{n-1}: \tilde{\xi}_n \neq 0\}, \, n\geq 1.$$
It is then clear that for all $j\geq 1$, $T_{j}-T_{j-1}$ are i.i.d.\
geometrically distributed on the positive integers. With this notation, we
can embed a path of $\{S_j\}_{j\geq 0}$ into a path of
$\{\tilde{S}_j\}_{j\geq 0}$ by letting $S_j = \tilde{S}_{T_j}$ for $j\geq 0$.
Up to a time-change, the paths of $\{S_j\}$ and $\{\tilde{S}_j\}$ coincide
and thus
\begin{align*}
Q_O^n (x,y)
&= \rP^x \left[ S_n=y, S_j \notin O, \, 1\leq j \leq n-1\right]\\
&=\rP^x \left[ \tilde{S}_{T_n}=y, S_j \notin O, \, 1\leq j \leq T_n-1\right]\\
&=\sum_{k=n}^{\infty}\rP^x \left[ \tilde{S}_{k}=y, S_j \notin O, \, 1\leq j \leq k-1\right]\rP(T_n=k)\\
&\leq C\sum_{k=n}^{\infty}\frac{\rP(T_n=k)}{k \log(k)^2}\leq \frac{C}{n \log(n)^2}\sum_{k=n}^{\infty}\rP(T_n=k) \leq \frac{C}{n \log(n)^2}.
\qedhere
\end{align*}
\end{proof}


As $\mathbb{Z}^d$ is an Abelian group, in this case we consider instead a bi-infinite i.i.d.\ sequence $\{X_i\}_{i\in \mathbb{Z}}$ and write for $n\in\mathbb{N}$,
\[
S_n^{(-)}=-\sum_{k=-n}^{-1}X_k.
\]
In what follows we will make use of the fact that for $k<n$ and $v\in\mathbb{Z}^2$,
\[
S_k=S_n+v\ \ \  \text{iff} \ \ \ S_{n-k}^{(-)}\circ T^n=v\ \ \ \text{iff}\ \ S_{n-k}\circ T^k=-v.
\]
\begin{proposition}\label{prop:moments_boundary}
	Under the assumptions of Theorem~\ref{thm: boundary range}, there exists a constant $C>0$ such that
	\begin{equation}\label{eq: first moment asymp}
	\mathbb{E}\left(\left|\partial R(n)\right|\right)\sim \frac{Cn}{\log^2(n)},\ \text{as } n\to\infty.
	\end{equation}
	In addition, there exists $M>0$ such that
	\begin{equation}\label{eq: Renyi ineq}
	\mathrm{Var}\left(\left|\partial R(n)\right|\right)\lesssim \frac{Mn^2\log\log(n)}{{\log^5(n)}}\ \  \text{as}\ n\to\infty.
	\end{equation}
\end{proposition}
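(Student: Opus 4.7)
\emph{Plan.} The strategy is a last-visit decomposition: for $1 \leq k \leq n$, let $E_k^n$ be the event that $k$ is the last time in $[1,n]$ at which the walk visits $S_k$ and that $S_k$ has at least one nearest-neighbour site outside $R(n)$, so that $|\partial R(n)| = \sum_{k=1}^n \mathbf{1}_{E_k^n}$. Writing $\hat S_i := S_k - S_{k-i}$ for the time-reversed past walk (which, as a sum of $i$ i.i.d.\ copies of $\xi$, has the same law as $S$) and $S'_j := S_{k+j} - S_k$ for the shifted future walk, the two are independent and a direct translation of the defining conditions gives $E_k^n = \bigcup_{v\in \mathds{E}_d} A_k^n(v)$ with
\[
A_k^n(v) := \bigl\{\hat S_i \ne -v,\ 1\le i\le k\bigr\} \cap \bigl\{S'_j \notin \{0,v\},\ 1\le j\le n-k\bigr\}.
\]

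\emph{First moment.} By independence of past and future, Proposition~\ref{prop: return tail} together with its natural extensions to avoidance of larger finite sets (proved by the same appeal to \cite[Theorem~4a]{KS63}) yields $\mathbb{P}(A_k^n(v)) \sim d_{-v}\, c_v/(\log k\,\log(n-k))$ and analogous asymptotics for the intersections $A_k^n(v) \cap A_k^n(v')$. Inclusion--exclusion over $v \in \mathds{E}_d$ then produces $\mathbb{P}(E_k^n) \sim C/(\log k\,\log(n-k))$ for an explicit $C > 0$. Summing over $k$ from $1$ to $n$ and noting that the dominant contribution is from $k \in [n^{1/2}, n-n^{1/2}]$, where both logarithms are asymptotic to $\log n$, while the endpoint pieces contribute $O(n^{1/2})$, gives $\mathbb{E}(|\partial R(n)|) \sim Cn/\log^2(n)$.

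\emph{Variance.} Expand $\mathrm{Var}(|\partial R(n)|) = \sum_{k,k'=1}^n \mathrm{Cov}(\mathbf{1}_{E_k^n}, \mathbf{1}_{E_{k'}^n})$. The diagonal $k = k'$ contributes at most $\mathbb{E}(|\partial R(n)|) = O(n/\log^2 n)$, well below the target bound. For $k < k'$ with $m := k'-k$, split the path into three independent pieces: a past of length $k$, a middle of length $m$ ending at $V_m := S_{k'} - S_k$, and a future of length $n-k'$. Conditioning on $V_m = x$, both $E_k^n$ and $E_{k'}^n$ translate into conditions that decouple across the three pieces: past and future become avoidance of finite sets (depending on $x$ and $v \in \mathds{E}_d$), while the middle is a walk of length $m$ constrained to end at $x$ while avoiding a finite set. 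The past and future factors are of order $1/\log k$ and $1/\log(n-k')$ by Proposition~\ref{prop: return tail} and its extensions, and Lemma~\ref{lem:Q_control} bounds the middle factor by $O(1/(m\log^2 m))$ uniformly in $x$ outside a fixed finite set. A parallel factorization of $\mathbb{P}(E_k^n)\mathbb{P}(E_{k'}^n)$ is obtained by replacing the middle piece by an independent copy of the same walk, allowing a direct comparison.

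\emph{Main obstacle.} The main work is to extract the sub-leading cancellation between the joint factorization of $\mathbb{P}(E_k^n \cap E_{k'}^n)$ and the product factorization of $\mathbb{P}(E_k^n)\mathbb{P}(E_{k'}^n)$: the leading orders of the two are equal (after summation over $x$ via the local limit theorem), and the covariance is determined by the correction. Summing the resulting bound over $k<k'$ should give the announced $n^2 \log\log n/\log^5 n$, with the $\log\log n$ factor emerging from a harmonic-type sum of the form $\sum_{m=1}^{n} 1/(m\log m)$ coming from the middle factor. This follows the Flatto/Jain--Pruitt paradigm \cite{Flatto, JP72} for $\mathrm{Var}(|R(n)|)$, the complication being that each event here involves avoidance of finite sets rather than a single point; this is accommodated by the sharp asymptotics of Proposition~\ref{prop: return tail} and its extensions together with the uniform estimate of Lemma~\ref{lem:Q_control}.
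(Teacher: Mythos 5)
Your first-moment argument is essentially the paper's own: the last-visit decomposition, independence of the reversed past and the shifted future, the \cite{KS63}-based asymptotics for avoidance of finite sets (Proposition~\ref{prop: return tail} and its extension via $\tilde g_W$), and inclusion--exclusion over $\mathds{E}_d$. That part is sound, modulo the same routine details the paper handles.

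The variance part has a genuine gap. You set up the three-block decomposition conditioned on $V_m=x$ and then declare that ``the main work is to extract the sub-leading cancellation'' between $\mathbb{P}(E_k^n\cap E_{k'}^n)$ and $\mathbb{P}(E_k^n)\mathbb{P}(E_{k'}^n)$, asserting the sum ``should give'' $n^2\log\log n/\log^5 n$. That cancellation \emph{is} the content of \eqref{eq: Renyi ineq}; deferring it means the proposition is not proved. Moreover, as sketched, your route is doubtful: comparing the joint factorization with the product (an independent copy of the middle block) via the local limit theorem is exactly the delicate Jain--Pruitt-type second-moment computation, and your accounting of the $\log\log n$ (a harmonic sum $\sum_m 1/(m\log m)$ from the middle factor) does not come out of your own bounds --- the per-endpoint bound $1/(m\log^2 m)$ must be summed over $x$, which returns the avoidance probability $\sim 1/\log m$, not $1/(m\log m)$; and Lemma~\ref{lem:Q_control} as stated has a constant $C(O,x,y)$ depending on the endpoint, so the uniformity in $x$ you invoke is itself unproven. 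The paper's proof avoids all of this with a purely one-sided argument: for $k<m$ it uses monotonicity to replace $B_k(n-k)$ and $A_m(m)$ by $B_k(\lfloor(m-k)/2\rfloor)$ and $A_m(\lfloor(m-k)/2\rfloor)$, so that the four events $A_k(k)$, $B_k(\lfloor(m-k)/2\rfloor)$, $A_m(\lfloor(m-k)/2\rfloor)$, $B_m(n-m)$ are independent; the covariance $\iota(k,m)$ is then bounded by differences such as $\theta(\lfloor(m-k)/2\rfloor)-\theta(n-k)$, which are controlled by a first-entrance (telescoping) sum of the bound in Lemma~\ref{lem:Q_control} and are $O(\log\log n/\log^2 n)$ on the region $D(n)=\{m-k\ge n/\log^5 n,\ k,\,n-m>\sqrt n\}$ --- this restriction, not a harmonic sum, is where the $\log\log n$ originates --- while the pairs outside $D(n)$ are counted trivially. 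No local limit theorem, no sum over the intermediate point, no exact cancellation is needed; that replacement-by-half-windows step, or some substitute establishing the covariance bound, is what your proposal is missing.
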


\begin{proof}
It follows easily from the equality $\partial R(n) = \bigcup_{v \in
\mathds{E}_d} \partial_v R(n)$ in~\eqref{eq:partialR_union} that for any
$v\in \mathds{E}_d$ we have
\begin{equation}\label{eq:inclusion_exclusion_lowerbound}
|\partial_v R(n)| \leq |\partial R(n)|\leq \sum_{v \in \mathds{E}_d}|\partial_v R(n)|.
\end{equation}
Using the inclusion-exclusion principle and enumerating the elements of
$\mathds{E}_d$ as $v_1, \dots, v_{2d}$ we have
\begin{align*}
|\partial R(n)|
&= \sum_{V\subset \mathds{E}_d} (-1)^{|V|+1}
\left| \bigcap_{v\in V} \partial_v R(n) \right|.
%
\end{align*}
Given a collection, $V:=\{v_1, \dots, v_l\}$ say, of distinct vectors in $\mathbb{Z}^2$
we have that
$$
\bigcap_{v\in V} \partial_{v} R(n)
=R(n) \cap \bigcap_{v\in V} \left(R(n)+v \right)^c
$$
whence we can write as in the proof of
Proposition~\ref{prop:range_of_Birkhoff}
\begin{align*}
\bigg|\bigcap_{v\in V} \partial_{v} R(n)\bigg|
&=\sum_{k=1}^n \mathbf{1}_{A_{k,V}(k)}(\omega)\mathbf{1}_{B_{k,V}(n-k)}(\omega)
\end{align*}
	where
\begin{align*}
	A_{0,V}(L)= \left\{\omega\in \Omega:\ \forall 0<l<L, S^{(-)}_l\notin V \right\}, \qquad A_{k,V}(L)=T^{-k}A_{0,V}(L),
\end{align*}
	and
\begin{align*}
B_{0,V}(L)=\left\{\omega\in \Omega:\ \forall 0<l<L,\ S_l\notin V\cup\{0\} \right\}, \quad	B_{k,V}(L)=T^{-k}B_{0,V}(L).
\end{align*}
To see why notice that
	\[
A_{k}(k)\cap B_{k}(n-k) = \left\{\omega\in\Omega: \forall l\in [1,n],\ S_k\notin S_l+V,\ \forall m\in [k+1,n],\ S_m\neq S_k \right\}.
	\]

Let us first consider the case $V=\{v\}$.
Then one has
	\[
	\left|\partial_v R(n)\right|(\omega)
	=\sum_{k=1}^n \mathbf{1}_{A_{k}(k)}(\omega)\mathbf{1}_{B_{k}(n-k)}(\omega),
	\]
	where $A_{k}:=A_{k,\{v\}}$, $B_{k}:=B_{k,\{v\}}$.
	By the Markov property for the random walk, for all $1\leq k\leq n$, $A_k(k)$ and $B_k(n-k)$ are independent, thus
\begin{align*}
	\mathbb{E}\left(\left|\partial_v R(n)\right|\right)&= \sum_{k=1}^n \mathbb{P}\left(A_k(k)\right)\mathbb{P}\left(B_k(n-k)\right)\\
	&= \sum_{k=1}^{n} \mathbb{P}\Big(\forall 1\leq l\leq k, S_l\neq v\Big)\mathbb{P}\Big(\forall 1\leq l\leq n-k,\ S_l\notin \{0,v\}\Big)\\
&= \sum_{k=2}^{n-2} \frac{c_v d_v}{\log(k)\log(n-k)} (1+o(1)),
\end{align*}
as $n\to \infty$, where $c_v, d_v>0$ by Proposition~\ref{prop: return tail}.
An easy calculation, using the fact that $\log$ is slowly varying, shows that
\begin{align*}
\sum_{k=2}^{n-2} \frac{1}{\log(k)\log(n-k)}
&= \sum_{k=\lfloor n/\log^2 n\rfloor}^{n-\lfloor n/\log^2 n\rfloor}
\frac{1}{\log(k)\log(n-k)}+O\left(\frac{n}{(\log n)^3} \right)\\
&= \frac{n}{\log(n)^2}(1+o(1)) +O\left(\frac{n}{(\log n)^3} \right),
\end{align*}
where we write $\lfloor x\rfloor$ for the integer part of $x$.

Assume now $|V|> 1$. For any set $W$, the proof of Proposition~\ref{prop:
return tail} and in particular \eqref{eq:KSlimit} imply that, as $n\to\infty$
\begin{align*}
\mathbb{P}\left(\forall 1\leq k\leq n,\ S_k\notin W\right)
&=\frac{\tilde{g}_W(0)\gamma_d}{\log(n)}+o\left(\log(n)^{-1}\right)\ \text{as}\ n\to\infty,
\end{align*}
with $\gamma_d$ and $\tilde{g}_W(0)$ as defined in \eqref{eq:noreturn} and
\eqref{eq:gw} respectively.
Therefore similar arguments show that for any $V\subset \mathds{E}_d$ we have that
\begin{align*}
\rE \bigg|\bigcap_{v\in V} \partial_{v} R(n)\bigg|
\sim \gamma_d^2\, \tilde{g}_V(0) \tilde{g}_{V\cup \{0\}}(0) \, \frac{n}{\log(n)^2},
\end{align*}
where in general it is possible that the constant is zero.

Going back to $\partial R(n)$, we thus have that
\begin{align*}
\lim_{n\to\infty}\frac{\log(n)^2}{n}\rE |\partial R(n)|
&=\sum_{V\subseteq \mathds{E}_d }(-1)^{|V|+1} \gamma_d^2 \, \tilde{g}_V(0) \tilde{g}_{V\cup \{0\}}(0) \geq c_v d_v >0
\end{align*}
where the last two inequalities follow from \eqref{eq:inclusion_exclusion_lowerbound} and Proposition~\ref{prop: return tail}.
This proves \eqref{eq: first moment asymp}.

For the second part, from \eqref{eq:inclusion_exclusion_lowerbound}
$$\mathbb{E}\left(\left|\partial R(n)\right|^2\right)
= \mathbb{E}\left[\left(\sum_{v\in \mathds{E}_d}\left|\partial_v R(n)\right|\right)^2\right]
\leq C \sum_{v\in \mathds{E}_d}\mathbb{E}\left[\left|\partial_v R(n)\right|^2\right].$$
Letting $v\in \mathds{E}_d$ be arbitrary we then have
\[
\mathbb{E}\left(\left|\partial_v R(n)\right|^2\right)=\mathbb{E}\sum_{k=1}^n \left(\mathbf{1}_{A_k(k)}\mathbf{1}_ {B_k(n-k)}\right)+2\sum_{1\leq k<m\leq n}\mathbb{E}\left( \mathbf{1}_{A_k(k)}\mathbf{1}_{B_k(n-k)}\mathbf{1}_{A_m(m)}\mathbf{1}_{B_m(n-m)} \right).
\]
The first term is  equal to $\mathbb{E}\left(\left|\partial_v R(n)\right|\right)$.
For the second term, notice that for $1\leq k<m\leq n$,
	\begin{equation}\label{eq: decomposition to independent events}
	\mathbf{1}_{B_k(n-k)}\mathbf{1}_{A_m(m)}\leq \mathbf{1}_{B_k(\lfloor(m-k)/2\rfloor)}\mathbf{1}_{A_m(\lfloor(m-k)/2\rfloor)},
	\end{equation}
since  for any $k,m $, $B_k(\cdot)$, $A_m(\cdot)$ are decreasing sequences of sets.
To keep notation concise, for any integers $l,k$ we will denote $\mathbb{P}(A_k(l))$ by $\psi(l)$ and $\mathbb{P}(B_k(l))$ by $\theta(l)$, where we can drop the dependence on $k$ since $\mathbb{P}\circ T^{-1}=\mathbb{P}$.  Since for $1\leq k\leq m\leq n$ the events
	\[
	A_k(k),\, B_k\left(\lfloor(m-k)/2\rfloor \right),\, A_m\left(\lfloor(m-k)/2\rfloor \right),\, B_m(n-m),
	\]
are independent, we have
\begin{align*}
	\mathbb{E}\left(\mathbf{1}_{A_k(k)}\mathbf{1}_{B_k(n-k)}\mathbf{1}_{A_m(m)}\mathbf{1}_{B_m(n-m)}\right)\leq \psi(k)\theta\left(\frac{m-k}{2}\right)\psi\left(\frac{m-k}{2}\right)\theta(n-m).
	\end{align*}
	This shows that for $k<m$,
	\[
	\iota(k,m):=\mathbb{E}\left(\mathbf{1}_{A_k(k)}\mathbf{1}_{B_k(n-k)}\mathbf{1}_{A_m(m)}\mathbf{1}_{B_m(n-m)}\right)
    -\mathbb{E}\left(\mathbf{1}_{A_k(k)}\mathbf{1}_{B_k(n-k)}\right)\mathbb{E}\left(\mathbf{1}_{A_m(m)}\mathbf{1}_{B_m(n-m)}\right)
	\]
	is bounded from above by
	\[
	\psi(k)\theta(n-m)\left\{\psi\left(\frac{m-k}{2}\right)\left[\theta\left(\frac{m-k}
	{2}\right)-\theta(n-k)\right]+\theta(n-k)\left[\psi\left(\frac{m-k}{2}\right)-\psi(m)
	\right]\right\}
	\]
	Denote by
	\[
	D(n):=\left\{(k,m)\in [1,n]^2:\ k, n-m>\sqrt{n}\ \  \text{and}\ \  m-k\geq \frac{n}{\log^5(n)}\right\}.
	\]
	Since for $n$ large enough
	\[
	\log(n)-5\log\log(n) - \log(2) \geq \frac{\log(n)}{2},
	\]
	it follows that for all $(k,m)\in D(n)$,
	\begin{equation}\label{eq: 111}
	\psi\left(\frac{m-k}{2}\right) \sim \frac{c_v}{\log((m-k)/2)} \leq \frac{c_v}{\log(n)-5\log\log(n)-\log(2)}\leq \frac{2c_v}{\log(n)},
	\end{equation}
	and
	\begin{equation}\label{eq: 222}
	\theta\left(\frac{m-k}{2}\right) \sim \frac{d_v}{\log((m-k)/2)}\leq \frac{2d_v}{\log(n)}.
	\end{equation}
with $c_v, d_v$ as in Proposition~\ref{prop: return tail}.
Similarly,
for all $(k,m)\in D(n)$ and $n$ large enough we have that
\begin{align*}
\theta\left(\frac{m-k}{2}\right)-\theta(n-k)
&= \sum_{j=\lfloor \frac{m-k}{2}\rfloor}^{n-k} \left(\theta(j)-\theta(j+1)\right)\\
&\leq \sum_{j=\lfloor \frac{m-k}{2}\rfloor}^{n-k} \rP ^{0}\left[ S_{j+1}\in \{0,v\}, S_l \notin \{0,v\}, \forall 1\leq l\leq j\right]\\
& = \sum_{j=\lfloor \frac{m-k}{2}\rfloor}^{n-k} \sum_{w\in \{0,v\}}\rP^{0}\left[ S_{j+1}=w, S_l \notin \{0,v\}, 1\leq l\leq j\right].
\end{align*}
By Lemma~\ref{lem:Q_control}, the term in the sum is bounded by $C/(j
\log^2(j))$. Thus, since  $(m-k)/2\to\infty$ when $n\to \infty$,
\begin{align}
\theta\left(\frac{m-k}{2}\right)-\theta(n-k)
&\leq C\sum_{j=\lfloor \frac{m-k}{2}\rfloor}^{n-k} \frac{1}{j \log(j)^2}\notag\\
&\leq C\pi\gamma \int_{s=\lfloor \frac{m-k}{2}\rfloor}^{n-k} \frac{\rd s}{s \log(s)^2}\notag\\
&= C\pi\gamma \left[ \frac{1}{\log((m-k)/2)}-\frac{1}{\log(n-k)}\right]\notag\\
&\leq  C\pi \gamma \left[ \frac{1}{\log n - 5 \log \log n - \log 2}-\frac{1}{\log n} \right]\notag\\
& \leq \frac{C\log\log(n)}{\log^2(n)},\label{eq: 333}
\end{align}
and similarly
\begin{equation}\label{eq: 444}
\psi\left(\frac{m-k}{2}\right)-\psi(m)\lesssim  \frac{C\log\log(n)}{\log^2(n)}.
\end{equation}	
since for $(k,m)\in D(n)$, it holds that ${n}/{\log^5(n)}\leq m\leq n$.
	
Combining \eqref{eq: 111}, \eqref{eq: 222}, \eqref{eq: 333} and \eqref{eq: 444}, we
obtain a global constant $M>0$ such that for large $n$, for all $(k,m)\in D(n)$
\begin{equation}\label{eq: 555}
	\iota(k,m)\lesssim \frac{M\log\log(n)}{\log^3(n)}\frac{1}{\log(n-m)}\frac{1}{\log(k)}\leq 	\frac{4M\log\log(n)}{\log^5(n)}.
\end{equation}
Since $\iota(k,m)\leq 1$ for all $k,m$, we have
\begin{align*}
\sum_{1\leq k<m\leq n}\iota(k,m) &\leq \# \left\{(k,l)\in [1,n]^2\setminus D(n):\ k<l\right\}+	\sum_{(k,m)\in D(n)}\iota(k,m)\\
&\leq \left(2 n^{3/2}+\frac{n^2}{\log^5(n)}\right)+\frac{Mn^2\log\log(n)}{\log^5(n)}.
\end{align*}
This together with \eqref{eq: first moment asymp} implies \eqref{eq: Renyi ineq}.
\end{proof}
\begin{corollary}
Under the assumptions of Theorem~\ref{thm: boundary range}, there exists a
constant $C>0$ such that
\[
\lim_{n\to\infty}\frac{\left|\partial R(n)\right|}{\left(n/\log^2(n)\right)}=C,\ \  \text{in probability.}
\]
\end{corollary}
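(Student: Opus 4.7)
The plan is to apply Chebyshev's inequality, which reduces the corollary to the moment estimates already established in Proposition~\ref{prop:moments_boundary}. First I would set $Y_n := |\partial R(n)| \log^2(n)/n$ and let $C$ be the constant appearing in~\eqref{eq: first moment asymp}, so that $\mathbb{E}(Y_n) \to C$ by construction.

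Next I would rescale the variance bound~\eqref{eq: Renyi ineq} accordingly:
\[
\mathrm{Var}(Y_n) = \frac{\log^4(n)}{n^2} \mathrm{Var}(|\partial R(n)|) \lesssim \frac{M \log\log(n)}{\log(n)},
\]
which tends to zero as $n\to\infty$. Chebyshev's inequality then yields, for every $\epsilon>0$,
\[
\mathbb{P}\bigl(|Y_n - \mathbb{E}(Y_n)| > \epsilon\bigr) \leq \frac{\mathrm{Var}(Y_n)}{\epsilon^2} \to 0,
\]
and combining this with $\mathbb{E}(Y_n) \to C$ gives $Y_n \to C$ in probability, which is the claim.

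The one subtlety worth flagging is that the variance bound is just barely sufficient for this argument: the $\log^5(n)$ denominator in~\eqref{eq: Renyi ineq} beats the $\log^4(n)$ coming from $(n/\log^2(n))^2$ by precisely one logarithmic factor, which is exactly enough to absorb the slowly growing $\log\log(n)$ numerator. There is therefore no real obstacle at this stage—the substantive work has already been carried out in bounding the first two moments of $|\partial R(n)|$; promoting this to almost sure convergence (which is the full statement of Theorem~\ref{thm: boundary range}) will presumably require the Borel--Cantelli strengthening alluded to in the introduction, but convergence in probability follows immediately from the two-moment input.
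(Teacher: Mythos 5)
Your proof is correct and matches the paper's (implicit) argument: the corollary is exactly the Chebyshev step, using the first-moment asymptotic~\eqref{eq: first moment asymp} together with the variance bound~\eqref{eq: Renyi ineq}, whose $\log^5(n)$ beats the $\log^4(n)$ from the squared mean by one logarithmic factor, leaving the decaying ratio $\log\log(n)/\log(n)$. This is the same computation the paper records right after the corollary as the bound~\eqref{eq: conc}.
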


The gap between the variance of $|\partial R(n)|$ and the square of its expectation implies via Chebyshev's inequality that for all  $\epsilon>0$ there exists $C>0$ such that
\begin{equation}\label{eq: conc}
\mathbb{P}\Big(\big||\partial R(n)|-\mathbb{E}\left(|\partial R(n)|\right)\big|>\epsilon\mathbb{E}\left(|\partial R(n)|\right) \Big)\leq \frac{C\log\log(n)}{\epsilon^2 \log(n)}.
\end{equation}
The gap of order $1/\log(n)$ in the decay of these probabilities is enough to
guarantee almost sure convergence of $|\partial R_{N_k}|$ for $N_k=\exp\left(
n^a \right)$ with $a>1$. By a different method looking at \[
Z_n=\frac{\left|\partial R(n)\right|}{\mathbb{E}\left(\left|\partial
R(n)\right|\right)}-1,
\]
one can show almost sure convergence  of $Z_{N_k}$ to $0$ where $N_k$ is of the form $[\exp(n^a)]$ with $a>1/2$. This is since for $a>1/2$, $\sum_{k=1}^\infty \mathbb{E}\left(\left|Z_{N_k}\right|^2\right)<\infty$.\\
Unfortunately as $|\partial R(n)|$ is not necessarily monotone, this subsequence is too thin in order to interpolate the almost sure convergence from the subsequence to almost sure convergence along the whole sequence. To that end we  use a method from \cite{Flatto}.
\begin{defn}
	(i) Given $\delta>0$, the sequence of random variables $|\partial R(n)|$ satisfies \textit{property} $\mathbf{A}(\delta)$ if for all $\epsilon_0>0$ there exists $C=C(\epsilon_0,\delta)>0$ such that for all $n\geq 2$ and $\epsilon\geq \epsilon_0$,
	\[
	\mathbb{P}\left(|\partial R(n)|>(1+\epsilon)\mathbb{E}\left(|\partial R(n)|\right) \right)\leq \frac{C}{\epsilon^2 \log^\delta(n)}.
	\]
	(ii) Given $\delta>0$, the sequence of random variables $|\partial R(n)|$ satisfies property $\mathbf{D}(\delta)$ if for all $\epsilon_0>0$ there exists $C=C(\epsilon_0,\delta)>0$ such that for all $n\geq 2$ and $\epsilon\geq \epsilon_0$,
	\[
	\mathbb{P}\left(|\partial R(n)|<(1-\epsilon)\mathbb{E}\left(|\partial R(n)|\right) \right)\leq \frac{C}{\epsilon^2 \log^\delta(n)}.
	\]
\end{defn}

\begin{theorem}\label{thm: Flatto}[Theorem~4.2 in \cite{Flatto}]
	For all $\delta>0$, if $|\partial R(n)|$ satisfies property $\mathbf{A}(\delta)$ (respectively $\mathbf{D}(\delta)$) then it satisfies property $\mathbf{A}(4\delta/3)$ (respectively $\mathbf{D}(4\delta/3)$).
\end{theorem}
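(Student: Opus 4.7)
The plan is to verify that Flatto's bootstrap argument from \cite{Flatto}, originally stated for the range $|R(n)|$, carries over to the boundary $|\partial R(n)|$. I would carry out the argument for property $\mathbf{A}(\delta)$; property $\mathbf{D}(\delta)$ is treated symmetrically. Throughout I write $a_n := \mathbb{E}|\partial R(n)| \sim Cn/\log^2(n)$ (by Proposition~\ref{prop:moments_boundary}) and $X_n := |\partial R(n)|/a_n$.

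The strategy is to introduce, at each scale $n$, a smoothed statistic $\bar{X}_n := \frac{1}{M+1}\sum_{j=0}^{M} X_{n_j}$, where $M = M(n)$ is a slowly growing parameter (to be optimized below) and $n = n_0 > n_1 > \cdots > n_M$ are times with consecutive spacings of order $n/M$. The starting point is the union bound
\begin{align*}
\mathbb{P}(X_n > 1+\epsilon) \leq \mathbb{P}(\bar{X}_n > 1+\epsilon/2) + \mathbb{P}(|X_n - \bar{X}_n| > \epsilon/2).
\end{align*}
The first probability would be bounded by a further union bound together with the hypothesis $\mathbf{A}(\delta)$ applied at each $n_j$, yielding a contribution of order $(M+1)/(\epsilon^2 \log^\delta(n))$. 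For the second, I would apply Chebyshev's inequality and reduce the problem to the second moment $\mathbb{E}[(X_n - \bar{X}_n)^2]$, which expands into a double sum of covariances $\mathrm{Cov}(X_{n_i},X_{n_j})$.

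The key analytic input is an off-diagonal decorrelation bound: for sufficiently separated indices $i, j$, $\mathrm{Cov}(X_{n_i}, X_{n_j})$ should be significantly smaller than $\mathbb{E}[(X_n-1)^2]$. I would derive such a bound by re-running the covariance computation from the proof of Proposition~\ref{prop:moments_boundary}: decompose $|\partial R(n_i)|$ and $|\partial R(n_j)|$ using the cylinder indicator events $\mathbf{1}_{A_k(k)}\mathbf{1}_{B_k(n-k)}$, then exploit the independence of the relevant increments together with the hitting-probability asymptotics in Proposition~\ref{prop: return tail} and Lemma~\ref{lem:Q_control}. Averaging over the $M+1$ terms then reduces the effective variance of $\bar{X}_n$ by a factor depending on $M$.

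Balancing the two probabilities via the optimal choice of $M$, following the optimization carried out in \cite{Flatto}, produces the announced upper bound of the form $C/(\epsilon^2 \log^{4\delta/3}(n))$. The main obstacle, and the reason the plan requires real work rather than a direct quotation of \cite{Flatto}, is the lack of monotonicity for $|\partial R(n)|$: Flatto's original covariance estimates leverage the monotonicity of $|R(n)|$, which fails here, so the decorrelation bounds must be re-derived using the decomposition machinery developed in Section~\ref{sec: boundary Z^2}. Once this hurdle is cleared, the bootstrap and the precise factor $4/3$ emerge from the optimization exactly as in \cite{Flatto}.
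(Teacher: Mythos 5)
There is a genuine gap: your decomposition cannot produce the exponent $4\delta/3$. In your first step you bound $\mathbb{P}(\bar X_n > 1+\epsilon/2)$ by a union bound over the $M+1$ times, at cost $(M+1)/(\epsilon^2\log^\delta (n))$; this term is already at least of order $\log^{-\delta}(n)$, so no choice of $M$ and no ``balancing'' against the Chebyshev term can make the final estimate decay like $\log^{-4\delta/3}(n)$. The entire source of Flatto's improvement is that the event ``the total is $(1+\epsilon)$-exceptional'' forces either \emph{two} blocks to be moderately exceptional or one block to be hugely exceptional, and for blocks built from \emph{disjoint increments} (hence independent) the probability of two moderate exceptions multiplies, giving $(\log^{-\delta}(n))^2$ per pair; a union bound over single exceptional times discards exactly this quadratic gain. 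Concretely, the paper's proof splits the walk into $N=\lfloor\log^{\delta/3}(n)\rfloor$ increment blocks, uses the subadditivity $|\partial R(n)|\le\sum_{i\le N}|\partial X_{n,i}|$, where each $|\partial X_{n,i}|$ is distributed as the boundary of an independent walk of length about $n/N$, then the combinatorial inclusion \eqref{eq: 0th Flatto}, the hypothesis $\mathbf{A}(\delta)$ applied to each block, and independence of the blocks: the pair events contribute $N^2\log^{-2\delta}(n)=\log^{-4\delta/3}(n)$ and the ``huge block'' events $B_i$ (threshold inflated by $\epsilon N$) contribute of the same order. This mechanism is absent from your plan.

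The decorrelation input you propose is also not available. The quantities $X_{n_i}$ and $X_{n_j}$ are boundary counts of the \emph{same} trajectory at two comparable times; they share the bulk of the path and are strongly positively correlated, so $\mathrm{Cov}(X_{n_i},X_{n_j})$ is of the same order as the variance and averaging over times does not reduce variance. The decorrelation exploited in Proposition~\ref{prop:moments_boundary} is between the local indicator events $\mathbf{1}_{A_k(k)}\mathbf{1}_{B_k(n-k)}$ at well-separated indices $k,m$ \emph{inside one} boundary count, not between whole counts at two different times, so ``re-running'' that computation does not yield your covariance bound. Finally, note that the covariance machinery is not needed for Theorem~\ref{thm: Flatto} at all: the paper's argument uses only the first-moment asymptotics \eqref{eq: first moment asymp}, block independence, and the hypothesis $\mathbf{A}(\delta)$; the lack of monotonicity of $|\partial R(n)|$ is the reason the bootstrap is needed in the first place (to upgrade \eqref{eq: conc} to a rate that is summable along a sufficiently dense subsequence), not an obstacle arising inside the proof of Theorem~\ref{thm: Flatto} itself.
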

It follows from \eqref{eq: conc} that for all $0<\delta<1$, the sequence $|\partial R(n)|$ satisfies properties $\mathbf{A}(\delta)$ and $\mathbf{D}(\delta)$.
\begin{corollary}\label{cor: yes}
	For all $\delta>0$, the sequence  $|\partial R(n)|$ satisfies property $\mathbf{A}(\delta)$ and $\mathbf{D}(\delta)$.
    Consequently, taking $\delta=5$, for all $\varepsilon_0>0$ there exists $C=C(\varepsilon_0)$ such that
    for all $\epsilon >\varepsilon_0$ and $n\geq 2$,
	\begin{equation}
	\mathbb{P}\left(\left||\partial R(n)|-\mathbb{E}\left(|\partial R(n)|\right)\right|>\epsilon\mathbb{E}\left(|\partial R(n)|\right) \right)\leq \frac{C}{\epsilon^2 \log^5(n)}.
	\end{equation}
\end{corollary}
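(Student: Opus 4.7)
The plan is to bootstrap the Chebyshev-type bound~\eqref{eq: conc} into arbitrarily large powers of $\log(n)$ by iterating Theorem~\ref{thm: Flatto}. First, using the elementary fact that $\log\log(n)/\log^{1-\delta_0}(n)$ is bounded on $n\geq 2$ for every $\delta_0\in(0,1)$, I would split the two-sided deviation in~\eqref{eq: conc} into its upper and lower tails to conclude that $|\partial R(n)|$ satisfies both $\mathbf{A}(\delta_0)$ and $\mathbf{D}(\delta_0)$ for every fixed $\delta_0\in(0,1)$, with constants depending on $\varepsilon_0$ and $\delta_0$. This is essentially the statement already flagged in the sentence preceding the corollary.

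Next, given an arbitrary target $\delta>0$, I would choose $k\in\mathbb{N}$ with $(4/3)^k\delta_0\geq \delta$ and apply Theorem~\ref{thm: Flatto} iteratively $k$ times: starting from $\mathbf{A}(\delta_0)$ this produces $\mathbf{A}((4/3)^k\delta_0)$. Since on $\{n\geq 2\}$ the bound $1/\log^{\delta'}(n)\leq C(\delta,\delta')/\log^{\delta}(n)^{-1}$ rearranges to give $1/\log^{\delta}(n)\geq c(\delta,\delta')/\log^{\delta'}(n)$ whenever $\delta'\geq\delta$ (the only subtle point being $\log 2<1$, which is absorbed in the multiplicative constant), the weaker property $\mathbf{A}(\delta)$ follows. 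The identical iteration applied to the lower-tail side yields $\mathbf{D}(\delta)$.

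For the stated consequence I would specialise to $\delta=5$ and use the elementary decomposition
\[
\Bigl\{\bigl||\partial R(n)|-\mathbb{E}|\partial R(n)|\bigr|>\epsilon\,\mathbb{E}|\partial R(n)|\Bigr\}\subseteq \bigl\{|\partial R(n)|>(1+\epsilon)\mathbb{E}|\partial R(n)|\bigr\}\cup\bigl\{|\partial R(n)|<(1-\epsilon)\mathbb{E}|\partial R(n)|\bigr\},
\]
then apply $\mathbf{A}(5)$ and $\mathbf{D}(5)$ to each of the two events and sum the resulting bounds, with the final constant being the sum of the two constants produced by the iterative scheme.

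I do not anticipate any genuine obstacle here, since Theorem~\ref{thm: Flatto} packages all the delicate analysis and the starting properties $\mathbf{A}(\delta_0)$, $\mathbf{D}(\delta_0)$ follow directly from the variance estimate in Proposition~\ref{prop:moments_boundary}. The only piece of bookkeeping to monitor is that each application of Theorem~\ref{thm: Flatto} produces a new constant depending only on $\varepsilon_0$ and the previous constant; since only finitely many iterations are needed to reach any prescribed $\delta$, the final constant $C(\varepsilon_0,\delta)$ is automatically finite.
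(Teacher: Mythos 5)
Your proposal is correct and follows essentially the same route as the paper: the base case $\mathbf{A}(\delta_0)$, $\mathbf{D}(\delta_0)$ for $\delta_0\in(0,1)$ comes from the Chebyshev bound~\eqref{eq: conc}, finitely many applications of Theorem~\ref{thm: Flatto} boost the exponent past any prescribed $\delta$ (the paper leaves this iteration implicit, with the monotonicity in $\delta$ a harmless constant adjustment exactly as you note), and the two-sided bound for $\delta=5$ is the union of the $\mathbf{A}(5)$ and $\mathbf{D}(5)$ tails. Apart from a small typo in your rearranged inequality, there is nothing to add.
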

The proof of Theorem \ref{thm: Flatto} is similar to the proof of \cite[thm. 4.2]{Flatto},  hence it is postponed to the appendix.
\begin{proof}[Proof of Thm.~\ref{thm: boundary range}]
	By \eqref{eq: first moment asymp} it is enough to show that
	\[
	Z_n:=\left|\frac{\left|\partial R(n)\right|}{\mathbb{E}\left(\left|\partial R(n)\right|\right)}-1\right|\xrightarrow[n\to\infty]{a.s.}0.
	\]
	Let $\epsilon>0$ and write $N_k:=\left\lfloor\exp\left(\sqrt[4]{k}\right)\right\rfloor$, where $\lfloor\cdot\rfloor$
    is the floor function. By Corollary \ref{cor: yes} there exists $C>0$ such that for all $n\geq 2$,
	\[
	\mathbb{P}\left(Z_{N_k}>\epsilon \right)\leq \frac{C}{\epsilon^2 \log^5(N_k)}\leq \frac{C}{k^{5/4}}.
	\]
	It follows from the Borel-Cantelli lemma that for almost every $w\in\Omega$,
	\[
	\limsup_{k\to\infty}Z_{N_k}\leq \epsilon
	\]
	As $Z_n\geq 0$ and $\epsilon$ is arbitrary it follows that for almost every $w\in\Omega$,
	\[
	\lim_{n\to\infty}Z_{N_k}=0.
	\]
	
Now for a general $n\in\mathbb{N}$ there exists a unique $m=m(n)\in\mathbb{N}$ such that $N_m\leq n <N_{m+1}$. Since $e^x-1\leq 2x$ for all $0\leq x\leq 1$ and  for all $m$ large $\sqrt[4]{m+1}-\sqrt[4]{m}\leq 1/3m^{3/4}$, it follows that
\begin{equation}\label{eq: N_m}
  N_{m+1}-N_m=\exp\left(\sqrt[4]{m}\right)\left[\exp\left(\sqrt[4]{m+1}-\sqrt[4]{m}\right)-1\right]\leq m^{-3/4}N_m.
\end{equation}

In particular, $N_{m+1}/N_m \to 1$. By~\eqref{eq: first moment asymp},
$\mathbb{E}( |\partial R(n)|) \sim C n/(\log^2(n))$ is regularly varying.
Therefore, since $N_m\leq n \leq N_{m+1}$,
\begin{equation}
\label{eq:E_ratio}
\mathbb{E}(|\partial R(n)|) / \mathbb{E}(|\partial R_{N_m}|) \to 1.
\end{equation}

From \eqref{eq: N_m} and the trivial bound
\[
  \left|\partial R(n)\right|-2dm\leq \left|\partial R(n+m)\right|\leq \left|\partial R(n)\right|+2dm
\]
for all $m$ large enough and $N_m\leq n\leq N_{m+1}$, we have
using~\eqref{eq: N_m}
\begin{equation*}
  \frac{\bigl||\partial R(n)| - |\partial R_{N_m}|\bigr|}{\mathbb{E}(|\partial R_{N_m}|)}
  \leq \frac{2d (N_{m+1}-N_m)}{\mathbb{E}(|\partial R_{N_m}|)}
  \leq \frac{2d m^{-3/4}N_m}{C N_m/ \log^2(N_m)}
  \lesssim \frac{m^{1/2}}{m^{3/4}} \to 0.
\end{equation*}
Since $|\partial R_{N_m}|/\mathbb{E}(|\partial R_{N_m}|)$ tends almost surely
to $1$, we obtain that $|\partial R(n)|/\mathbb{E}(|\partial R_{N_m}|)$ also
tends to $1$. Together with~\eqref{eq:E_ratio}, this gives $|\partial
R(n)|/\mathbb{E}(|\partial R(n)|) \to 1$ as required.
\end{proof}
\section{The Range of random walks in the domain of attraction of the symmetric \texorpdfstring{$\alpha$}{alpha}
stable distribution for \texorpdfstring{$1<\alpha\leq
2$}{1<alpha<2}}\label{sec: Folner} Let $\{X_n\}_{n=-\infty}^\infty$ be as
sequence of i.i.d., centered, $\mathbb{Z}$-valued random variables, and
define the two-sided random walk $\{S_n\}_{n\in \mathbb{Z}}$ as follows,
$S_0=0$, and for $n\geq 1$ let $S_n=X_0+\dots +X_{n-1}$ and
\smash{$S_{n}^{(-)} = -X_{-1}-\cdots-X_{-n}$}. We assume that  $\{S_n\}_n$ is
aperiodic in the sense of Section~\ref{sec: boundary Z^2} and that the random
variables $\{X_i\}_i$ belong to the domain of attraction of a nondegenerate,
symmetric, $\alpha$-stable distribution with $1<\alpha\leq 2$. That is, there
exists a positive slowly varying function $L:\mathbb{R}_+\to \mathbb{R}_+$
such that,
$$
Y_n:=\frac{S_n}{n^{\frac{1}{\alpha}}L(n)}\xrightarrow[n\to\infty]{d}Z_\alpha,
$$
where $Z_\alpha$ is a real  random variable with characteristic function $\mathbb{E}(e^{itZ_\alpha})=e^{-|t|^\alpha}$.
By Levy's continuity theorem, writing $\phi(t):=\mathbb{E}(e^{itX_1})$, we see that for all $t>0$,
\[
\rE \left(e^{itY_n}\right)=\phi\left(\frac{t}{n^{\frac{1}{\alpha}}L(n)} \right)^n\xrightarrow[n\to\infty]{} e^{-|t|^\alpha}.
\]
From this and a Tauberian theorem it follows that, see e.g.\ \cite[Theorem~2.6.5]{Ibragimov1975independent},
\[
\phi(t)=1-|t|^\alpha L(1/|t|)\left[1+o\left(1\right)\right], \quad t \to 0.
\]
From now on $L$ will denote a positive slowly varying function which can change from line to line.
If $A\subset \mathbb{Z}$ is a finite subset we define
$$r_n(x,A):=\mathbb{P}^x\left(S_k\notin A, \,\, 1\leq k\leq n\right),$$
and write $r_n:=r_n(0,\{0\})$.

We will need the following result.
\begin{lemma}\label{lem:tauberian}
	Assume that $\{X_i\}_{i \in \mathbb{Z}}$ are aperiodic i.i.d., $\mathbb{Z}$-valued random variables such that
	$1-\phi(t)$ is regularly varying of index $\alpha$ as $t\to 0$, for $\alpha \in (1,2)$.
	Then $r_n$ is regularly varying of index $1/\alpha -1$ as $n\to \infty$. In addition, there exists a positive slowly varying
	function $L$ such that, for any $x\in \mathbb{Z}$ and any finite nonempty $A\subseteq \mathbb{Z}$,
	\begin{equation}\label{eq:general_bound}
	\lim_{n\to \infty} n^{1-1/\alpha} L(n) r_n(x,A) = C(x,A),
	\end{equation}
	for some constant $C(x,A)\geq 0$.
\end{lemma}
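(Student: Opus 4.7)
My plan is to prove the two claims of Lemma~\ref{lem:tauberian} sequentially. For the first claim, I will derive the regular variation of $r_n$ from that of the return probabilities $u_n := \mathbb{P}(S_n=0)$ via a Tauberian argument. The required input is the Gnedenko local limit theorem for aperiodic $\mathbb{Z}$-valued walks in the domain of attraction of a nondegenerate $\alpha$-stable law: if $a_n = n^{1/\alpha}\ell(n)$ are the norming constants for $S_n/a_n \Rightarrow Z_\alpha$, then $u_n \sim f_\alpha(0)/a_n$, so $u_n$ is regularly varying of index $-1/\alpha$. Since the walk is recurrent on $\mathbb{Z}$ (as $\alpha>1$), the first-return decomposition yields the classical identity
\[
U(z)R(z)=\frac{1}{1-z},\qquad U(z)=\sum_{n\geq 0}u_n z^n,\ R(z)=\sum_{n\geq 0}r_n z^n,
\]
obtained from $U=1/(1-F)$ and $R=(1-F)/(1-z)$ where $F$ is the generating function of the first return time.

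From here I would apply Karamata's Tauberian theorem: regular variation of $u_n$ of index $-1/\alpha$ gives $U(z)\sim \Gamma(1-1/\alpha)f_\alpha(0)\,\ell(1/(1-z))/(1-z)^{1-1/\alpha}$ as $z\to 1^-$, whence the identity yields $R(z)\sim C_\alpha(1-z)^{-1/\alpha}/\ell(1/(1-z))$. A second application of Karamata's theorem shows $\sum_{k\leq n}r_k$ is regularly varying of index $1/\alpha$, and since $r_n$ is monotone decreasing, the monotone density theorem upgrades this to regular variation of $r_n$ itself of index $1/\alpha-1$. In particular, there exists a positive slowly varying function $L$ with $n^{1-1/\alpha}L(n)r_n\to 1$, which will serve as the slowly varying function in the statement.

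For the second claim, I will reduce to the first by showing that the ratio $r_n(x,A)/r_n$ converges to a finite nonnegative constant depending only on $(x,A)$. This is precisely the content of \cite[Theorem~4a]{KS63}, invoked already in the proof of Proposition~\ref{prop: return tail} (see equation \eqref{eq:KSlimit}) in the form
\[
\lim_{n\to\infty}\frac{r_n(x,A)}{r_n}=\tilde{g}_A(x),
\]
valid for any finite nonempty $A\subset\mathbb{Z}$ and any $x\in\mathbb{Z}$. Taking $C(x,A):=\tilde{g}_A(x)$ and multiplying this ratio limit by the asymptotic $n^{1-1/\alpha}L(n)r_n\to 1$ from the first step yields $n^{1-1/\alpha}L(n)r_n(x,A)\to C(x,A)$ with the \emph{same} slowly varying function $L$ for all $(x,A)$, which is the required uniformity. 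Nonnegativity of $C(x,A)$ is automatic as it is a limit of probabilities.

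The main obstacle I anticipate is verifying that \cite[Theorem~4a]{KS63} applies in the present $\alpha$-stable regime. The theorem requires aperiodicity and recurrence (both present here), together with an asymptotic regularity hypothesis on the partial sums of $u_n$ of Kesten--Spitzer type. The remark preceding the present lemma, together with the analogous discussion in Lemma~\ref{lem:Q_control} that \cite[Theorem~9]{Kes} extends to any recurrent walk satisfying \cite[Equation~(11.1)]{Kes}, strongly suggests that the hypothesis is satisfied once the local limit theorem is available; still, a careful check of this condition in the slowly varying, non-strongly-aperiodic case is the delicate point. The strong/weak aperiodicity discrepancy can be handled exactly as in Lemma~\ref{lem:Q_control} via the laziness trick.
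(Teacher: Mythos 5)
Your architecture coincides with the paper's in its second half and in its endgame: both arguments pass through the identity $R(\lambda)=(1-\lambda)^{-1}U(\lambda)^{-1}$, deduce regular variation of $R(\lambda)$ as $\lambda\to 1^-$, use monotonicity of $r_n$ to transfer this to $r_n$ itself, and then obtain \eqref{eq:general_bound} from \cite[Theorem~4a]{KS63}, exactly as you propose (so the second claim and the uniformity of $L$ in $(x,A)$ are handled identically). The genuine difference is where the asymptotics of $U(\lambda)$ come from. The paper stays on the Fourier side: it writes $U(\lambda)=\frac{1}{2\pi}\int_{-\pi}^{\pi}\frac{\rd t}{1-\lambda\phi(t)}$, replaces $|t|^\alpha L(1/|t|)$ by a monotone asymptotically equivalent $g$, and runs a Stieltjes/Laplace--Karamata argument on $\int_0^\epsilon \rd t/(\delta+g(t))$; this is done precisely to cover slowly varying $L$ with no limit at infinity (the case where $L$ has a limit being delegated to \cite[Theorem~8]{Kes}). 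You instead work in the time domain, feeding a local limit theorem for $u_n=\mathbb{P}(S_n=0)$ into Karamata's Tauberian theorem. In principle both are legitimate: Gnedenko's lattice LLT does hold with general slowly varying norming, so your route outsources the hard analysis to a classical theorem, while the paper's is self-contained on the characteristic-function side.

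There is, however, a concrete flaw in your first step as written. Under the lemma's hypothesis of aperiodicity in Spitzer's sense (not strong aperiodicity), the claim $u_n\sim f_\alpha(0)/a_n$ is false in general: a symmetric walk supported on the odd integers with suitable tails is aperiodic, yet $\mathbb{P}(S_n=0)=0$ for all odd $n$, so $u_n$ is not regularly varying and the LLT gives $u_n\sim d\,f_\alpha(0)/a_n$ only along the admissible arithmetic progression. Your proposed repair, ``the laziness trick exactly as in Lemma~\ref{lem:Q_control},'' does not transfer as stated: there the time change yields only an \emph{upper} bound, whereas here you need exact two-sided asymptotics, and for the event $\{\tilde S_k\neq 0,\ 1\leq k\leq n\}$ the lazy walk sits at the origin until its first genuine jump, so one gets $\tilde r_n=\rho\,\mathbb{E}\bigl[r_{1+\mathrm{Bin}(n-1,\rho)}\bigr]$ rather than a clean time change, and inverting this requires an additional concentration-plus-monotonicity sandwich. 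The simplest fix within your route avoids laziness altogether: Karamata's Tauberian theorem for $U(z)$ only needs regular variation of the partial sums $\sum_{k\leq n}u_k$, and these are regularly varying of index $1-1/\alpha$ under mere aperiodicity (sum the LLT along the admissible progression, in the spirit of \eqref{eq:partialgreen}). With that substitution --- and correcting the bookkeeping slip that the slowly varying factor should appear in the denominator of the $U$ asymptotics, since $u_n\sim f_\alpha(0)/(n^{1/\alpha}\ell(n))$ --- your argument closes and yields the lemma with the same $L$ for all $(x,A)$.
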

\begin{proof}
When $\lim_{s\to \infty} L(s)$ exists this result is \cite[Theorem~8]{Kes}.
So we will consider the case where $L(\cdot)$ does not have a limit at
infinity. Similarly to the proof of \cite[Theorem~8]{Kes} we define for
$\lambda \in [0,1)$
\begin{align*}
U(\lambda) &:= \sum_{n=0}^\infty \lambda^n \rP(S_n=0) = \frac{1}{2\pi} \int_{-\pi}^{\pi} \frac{\rd t}{1-\lambda \phi(t)},\\
R(\lambda) &:= \sum_{n=0}^\infty \lambda^n r_n = (1-\lambda)^{-1} U(\lambda)^{-1}.
\end{align*}
First we will study the asymptotic behaviour of $U(\lambda)$ as $\lambda \to 1$.

Notice that since $|t|^\alpha L(1/t)$ is regularly varying at the origin, by
\cite[Theorem~1.5.3]{Bingham_89} there exists a monotone, $\alpha$-regularly
varying function $g(t)$, such that $g(t) \sim C|t|^\alpha L(1/t)$ as $t \to
0$, for some $C>0$. Therefore $\phi(t)=1-g(t)\left[1+o\left(1\right)\right]$,
as $t\to 0$. Next, we use aperiodicity to concentrate on the behaviour around
the origin. In particular for any $\epsilon>0$, there exists a constant
$C(\epsilon)>0$ such that $|\phi(t)|<1-C(\epsilon)$ for all $|t|>\epsilon$.
The main contribution will come from $|t|<\epsilon$. We claim that the
function
\begin{equation*}
  \tilde{U}_\epsilon(\delta) := \int_{0}^{\epsilon} \frac{\rd t}{\delta +g(t)}
\end{equation*}
is slowly varying at $0$ with index $1/\alpha-1$, the asymptotics not
depending on $\epsilon$. Let us first see how this claim implies the result.
We write
\begin{multline*}
  U(\lambda) = \frac{\tilde U_\epsilon(1-\lambda)}{\pi}
  + \frac{1}{2\pi}\left(\int_{0}^{\epsilon} \frac{\rd t}{1-\lambda\phi(t)} - \tilde U_\epsilon(1-\lambda)\right)
  \\
  + \frac{1}{2\pi}\left(\int_{-\epsilon}^{0} \frac{\rd t}{1-\lambda\phi(t)} - \tilde U_\epsilon(1-\lambda)\right)
  + \frac{1}{2\pi} \int_{\pi>|t|>\epsilon} \frac{\rd t}{1-\lambda \phi(t)}.
\end{multline*}
We claim that the term $\tilde U_\epsilon(1-\lambda)$ on the right dominates
the other ones when $\lambda$ tends to $1$. First, it dominates the last one
as $\tilde U_\epsilon(1-\lambda)$ tends to infinity while the last term
remains bounded. The other two terms are similar, let us handle the first
one. We write the difference as
\begin{equation*}
  \int_{0}^{\epsilon} \frac{\rd t}{1-\lambda\phi(t)} - \tilde U_\epsilon(1-\lambda)
  = \int_{0}^{\epsilon} \left(\frac{1}{1-\lambda\phi(t)} - \frac{1}{1-\lambda + g(t)}\right) \rd t
  = \int_{0}^{\epsilon} \frac{g(t)-\lambda (1-\phi(t))}{(1-\lambda\phi(t))(1-\lambda + g(t))} \rd t.
\end{equation*}
When $\lambda$ is close enough to $1$, the numerator is bounded by
$\eta(\epsilon) g(t)$, where $\eta$ tends to $0$ with $\epsilon$. Canceling
with the first factor of the denominator, it follows that this term is
bounded by $\eta(\epsilon) \tilde U_\epsilon(1-\lambda)$, as desired.

The regular variation of $\tilde U_\epsilon$ then implies that $U(\lambda)$
is regularly varying of index $1/\alpha-1$. Thus $R(\lambda) \sim
C(1-\lambda)^{-1/\alpha} L(1/(1-\lambda))$, and since $r_n$ is monotone it
follows that $r_n\sim c n^{1/\alpha -1} L(n)$ for some constants $c, C>0$.
Having established this, \eqref{eq:general_bound} follows from
\cite[Theorem~4a]{KS63}.

\medskip

It remains to prove the claim that $\tilde U_\epsilon(\delta)$ is slowly
varying at $0$ with index $1/\alpha-1$. Since $g$ is $\alpha$-regularly
varying and monotone,  its inverse $f(u):= g^{-1}(u)$ will be regularly
varying of index $1/\alpha$ and monotone. Next, letting $t=g^{-1}(z)$ we have
\begin{align*}
\tilde{U}_\epsilon(\delta)
&= \int_{z=0}^{g(\epsilon)} \frac{\rd g^{-1}(z)}{\delta +z}= \int_{z=0}^{\infty} \frac{\rd g_\epsilon^{-1}(z)}{\delta +z},
\end{align*}
interpreting the integral in the Stieltjes sense, and letting
$g_\epsilon^{-1}(s):=g^{-1}(s)$ for $s< g(\epsilon)$ and
$g_\epsilon^{-1}(s):=g^{-1}(g(\epsilon))$ for $s \geq g(\epsilon)$. With this
definition it is clear that $g^{-1}_\epsilon$ is also monotone and that
$g^{-1}_\epsilon(s)\sim g^{-1}(s)$ as $s\to 0+$. In particular its behaviour
near the origin is independent of $\epsilon$.

The rest is fairly similar to the proof of \cite[Theorem~1.7.4]{Bingham_89}. First notice that
\begin{align*}
\tilde{U}_\epsilon(\delta)
&= \int_{z=0}^{\infty} \frac{\rd g_\epsilon^{-1}(z)}{\delta +z}\\
&= \int_{z=0}^{\infty}\int_{u=0}^\infty \re^{-(\delta+z)u} \rd g^{-1}_\epsilon(z)\rd u\\
&=\int_{u=0}^\infty \re^{-\delta u} \int_{z=0}^{\infty}\re^{-z u} \rd g^{-1}_\epsilon(z)\rd u\\
&=\int_{u=0}^\infty \re^{-\delta u} V_\epsilon(u) \rd u,
\end{align*}
where
$$V_\epsilon(u):= \int_{z=0}^{\infty}\re^{-z u} \rd g_\epsilon^{-1}(z).$$
By~\cite[Theorem~1.7.1']{Bingham_89} we have that since $g_\epsilon^{-1}(z)$
is regularly varying of index $1/\alpha$ at the origin, we have that
$V_\epsilon(u)$ is regularly varying of index $-1/\alpha$ as $u\to\infty$. In
turn this implies that
$$W_\epsilon(u):= \int_0^u V_\epsilon(s)\rd s,$$
is regularly varying of index $1-1/\alpha$ as $u\to \infty$ and since
\begin{align*}
\tilde{U}_\epsilon(\delta)
&=\int_{u=0}^\infty \re^{-\delta u} V_\epsilon(u) \rd u
=\int_{u=0}^\infty \re^{-\delta u} W_\epsilon(\rd u),
\end{align*}
by Karamata's Tauberian theorem \cite[1.7.1]{Bingham_89} we have that
$\tilde{U}_\epsilon(\delta)$ is regularly varying of index $1/\alpha -1$ as
$\delta \to 0$.
\end{proof}
\begin{remark}
An alternative approach is possible using
\begin{equation}
 	\left|\mathbb{P}\left(S_n=m\right)- \mathbb{P}\left(S_n=0\right)\right| =O\left( \frac{1}{a_n^2}\right) ,\ \ as\ n\to\infty
\end{equation}
where $m\in \mathbb{Z}$ and $a_n$ is a $1/\alpha$ regularly varying sequence, given in \cite{GJP84}.
\end{remark}

\begin{theorem}\label{thm: alpha stable}
	Let $S_n$ be an aperiodic random walk satisfying \eqref{eq:general_bound} with $\alpha\in (1,2]$, then for all $\epsilon>0$, almost surely
	\[
	\frac{\left|\partial R_n\right|}{\left|R_n\right|}= o\left(n^{\frac{1}{\alpha}-1+\epsilon}\right).
	\]
\end{theorem}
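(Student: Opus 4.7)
The plan is to control the numerator and denominator of $|\partial R_n|/|R_n|$ separately and then combine them via Markov-type bounds along a sparse subsequence.

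\emph{Expectation of the boundary.} Using the last-visit decomposition as in the proof of Proposition~\ref{prop:range_of_Birkhoff}, for $v\in\{\pm 1\}$ one has
\[
\rE |\partial_v R(n)| = \sum_{k=1}^n \rP\!\left(S_l \neq v,\ 1\leq l\leq k\right)\cdot \rP\!\left(S_l \notin \{0,v\},\ 1\leq l\leq n-k\right).
\]
Lemma~\ref{lem:tauberian} tells us each factor is regularly varying of index $1/\alpha-1$, so a standard convolution estimate for regularly varying sequences yields $\rE |\partial_v R(n)| \leq C\, n^{2/\alpha-1}/L(n)^2$ for some slowly varying $L$. Since $\partial R(n) \subseteq \partial_1 R(n)\cup\partial_{-1} R(n)$, this gives $\rE|\partial R(n)| \leq n^{2/\alpha - 1 + \epsilon/4}$ for $n$ large.

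\emph{Lower bound on the range.} Again by Lemma~\ref{lem:tauberian}, $\rE|R(n)|=\sum_{k=0}^{n-1} r_k \sim c\, n^{1/\alpha}/L(n)$. To upgrade this to an almost-sure lower bound, I would combine a Paley--Zygmund or Chebyshev argument, relying on a second-moment estimate of Jain--Pruitt type for the range of stable-index random walks, with the monotonicity of $n\mapsto |R(n)|$. Good lower bounds along a sparse subsequence thus propagate by monotonicity to all $n$, yielding $|R(n)|\geq c' n^{1/\alpha - \epsilon/4}$ almost surely eventually.

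\emph{Combining via Markov and Borel--Cantelli.} From the first-moment estimate and Markov's inequality,
\[
\rP\bigl(|\partial R(n)| > n^{2/\alpha-1+\epsilon/2}\bigr) \lesssim n^{-\epsilon/4}.
\]
Choose a subsequence $n_k$ for which these probabilities are summable; Borel--Cantelli then gives $|\partial R(n_k)| \leq n_k^{2/\alpha - 1 + \epsilon/2}$ eventually almost surely. The crude Lipschitz-type estimate $\bigl||\partial R(n)|-|\partial R(m)|\bigr| \leq C|n-m|$ extends this bound to general $n$ provided the subsequence is dense enough. Combined with the almost-sure lower bound on $|R(n)|$, this yields $|\partial R(n)|/|R(n)| \leq n^{1/\alpha - 1 + \epsilon}$ almost surely eventually, as required.

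\emph{Main obstacle.} The hardest step is balancing the summability of the Borel--Cantelli tail sum (which favours sparse $n_k$) against the density required for Lipschitz interpolation: since $2/\alpha - 1 < 1$, the gap $n_{k+1}-n_k$ must be much smaller than $n_k^{2/\alpha-1+\epsilon/2}$, whereas summability of a first-moment Markov bound with geometric $n_k=2^k$ makes the gap of order $n_k$. For small $\epsilon$ these two demands conflict, so I expect to need either a variance refinement for $|\partial R(n)|$ in the spirit of Proposition~\ref{prop:moments_boundary}, or an $L^p$ moment bound with $p$ large enough to accommodate a polynomially dense subsequence $n_k=k^c$ with $c$ arbitrary.
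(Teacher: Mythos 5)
There are two genuine gaps, and they sit exactly at the two places where the real work of the proof lies.

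First, your almost-sure lower bound on $|R_n|$ cannot be obtained by a Chebyshev/Paley--Zygmund second-moment argument. For $d=1$ and $1<\alpha\leq 2$ the range is \emph{not} concentrated: by Le Gall--Rosen, $|R_n|/a_n$ converges in distribution to the nondegenerate random variable $\mathrm{Leb}\left(W_\alpha[0,1]\right)$, so $\mathrm{Var}(|R_n|)$ is of the same order as $\left(\rE|R_n|\right)^2$. The second-moment lower-tail bound for the event $\left\{|R_n|<n^{1/\alpha-\epsilon/4}\right\}$ is then $O(1)$, not summable along any subsequence, and Paley--Zygmund only yields a probability bounded away from zero, not tending to one; the Jain--Pruitt-type variance gains you invoke are a feature of higher-dimensional or transient ranges and fail in the one-dimensional recurrent case. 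The paper's Proposition~\ref{prop: range} circumvents this by a block argument: split $[0,n]$ into $\lfloor n/t_n\rfloor$ independent blocks of length $t_n=\lfloor n^{1-\alpha\epsilon/2}\rfloor$, use the Le Gall--Rosen limit \eqref{Ro-Legall} together with $\rP\left(\mathrm{Leb}(W_\alpha[0,1])>0\right)=1$ to get $\rP\left(|R_n|<\delta n^{1/\alpha-\epsilon}\right)\leq \rho^{\lfloor n/t_n\rfloor}$ with $\rho<1$, which is summable over all $n$, so Borel--Cantelli gives the lower bound directly (no monotonicity or interpolation needed).

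Second, for the numerator your own ``main obstacle'' paragraph correctly diagnoses that a first-moment Markov bound cannot be interpolated for small $\epsilon$ (and $\alpha$ near $2$), and that one needs a high-moment bound --- but that bound \emph{is} the core of the proof, and your proposal leaves it unproved. In the paper this is Proposition~\ref{prop: moment}: for every $k$, $\rE\left(|R_n\setminus(R_n-x)|^k\right)\leq M n^{k(2/\alpha-1+\epsilon)}$, proved by decomposing the $k$-fold sum over well-separated last-visit times, replacing the avoidance constraints by constraints over disjoint time windows (so that independence applies), invoking the regular-variation bound \eqref{eq:general_bound} via Lemma~\ref{lem:tauberian} for each factor, and finishing with a convolution estimate for regularly varying sequences. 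With $k$ chosen large enough, Markov's inequality gives a bound $\lesssim n^{-2}$ valid for every $n$, so Borel--Cantelli applies along the full sequence and the whole subsequence-plus-Lipschitz-interpolation machinery becomes unnecessary. As written, your argument establishes neither of the two estimates on which the conclusion rests.
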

\begin{remark}
	This theorem, in the case of i.i.d. random variables with $\mathbb{E}\left(X_1\right)=0$ and $\mathbb{E}\left(X_1^2\right)=D<\infty$ gives the rate $o(n^{p})$ for every $p<\frac{1}{2}$. We claim that this is the optimal rate in the polynomial exponent. Indeed, for the simple random walk on $\mathbb{Z}$, $\left|\partial R_n\right|=2$ for all $n$, and $\frac{R_n}{\sqrt{n}}$ converges in distribution to $Y=\mathrm{Leb}\left(W[0,1]\right)$ where $W$ is the Wiener process and $W[0,1]=\left\{W_s:\ s\in [0,1]\right\}$. As the limiting random variable $Y$ is unbounded, Theorem \ref{thm: alpha stable} is optimal in the polynomial exponent.
\end{remark}

\begin{proposition}\label{prop: moment}
	Let $S_n$ be an aperiodic random walk satisfying \eqref{eq:general_bound} with $\alpha\in (1,2]$, then for all $\epsilon>0$ and $k\in\mathbb{N}$
	there exists $M>0$ such that for all $x\in\mathbb{Z}\setminus \left\{0\right\}$,
	\[
	\mathbb{E}\left(\left|R_n\setminus\left(R_n-x\right))\right|^k\right)
	\leq M n^{k\left(\frac{2}{\alpha}-1+\epsilon\right)}.
	\]
\end{proposition}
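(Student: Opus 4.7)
The plan is to mimic the first-moment computation from the proof of Proposition~\ref{prop:moments_boundary}, but with a careful independence argument that allows all $k$ factors to be controlled simultaneously. First I would write
\[
|R_n \setminus (R_n - x)| = \sum_{j=1}^n I_j,
\]
where $I_j$ is the indicator that $j$ is the last visit to $S_j$ in $[1,n]$ and that $S_j + x \notin R(n)$. Exactly as in the proof of Proposition~\ref{prop:moments_boundary}, $I_j$ factors as $I_j^+ \cdot I_j^-$, where $I_j^+$ depends only on the increments $X_{j+1},\dots,X_n$ (the forward event that the walk avoids $\{S_j,S_j+x\}$ on $(j,n]$) and $I_j^-$ depends only on $X_1,\dots,X_j$ (the backward event that the walk avoids $S_j+x$ on $[1,j-1]$), and the two pieces are independent.

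For the $k$-th moment I would expand
\[
\mathbb{E}\bigl(|R_n \setminus (R_n - x)|^k\bigr) = \sum_{j_1,\dots,j_k = 1}^n \mathbb{E}[I_{j_1}\cdots I_{j_k}],
\]
and group the terms by the number $m \leq k$ of distinct indices. Contributions with $m<k$ are absorbed by induction on $k$: they are bounded by lower moments, and $n^{m(2/\alpha-1+\epsilon)} \leq n^{k(2/\alpha-1+\epsilon)}$ since $2/\alpha-1+\epsilon \geq 0$ for $\alpha \leq 2$ and $\epsilon>0$. It therefore suffices to bound the ordered sum $\sum_{j_1 < \cdots < j_k} \mathbb{E}[I_{j_1}\cdots I_{j_k}]$.

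For an ordered tuple, set $j_0 = 0$, $j_{k+1} = n$, and split each gap $(j_i, j_{i+1}]$ at its midpoint $M_i$. The key observation is that $I_{j_i}^+$ is implied by the weaker event that the walk avoids $\{S_{j_i}, S_{j_i}+x\}$ on the left half $(j_i, M_i]$, while $I_{j_i}^-$ is implied by the event that the walk avoids $S_{j_i}+x$ on the right half $(M_{i-1}, j_i]$. The $2k$ resulting half-events depend on pairwise disjoint blocks of increments, hence are jointly independent. Translation invariance together with Lemma~\ref{lem:tauberian} (and a classical local limit estimate for the boundary case $\alpha=2$) then bound each halved probability by $C(j_{i+1}-j_i)^{1/\alpha-1+\epsilon}$, resp.\ $C(j_i-j_{i-1})^{1/\alpha-1+\epsilon}$.

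Setting $\delta_i = j_i - j_{i-1}$ and $a = 1/\alpha - 1 + \epsilon$, all that remains is the Dirichlet-type estimate
\[
\sum_{\delta_1+\cdots+\delta_{k+1} \leq n,\ \delta_i \geq 1} \delta_1^a \delta_{k+1}^a \prod_{i=2}^k \delta_i^{2a} \lesssim n^{2ka+k} = n^{k(2/\alpha - 1 + 2\epsilon)},
\]
after which one relabels $2\epsilon$ as $\epsilon$. The main technical hurdle will be the careful bookkeeping required to certify joint independence of all $2k$ half-events at once; the Dirichlet summation itself is standard, provided the exponents $a$ and $2a$ remain above $-1$, which is automatic for $\alpha \leq 2$ once $\epsilon>0$ is fixed.
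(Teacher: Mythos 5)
Your plan is essentially the paper's own argument: the same forward/backward decomposition of the indicator, the same midpoint-splitting trick to manufacture joint independence of the $2k$ half-events, the bound coming from Lemma~\ref{lem:tauberian} (i.e.\ from~\eqref{eq:general_bound}), and the same convolution estimate on exponents above $-1$. One noteworthy organizational difference: the paper separates tuples where \emph{some pair of indices is within distance $2$} into a set $\Lambda_k(n)$ and bounds their contribution by $10\binom{k}{2}$ times the $(k-1)$-th moment, restricting the midpoint-split/Dirichlet-sum argument to $\Delta_k(n)$ where all gaps are $\geq 3$ so that each $q_j=\lfloor(m_j-m_{j-1})/2\rfloor\geq 1$; you instead induct only on the number of \emph{distinct} indices and absorb close-but-distinct tuples directly into the Dirichlet sum. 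Your route works, but you should make the boundary case explicit: when $\delta_i\in\{1,2\}$ the half-event has duration $0$ or $1$, so the probability bound $C\,\delta_i^{1/\alpha-1+\epsilon}$ holds only after choosing $C$ large enough to cover the trivial bound by $1$ (this is what the paper's $\Delta_k(n)$ restriction sidesteps). With that bookkeeping spelled out, the two proofs are the same in substance.
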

\begin{proof}
Let $\epsilon>0$ and $j\in\mathbb{Z}\setminus\{0\}$. In the course of the proof $C$ will denote a global positive constant whose value can change (increase) from line to line. As before we have the bound
\[
\left|R_n\setminus \left(R_n-x\right)\right|\leq \sum_{j=1}^n \mathbf{1}_{A_n(j)}\circ T^j
\]
where
\[
A_n(m)=\left\{w\in \Omega: S_j\neq x,\ \ \forall j\in [-m,n-m] \right\} = D^{-}_{x}(m) \cap D^{+}_{x}(n-m),
\]
where for $y\in \mathbb{Z}$ and $n\geq 0$ we define
$$D^{-}_{y}(n):= \{\omega\in \Omega: S_j^{(-)}\neq y, \, j\in [1,n]\}, \quad  D^{+}_{y}(n):= \{\omega\in \Omega: S_j^{}\neq y, \, j\in [1,n]\}.$$
Note that $D_x^-$ and $D_x^+$ are independent, thus
\[
	\mathbb{P}\left(A_n(m)\right)=
	\mathbb{P} \left( D^{-}_{x}(m) \right) \mathbb{P} \left( D^{+}_{x}(n-m) \right).
\]
As $Z_\alpha$ is symmetric,  $X_1$ is in the domain of attraction of
$Z_\alpha$ if and only if $-X_1$ is in the domain of attraction of
$Z_\alpha$.  This together with Lemma~\ref{lem:tauberian} implies that there
exist positive, slowly varying functions $L_+$, $L_{-}$  such that
\[
\mathbb{P}\left(D^{\pm}_x(n)\right)\leq \frac{L_{\pm}(n)}{n^{1-1/\alpha} },
\]
for $n\geq 1$, potentially absorbing any constants in the functions $L_\pm(\cdot)$.

We will now show that for all $k\in\mathbb{N}$ there exists $C>0$ such that,
\[
\mathbb{E}\left(\left|R_n\setminus\left(R_n-x\right))\right|^k\right)\leq C\left[ n^{k\left(\frac{2}{\alpha}-1+\epsilon\right)}+5k^2\mathbb{E}\left(\left|R_n\setminus\left(R_n-x\right)\right|^{k-1}\right)\right].
\]
The latter inequality proves the proposition. Writing
\[
\Delta_k(n):=\left\{\left(m_1,m_2,..,m_k\right)\in \left\{1,\ldots,n\right\}^k :\ \forall 1<l\leq k,\ m_l-m_{l-1}\geq 3  \right\},
\]
and
\[
\Lambda_k(n):=\left\{ \left(m_1,m_2,..,m_k\right)\in \left\{1,\ldots,n\right\}^k :\ \exists 1<l<l'\leq k,\ \left|m_l-m_{l'}\right| \leq 2  \right\},
\]
Then
\[
\left|R_n\setminus\left(R_n-x\right))\right|^k\leq k! \sum_{\left(m_1,...,m_k\right)\in\Delta_k(n)}\prod _{l=1}^k\mathbf{1}_{A_{m_l}\left(n\right)}(k)\circ T^{m_l}+k!\sum_{\Lambda_k(n)}\prod _{l=1}^k\mathbf{1}_{A_{m_l}\left(n\right)}\circ T^{m_l}
\]
For every $\left(m_1,...,m_k\right)\in \Lambda_k(n)$ there exists a minimal
$1\leq \mathbf{l}<k$ such that there exists $l'>\mathbf{l}$ with
$\left|m_{\mathbf{l}}-m_{l'}\right|\leq 2$. Since
\[
\prod _{l=1}^k\mathbf{1}_{A_{m_l}\left(n\right)}\circ T^{m_l}\leq \prod _{ l\in\{1,..k\}\setminus\{\mathbf{l}\}}\mathbf{1}_{A_{m_l}\left(n\right)}\circ T^{m_l},
\]
by a simple counting argument, we can see that
\[
\mathbb{E}\left(\sum_{\Lambda_k(n)}\prod _{l=1}^k \mathbf{1}_{A_{m_l}\left(n\right)}\circ T^{m_l}\right)\leq 10\binom{k}{2}\mathbb{E}\left(\left|R_n\setminus\left(R_n-x\right))\right|^{k-1}\right).
\]
It remains to bound the other term. For $\left(m_1,...,m_k\right)\in
\Delta_k(n)$, letting $q_1=m_1$, $q_{k+1}=n-m_k$ and for $2\leq j\leq k$,
$q_j=\lfloor(m_j-m_{j-1})/2]\rfloor$, where $\lfloor x\rfloor $ denotes the
integer part of $x$, we deduce the inequality
\[
\prod _{l=1}^k\mathbf{1}_{A_{m_l}\left(n\right)}\circ T^{m_l}
\leq  \mathbf{1}_{D_x^-\left(m_{1}\right)}\circ T^{m_1} \left(\prod_{l=2}^{k}  \big[\mathbf{1}_{D_x^+\left(q_l\right)}\circ T^{m_{l-1}} \mathbf{1}_{D_x^-\left(q_{l}\right)}\circ T^{m_l}\big] \right)
\mathbf{1}_{D_x^+\left(q_{k+1}\right)}\circ T^{m_{k}}
\]
by replacing the restriction that $S_j+x \neq S_{m_l}$ for $j\in [1,n]$ by
the weaker one that $S_j+x \neq S_{m_l}$ for $j\in [m_{l-1}+ q_{l},m_{l}+
q_{l+1}]$; see Figure~\ref{fig1} where the random walk started at the marked
point is constrained to not visit $x$ for the time indicated by the arrows.
\begin{figure}[t]
	\includegraphics[scale=.6]{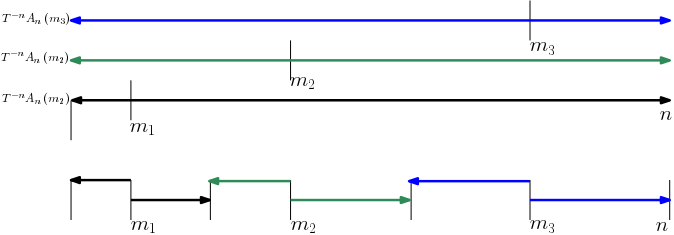}
	\caption{\label{fig1}}
\end{figure}
The bound above is essentially a product of events that depend on
non-overlapping sequences of the random variables $\{X_j\}_{j\in \mathbb{Z}}$
and therefore by independence we have that
\begin{align*}
\mathbb{E}\left( \prod _{l=1}^k\mathbf{1}_{A_{m_l}\left(n\right)}\circ T^{m_l}\right)
&\leq \mathbb{P}\left\{D_x^-\left(q_{1}\right)\right\}
\prod _{l=2}^k \left[\mathbb{P}\big\{D_x^{+}\left(q_l\right)\big\} \mathbb{P}\left\{D_x^-\left(q_{l}\right)\right\}\right] \mathbb{P}\big\{D_x^+\left(q_{k+1}\right)\big\}\\
&\leq C\prod_{l=2}^{k+1} \frac{L_{-} (q_{l-1})}{(q_{l-1})^{1-1/\alpha}}\frac{L_{+} (q_{l})}{(q_{l})^{1-1/\alpha}}.
\end{align*}
In particular since $L_{\pm}(\cdot)$ are slowly varying, for any $\epsilon>0$ we can find a positive constant $C>0$ such that $L_{\pm}(n) \leq C n^{\epsilon}$ and thus
\begin{align*}
\mathbb{E}\left( \prod _{l=1}^k\mathbf{1}_{A_{m_l}\left(n\right)}\circ T^{m_l}\right)
&\leq C\prod_{l=2}^{k+1} \frac{C}{(q_{l-1} q_l)^{1-1/\alpha-\epsilon}}\\
&= C \left[q_1 q_{k+1} \right]^{1/\alpha -1 +\epsilon} \prod_{l=2}^k (q_{l})^{2/\alpha -2 +2\epsilon}.
\end{align*}
Therefore,
\begin{align*}
\mathbb{E}\left(\sum_{\Delta_k(n)}\prod _{l=1}^k\mathbf{1}_{A_{m_l}\left(n\right)}\circ T^k\right)
&\leq C \sum_{q_1,\dotsc,q_{k+1}} [q_1 q_{k+1}] ^{\frac{1}{\alpha}-1+\epsilon}\prod _{l=2}^{k} q_l^{\frac{2}{\alpha}-2+2\epsilon}.
\end{align*}
The sum is restricted to the values of $q_i$ that can be produced by the
above process. They satisfy $q_1+2q_2+\dotsb+2q_k+q_{k+1} \leq n$ and
$q_1+2q_2+\dotsb+2q_k+q_{k+1} \geq n-k$, because of the integer parts in the
definition of $q_j$. The convolution of two sequences which are
$O(n^{\alpha_1})$ and $O(n^{\alpha_2})$ with $\alpha_1,\alpha_2>-1$ is
$O(n^{1+\alpha_1+\alpha_2})$, and the exponent is again $>-1$. In the above
sum, the exponents are respectively $1/\alpha-1+\epsilon$ and
$2/\alpha-2+2\epsilon$. As $\alpha \in (1,2]$, they are $>-1$. Therefore, one
can apply recursively this estimate on convolution of sequences, and deduce
that the above sum is $O(n^\rho)$ for
\begin{equation*}
  \rho = k + 2 \cdot \left(\frac{1}{\alpha} - 1+\epsilon\right) + (k-1) \cdot \left(\frac{2}{\alpha}-2 + 2 \epsilon\right)
  = k \cdot\left(\frac{2}{\alpha}-1 + 2\epsilon\right).
\end{equation*}
The result follows with $\epsilon'=2\epsilon$.
\end{proof}

\begin{proposition}\label{prop: range}
Let $S_n$ be a random walk satisfying \eqref{eq:general_bound} with
$\alpha\in (1,2]$. Then for all $\epsilon>0$, almost surely
\[
\lim_{n\to\infty}\frac{\left|R_n\right|}{n^{\frac{1}{\alpha}-\epsilon}}=\infty.
\]
\end{proposition}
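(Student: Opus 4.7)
The plan is to reduce the proposition to a bound on self-intersections via Cauchy--Schwarz. Writing $N_n(x) := \#\{1 \le k \le n : S_k = x\}$ for the occupation times, we have $\sum_{x \in \mathbb{Z}} N_n(x) = n$ and $|R_n| = \#\{x : N_n(x) \ge 1\}$, so Cauchy--Schwarz gives
\[
  n^2 = \Big(\sum_{x \in R_n} N_n(x)\Big)^2 \le |R_n| \cdot I_n, \qquad I_n := \sum_{x \in \mathbb{Z}} N_n(x)^2 = \#\{(k,\ell) \in [1,n]^2 : S_k = S_\ell\}.
\]
Thus $|R_n| \ge n^2/I_n$, and it suffices to bound $I_n$ from above almost surely.

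For the expectation we have
\[
  \mathbb{E} I_n = \sum_{k,\ell=1}^{n}\mathbb{P}(S_{|\ell-k|}=0) \le 2n \sum_{j=0}^{n-1}\mathbb{P}(S_j = 0).
\]
By the analysis in the proof of Lemma~\ref{lem:tauberian} together with the renewal identity $R(\lambda)U(\lambda)(1-\lambda) = 1$, the generating function $U(\lambda) = \sum_n \lambda^n\mathbb{P}(S_n=0)$ is regularly varying of index $1/\alpha - 1$ as $\lambda \to 1^-$. Karamata's Tauberian theorem then yields a slowly varying function $\tilde L$ with $\sum_{j=0}^{n-1}\mathbb{P}(S_j=0) \le Cn^{1-1/\alpha}\tilde L(n)$, and hence $\mathbb{E} I_n \le C n^{2-1/\alpha}\tilde L(n)$.

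Almost-sure control then comes from Borel--Cantelli along a geometric subsequence, combined with the monotonicity $R_m \subseteq R_n$ for $m \le n$. Setting $n_k := 2^k$, Markov's inequality gives $\mathbb{P}\big(I_{n_k} \ge k^2\, n_k^{2-1/\alpha}\tilde L(n_k)\big) \le C/k^2$, which is summable. Thus almost surely, for all $k$ large enough,
\[
  |R_{n_k}| \ge \frac{n_k^{2}}{I_{n_k}} \ge \frac{n_k^{1/\alpha}}{k^2\,\tilde L(n_k)}.
\]
For arbitrary $n \in [n_k, n_{k+1}]$, monotonicity and $n_{k+1} = 2 n_k$ yield
\[
  \frac{|R_n|}{n^{1/\alpha - \epsilon}} \ge \frac{|R_{n_k}|}{n_{k+1}^{1/\alpha - \epsilon}} \gtrsim \frac{n_k^{\epsilon}}{k^2\,\tilde L(n_k)} \xrightarrow[k\to\infty]{} \infty,
\]
since $\tilde L$ is slowly varying and therefore grows more slowly than any positive power.

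The main technical obstacle is justifying the Karamata--Tauberian step for $\sum_{j \le n}\mathbb{P}(S_j=0)$ when the summand is not monotone, for instance when the walk is merely aperiodic rather than strongly aperiodic. I would handle this either by invoking the local limit theorem for walks in the domain of attraction of a symmetric stable law, which gives $\mathbb{P}(S_j=0) \sim c\, j^{-1/\alpha}\tilde L(j)$ along the support lattice, or by the laziness coupling already used in the proof of Lemma~\ref{lem:Q_control} to reduce to the strongly aperiodic case; either route is fairly routine given the groundwork of Section~\ref{sec: Folner}.
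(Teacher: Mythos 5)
Your argument is correct, but it follows a genuinely different route from the paper. The paper proves Proposition~\ref{prop: range} by quoting the Le~Gall--Rosen weak limit \eqref{Ro-Legall} for $|R_n|/a_n$, using absolute continuity of the occupation measure of the stable process to get $\rP(\mathrm{Leb}(W_\alpha[0,1])>0)=1$, and then running a block decomposition of $[0,n]$ into $\lfloor n/t_n\rfloor$ independent stretches of length $t_n=\lfloor n^\kappa\rfloor$, which makes $\rP(|R_n|<\delta n^{1/\alpha-\epsilon})$ decay like $\rho^{\lfloor n/t_n\rfloor}$ and hence summable. Your route instead bounds $|R_n|\ge n^2/I_n$ by Cauchy--Schwarz on the occupation times, controls $\rE I_n\lesssim n\sum_{j\le n}\rP(S_j=0)\lesssim n^{2-1/\alpha}\tilde L(n)$, and finishes with Markov, Borel--Cantelli along $n_k=2^k$, and monotonicity of $R_n$; the polynomial slack $n^{-\epsilon}$ absorbs both the $k^2$ and the slowly varying factors, so a first-moment bound on self-intersections suffices. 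This is more elementary and self-contained: it avoids the external inputs \eqref{Ro-Legall} and the local-time/absolute-continuity fact entirely, needing only the return-probability asymptotics already present in Section~\ref{sec: Folner}, whereas the paper's block argument yields exponentially small lower-deviation probabilities (a stronger intermediate estimate than your polynomial one), which is more than is needed for the stated almost-sure conclusion. One remark: the obstacle you flag at the end is not really one. Karamata's Tauberian theorem in the direction you use (from the generating function $U(\lambda)$ to the partial sums $\sum_{j\le n}\rP(S_j=0)$) requires only nonnegativity of the coefficients, not monotonicity, so no local limit theorem or laziness coupling is needed; alternatively, the last-exit decomposition $1=\sum_{j=0}^n\rP(S_j=0)\,r_{n-j}\ge r_n\sum_{j=0}^n\rP(S_j=0)$ gives $\sum_{j\le n}\rP(S_j=0)\le 1/r_n$, and the required upper bound then follows directly from the regular variation of $r_n$ established in Lemma~\ref{lem:tauberian} (i.e.\ from the hypothesis \eqref{eq:general_bound} with $x=0$, $A=\{0\}$).
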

\begin{proof}
Let $\epsilon>0$. Le Gall and Rosen have shown in \cite{LeR} that there exists a $1/\alpha$-regularly varying sequence $a_n$ such that
\begin{equation}\label{Ro-Legall}
\frac{\left|R_n\right|}{a_n}\xrightarrow[n\to\infty]{dist.} \mathrm{Leb}_\mathbb{R}\left(W_\alpha[0,1]\right),
\end{equation}
where $W_\alpha[0,1]$ is the range of the symmetric $\alpha$-stable Levy motion up to time $1$. It is well known, see for example \cite{B64}, that the occupation measure of a one-dimensional $\alpha$-stable processes, for $\alpha>1$, defined by
\[
\mu (A):=\int_0^1 \mathbf{1}_A\circ W_sds
\]
is almost surely absolutely continuous with respect to Lebesgue measure. \footnote{Equivalently almost surely possess a continuous local time $x\mapsto L_\alpha(1,x)$} As $\mu(W_\alpha[0,1])=1$ this implies that
\begin{equation}\label{eq:barlow}
\rP\Big\{\mathrm{Leb}_\mathbb{R}\left(W_\alpha[0,1]\right)>0\Big\}=1.
\end{equation}

Since $a_n$ is $1/\alpha$-regularly varying, setting $t_n:=\lfloor n^\kappa
\rfloor$, where $\kappa>0$, we have that $a_{t_n}$ is
$\kappa/\alpha$-regularly varying. Let $\kappa = 1-\alpha\epsilon/2$, so that
$\kappa/\alpha>1/\alpha-\epsilon$. Then for $n$ large enough we have
$a_{t_n}>n^{1/\alpha -\epsilon}$. Decompose the interval $[0,n]\cap
\mathbb{Z}$ into sub intervals $[jt_n,(j+1)t_n]$, $j=0,1,..,\left\lfloor
{n}/{t_n}\right\rfloor$ of length $t_n$ plus perhaps a remainder interval
which will be ignored. Writing $R(n,m)=\left\{S_{n+1},..,S_m\right\}$, for
$\delta>0$ and all $n$ large enough we have that
\begin{align*}
\mathbb{P}\left(\left|R_n\right|<\delta n^{\frac{1}{\alpha}-\epsilon}\right)
&\leq \mathbb{P}\Big(\max_{j\in\{1,..,\left\lfloor \frac{n}{t_n} \right\rfloor\} } \left|R\left(jt_n,(j+1)t_n\right)\right|< \delta a_{t_n} \Big)\\
&\leq  \mathbb{P}\left(\left|R_{t_n}\right| < \delta a_{t_n}\right)^{\left\lfloor \frac{n}{t_n}\right\rfloor}.
\end{align*}
By \eqref{eq:barlow} we can choose $\delta>0$ small enough so that
$\mathbb{P}\left(\mathrm{Leb}_\mathbb{R}\left(W_\alpha[0,1]\right)<\delta\right)<1$ and by \eqref{Ro-Legall} we can choose $N_0$ large enough so that for all $n\geq N_0$ we have
\[
\mathbb{P}\left(\left|R_{t_n}\right|< \delta a_{t_n}\right)\leq \rho <1.
\]
Therefore for all $n>N_0$, from the definition of $t_n$ and the above it follows that
\[
\mathbb{P}\left(\left|R_n\right|<\delta n^{\frac{1}{\alpha}-\epsilon}\right) \leq \rho^{\left\lfloor \frac{n}{t_n}\right\rfloor},
\]
and since $n/t_n \sim n^{\kappa'}$ where $\kappa'=\epsilon \alpha /2$, an application of the Borel-Cantelli Lemma shows that
\[
\varliminf_{n\to\infty}\frac{\left|R_n\right|}{n^{\frac{1}{\alpha}-\epsilon}}\geq \delta.
\]
As $\epsilon >0$ is arbitrary the conclusion follows.
\end{proof}
\begin{proof}[Proof of Theorem \ref{thm: alpha stable}]
Fix $\delta>0$. By Proposition \ref{prop: range}, it remains to show that
\begin{equation}\label{eq: bdry}
\lim_{n\to\infty}\frac{\left|\partial R_n\right|}{n^{\frac{2}{\alpha}-1+\delta}}=0.
\end{equation}
First note that
\[ |\partial R_n|=|R_n\setminus (R_n-1)|+| R_n\setminus (R_n+1)|=V_n(1)+V_n(2).
\]
Let $i\in\{1,2\}$. By Proposition \ref{prop: moment} for any $\epsilon>0$ and
$k\in \mathbb{N}$ there exists $M>0$ such that for all $n\in\mathbb{N}$ and
$i\in \{1,2\}$,
\[
\mathbb{E}\left(\left(\frac{V_n(i)}{n^{\frac{2}{\alpha}-1+\delta}}\right)^k\right)\leq M n^{k(\epsilon-\delta)}.
\]
Therefore, choosing $\epsilon<\delta$, there exist $k\in\mathbb{N}$ and $M>0$
such that for all $n\in\mathbb{N}$ and $i\in \{1,2\}$,
\[
\mathbb{E}\left(\left(\frac{V_n(i)}{n^{\frac{2}{\alpha}-1+\delta}}\right)^k\right)\leq M n^{-2}.
\]
A standard use of Markov's inequality and the Borel Cantelli Lemma shows that for any $\delta>0$
almost surely
\begin{equation}\label{eq: tedious}
\lim_{n\to\infty}\frac{V_n(1)}{n^{\frac{2}{\alpha}-1+\delta}}=\lim_{n\to\infty}\frac{V_n(2)}{n^{\frac{2}{\alpha}-1+\delta}}=0.
\end{equation}
Therefore we have that almost surely for all $\delta>0$ small enough
$$\lim_{n\to\infty}\frac{\left|\partial R_n\right|}{\left|R_n\right| n^{\frac{1}{\alpha}-1 + \delta}}=\lim_{n\to\infty}\frac{\left|\partial R_n\right|/n^{\frac{2}{\alpha} -1 + \delta/2}}{\left|R_n\right|/n^{\frac{1}{\alpha} -\delta/2}}=0,
$$
which proves the theorem.
\end{proof}
\appendix

\section{Proof of Theorem \ref{thm: Flatto} via Flatto's inequality enhancement procedure}
Assume that $\mathbf{A}(\delta)$ holds for $|\partial R(n)|$. Fix $\epsilon_0>0$ and denote by $\kappa>0$ the unique constant, see Proposition~\ref{prop:moments_boundary}, so that
\[
\mathbb{E}\left(|\partial R(n)|\right) \sim \frac{\kappa n}{\log^2(n)}\ \text{as}\ n\to\infty.
\]
For $n\in \mathbb{N}$ we will write $N=N(n)=\left\lfloor
\log^{\delta/3}(n)\right\rfloor$. For $1\leq i\leq N$, write $n_i=\lfloor
ni/N \rfloor$ and divide the range $R(n)$ into $N$-blocks,
\[
X_{n,i}:=\left\{S_{n_{i-1}+1},S_{n_{i-1}+2},\ldots,S_{n_i}\right\},\ \ 1\leq i\leq N.
\]
As before
\[
\partial_v  X_{n,i}:=X_{n,i}\setminus \left\{X_{n,i}+v\right\}, \qquad
\partial X_{n,i} = \bigcup_{v\in\mathds{E}_d} \partial_v X_{n,i}.
\]
Clearly
\[
|\partial R(n)|\leq \sum_{i=1}^N |\partial X_{n,i}|.
\]
Let $\epsilon\geq \epsilon_0$ and set
\[
A_i:=\left\{\omega\in \Omega:\  |\partial  X_{n,i}|\geq \left(1+\frac{\epsilon}{2}\right)\frac{\kappa n}{N\log^2(n)} \right\}
\]
and
\[
B_i:=\left\{\omega\in \Omega:\  |\partial  X_{n,i}|\geq \left(1+\frac{\epsilon N}{2}\right)\frac{\kappa n}{N\log^2(n)} \right\},
\]
for $1\leq i\leq N$. Then a simple combinatorial argument (see equation (4.9)
in \cite{Flatto}) gives,
\begin{equation}\label{eq: 0th Flatto}
\left[|\partial R(n)|>(1+\epsilon) \frac{\kappa n}{\log^2(n)}\right]\subset \left(\bigcup_{1\leq i<j\leq N} A_i\cap A_j\right)\cup \left(\bigcup_{i=1}^N B_i\right)
\end{equation}
First we estimate $\mathbb{P}\left(A_i\right)$, $\mathbb{P}\left(B_i\right)$
from above. Writing  $m_i=m_i(n)=n_{i}-n_{i-1}$ for $1\leq i\leq N$,
$|\partial X_{n,i}|$ is equal in  distribution to $\left|\partial
R\left(m_i\right)\right|$. In addition,
\[
\lim_{n\to \infty } \frac{m_i/\log^2\left(m_i\right)}{n/(N\log^2(n))}=1
\]
uniformly in $1\leq i\leq N$. In addition, by \eqref{eq: first moment asymp}, since $m_i\to \infty$ as $n\to \infty$,
\[
\lim_{n\to\infty}\frac{\mathbb{E}\left(\left|\partial  R({m_i})\right|\right)}{\kappa m_i/\log^2\left(m_i\right)}=1.
\]
Consequently there exists $\mathbf{n}=\mathbf{n}(\epsilon_0)$, such that for all $\epsilon>\epsilon_0$ and $n>\mathbf{n}$,
\begin{equation}\label{eq: 1st Flatto}
A_i \subset \left[ |\partial  X_{n,i}|\geq \left(1+\frac{\epsilon}{3}\right)\frac{\kappa m_i}{\log^2(m_i)}\right]\subset \left[ |\partial  X_{n,i}|\geq \left(1+\frac{\epsilon}{4}\right)\mathbb{E}\left(\left|\partial R(m_i)\right|\right)\right].
\end{equation}
We deduce that for all $\epsilon>\epsilon_0$, $n>\mathbf{n}$ and $1\leq i\leq N$,
\begin{align*}
\mathbb{P}\left(A_i\right)&\leq  \mathbb{P}\left(|\partial  X_{n,i}|>\left(1+\frac{\epsilon}{4}\right)\mathbb{E}\left(\left|\partial R({m_i})\right|\right)\right) \\
&= \mathbb{P}\left(|\partial  R(m_i)|>\left(1+\frac{\epsilon}{4}\right)\mathbb{E}\left(\left|\partial  R(m_i)\right|\right)\right),\ \ \ \text{by property } \mathbf{A}(\delta)\\
&\leq \frac{16C\left(\epsilon_0/4,\delta\right)}{\epsilon^2\log^\delta \left(m_i\right)}.
\end{align*}
Finally, as $\log^\delta (m_i)\sim \log^\delta(n)$ as $n\to \infty$, we can
enlarge $\mathbf{n}$ such that for all $\epsilon>\epsilon_0$, and
$n>\mathbf{n}$,
\begin{equation}\label{eq: 2nd Flatto}
\mathbb{P}\left(A_i\right)\leq \frac{32C\left(\epsilon_0/4,\delta\right)}{\epsilon^2\log^\delta \left(n\right)}
\leq \frac{32C\left(\epsilon_0/4,\delta\right)}{\epsilon_0 \epsilon \log^\delta \left(n\right)}
\end{equation}
To bound $\mathbb{P}(B_i)$ from above notice that by similar considerations as in \eqref{eq: 1st Flatto}, for all $\epsilon>\epsilon_0$ and $n>\mathbf{n}$,
\[
B_i\subset \left[|\partial  X_{n,i}|\geq \left(1+\frac{\epsilon N}{4}\right)\mathbb{E}\left(\left|\partial R(m_i)\right|\right)\right],
\]
consequently for all $1\leq i\leq N,$
\begin{equation*}
\mathbb{P}\left(B_i\right)\leq \mathbb{P}\left(|\partial  R_{m_i}|\geq \left(1+\frac{\epsilon N}{4}\right)\mathbb{E}\left(\left|\partial R_{m_i}\right|\right)\right)\leq \frac{16C\left(\epsilon_0/4,\delta\right)}{\epsilon^2N^2\log^\delta(m_i)}.
\end{equation*}
Now as $N\sim \log^{\delta/3}(n)$ as $n\to\infty$, by enlarging $\mathbf{n}$ if needed, we can assume that for all $n>\mathbf{n}$ uniformly on $1\leq i\leq N$ and $\epsilon>\epsilon_0$,
\begin{equation}\label{eq: 3rd Flatto}
\mathbb{P}\left(B_i\right)\leq \frac{32C(\epsilon_0/4,\delta)}{\epsilon^2 N \log^{4\delta/3}(n)}.
\end{equation}
Since for $1\leq i< j\leq N$, the events $A_i$ and $A_j$ are independent it follows from \eqref{eq: 0th Flatto}, \eqref{eq: 2nd Flatto} and \eqref{eq: 3rd Flatto} that for all $\epsilon>\epsilon_0$ and $n>\mathbf{n}$,
\begin{align*}
\mathbb{P}\left(|\partial R(n)|>(1+\epsilon) \mathbb{E}\left(|\partial  R(n)|\right) \right) &\leq \sum_{1\leq i<j\leq N}\mathbb{P}(A_i)\mathbb{P}(A_j)+\sum_{k=1}^N \mathbb{P}(B_i)\\
&\leq \left(\frac{32C(\epsilon_0/4, \delta)}{\epsilon_0}\right)^2\frac{N^2}{\epsilon^2 \log^{2\delta}(n)}+\frac{32C(\epsilon_0/4,\delta)}{\epsilon^2  \log^{4\delta/3}(n)}\\
&\sim \left(\left(\frac{32C(\epsilon_0/4, \delta)}{\epsilon_0}\right)^2+32C(\epsilon_0/4,\delta)\right)\frac{1}{\epsilon^2  \log^{4\delta/3}(n)}
\end{align*}
as $n\to \infty$. It follows that  there exists $C(\epsilon_0,4\delta/3)$ such that for all $\epsilon\geq \epsilon_0$ and  $n\geq 2$
\[
\mathbb{P}\left(|\partial R(n)|>(1+\epsilon) \mathbb{E}\left(|\partial  R(n)|\right) \right)\leq \frac{C(\epsilon_0,4\delta/3)}{\epsilon^2\log^{4\delta/3}(n)}.
\]
As $\epsilon_0$ is arbitrary this concludes the proof of Theorem \ref{thm: Flatto}.

\section{Green functions tend to \texorpdfstring{$0$}{0} at infinity}
\label{appendix:Green}

In this appendix, we study the decay at infinity of the Green function of
transient random walks on groups. The setting is the same as in
Paragraph~\ref{subsec:transient_RW}: we consider a probability measure $p$ on
a group $\mathsf{G}$, and let $S_n = \xi_0\cdots \xi_{n-1}$ be the
corresponding random walk, where $\xi_i$ are i.i.d.~random variables
distributed like $p$. When this random walk is transient, the Green function
$G(g)$ of the walk is defined as in~\eqref{eq:define_Green}, by $G(g) =
\sum_{k=0}^\infty p_n(\id_{\mathsf{G}},g)$, where $p_n(\id_{\mathsf{G}}, g) =
\mathbb{P}\left(S_n=g\right)$. More generally, let $G(g, h) = G(g^{-1}h)$.
This is the average time that the walk starting from $g$ spends at $h$.

We say that $p$ is \emph{admissible} if the random walk can reach any point
in the group, i.e., the semigroup generated by the support of $p$ is the
whole group. This is a natural non-degeneracy assumption.

The main result of this appendix is the following theorem.

\begin{theorem}
\label{thm:Green_vanishes}
Let $p$ be an admissible probability measure on a finitely generated group
$\mathsf{G}$. Assume that $p$ defines a transient random walk, and that
$\mathsf{G}$ is not virtually cyclic. Then the Green function $G(g)$ tends to
$0$ when $g$ tends to infinity.
\end{theorem}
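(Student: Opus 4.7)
The plan is to argue by contradiction: suppose there exist $c > 0$ and a sequence $g_n \to \infty$ in $\mathsf{G}$ with $G(g_n) \geq c$ for all $n$. By Lemma~\ref{lem: Green}, this is equivalent to $F(g_n) := \mathbb{P}(\exists k \geq 1,\, S_k = g_n) \geq cq$ for all $n$, where $q = q(\id_\mathsf{G}) > 0$. The objective is to use both harmonic-function and end-structure tools to contradict non-virtual-cyclicity.

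The first step is to extract a bounded positive harmonic function on $\mathsf{G}$. A one-step decomposition of $G$ gives $G(g) = \sum_h p(h) G(h^{-1}g)$ for $g \neq \id_\mathsf{G}$; equivalently, $G$ is left-harmonic for the step distribution $\check p(h) := p(h^{-1})$ away from the identity. The right-shifted functions $\psi_n(g) := G(g g_n)$ inherit this harmonicity everywhere except at the single point $g_n^{-1}$, are uniformly bounded by $G(\id_\mathsf{G}) = 1/q$, and satisfy $\psi_n(\id_\mathsf{G}) = G(g_n) \geq c$. Since $g_n^{-1} \to \infty$, a diagonal extraction of pointwise subsequential limits produces a function $\psi \colon \mathsf{G} \to [0, 1/q]$ that is $\check p$-harmonic on \emph{all} of $\mathsf{G}$, with $\psi(\id_\mathsf{G}) \geq c > 0$.

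The second step combines $\psi$ with the strong Markov property. Since $\psi$ is bounded and $\check p$-harmonic, the process $\psi(\tilde S_n)$ is a bounded non-negative martingale along the $\check p$-walk $\tilde S_n$ started at $\id_\mathsf{G}$; it converges almost surely to a limit $\psi_\infty$ with $\mathbb{E}[\psi_\infty] = \psi(\id_\mathsf{G}) \geq c$. Since $\check p$ is transient whenever $p$ is, $\tilde S_n \to \infty$ almost surely, so on a set of positive probability, $\psi(\tilde S_n) \to \psi_\infty > 0$ while $\tilde S_n \to \infty$. In parallel, the elementary inequality $F(gh) \geq F(g)F(h)$ (strong Markov at the hitting time of $g$) shows that the set $A := \{g : F(g) \geq cq\}$ is infinite and satisfies $A \cdot A \subseteq \{g : F(g) \geq (cq)^2\}$, a near-semigroup structure.

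The main obstacle is deducing from the above that $\mathsf{G}$ must be virtually cyclic. My plan is to split according to the end structure of $\mathsf{G}$, using Hopf's theorem that a finitely generated infinite group has $1$, $2$, or infinitely many ends and has exactly $2$ iff it is virtually cyclic. For groups with infinitely many ends, I would apply Stallings' decomposition of $\mathsf{G}$ as an amalgamated product or HNN-extension over a finite subgroup and use the induced action on the Bass--Serre tree: the $\check p$-walk a.s.\ visits infinitely many distinct ends, which together with a reflection/rerooting argument forces $\psi_\infty = 0$ almost surely, contradicting $\mathbb{E}[\psi_\infty] \geq c$. For one-ended groups, the argument would exploit super-linear volume growth combined with a Harnack-type inequality (propagating the lower bound on $\psi$ from the identity to a uniform lower bound on balls along the asymptotic trajectory of $\tilde S_n$) to contradict the $L^\infty$ bound on $\psi$. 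In each case the non-virtually-cyclic hypothesis is precisely what rules out the obvious $\mathbb{Z}$-type counterexamples, for which the Green function of a drifted walk stays bounded below in the direction of drift.
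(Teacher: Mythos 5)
Your opening reduction (translating $G(g)\geq c$ into hitting probabilities via Lemma~\ref{lem: Green}) and your first extraction step are fine: the shifts $\psi_n(g)=G(gg_n)$ are indeed bounded by $G(\id_\mathsf{G})$, harmonic off the single point $g_n^{-1}$, and a diagonal argument produces a bounded harmonic limit $\psi$ with $\psi(\id_\mathsf{G})\geq c$. But the argument stalls exactly where you flag ``the main obstacle,'' and the planned routes do not close the gap. The core problem is that the mere existence of a bounded positive harmonic function $\psi$ with $\psi(\id_\mathsf{G})\geq c$ is \emph{never} a contradiction: constant functions are harmonic for every random walk on every group, so $\psi\equiv c$ always exists. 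What you would need is an additional property of your specific $\psi$ coming from the fact that it arises as an unnormalized limit of Green kernels (roughly, a Martin-kernel-type statement), and you have not isolated what that property is. Without it, the martingale convergence step produces true statements about $\psi(\tilde S_n)\to\psi_\infty>0$ that are perfectly consistent with $\psi$ being constant, and thus cannot feed a contradiction. Concretely: in the one-ended case, a Harnack inequality \emph{transfers} a lower bound $\psi(g)\geq c'$ from one ball to a nearby one, but this does not make $\psi$ blow up, so it cannot contradict $\|\psi\|_\infty\leq G(\id_\mathsf{G})$; and in the infinitely-many-ends case, ``visits infinitely many distinct ends \dots forces $\psi_\infty=0$'' is not established --- indeed it is false for $\psi$ constant --- so the Stallings/Bass--Serre machinery as invoked has nothing to push against.

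For comparison, the paper avoids the Martin-boundary route entirely. It first disposes of the virtually $\mathbb{Z}^2$ case by quoting Spitzer's decay result for aperiodic transient walks on $\mathbb{Z}^2$ and lifting it to finite-index extensions by Harnack (Lemma~\ref{lem:Z2}); on all remaining groups the \emph{simple} random walk is transient, for which symmetry gives $p_{2n}(\id_\mathsf{G},g)\leq p_{2n}(\id_\mathsf{G},\id_\mathsf{G})$ and hence decay of $G_S$ (Lemma~\ref{lem:symmetric_0}). The key step (Lemma~\ref{lem:bounded_below_on_S}) then compares the given walk to the simple one via Varopoulos's quadratic-form inequality $\sum f(g)G(g,h)f(h)\leq c^{-1}\sum f(g)G_S(g,h)f(h)$, and derives a contradiction from the assumption $G(g_n)\geq\epsilon G(\id_\mathsf{G})$ by testing against the indicator of a carefully chosen set $\{h_0,\dots,h_{N-1}\}$ of far-apart points with prescribed, pairwise-compatible hitting trajectories, which forces the left side to grow like $N^2$ while the right side is only $O(N) + o(1)\cdot N^2$. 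If you want to keep a harmonic-function flavour, you would need to identify what extra structure on $\psi$ (beyond boundedness, positivity, harmonicity) actually forbids $\psi$ from being bounded away from zero, and that is precisely the content one would otherwise obtain from the comparison/second-moment argument.
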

The condition that $\mathsf{G}$ is not virtually cyclic is necessary for the
theorem: in $\mathbb{Z}$, the Green function of a non-centered random walk
with finite first moment does not tend to $0$ at infinity, by the renewal
theorem (and this statement can be extended to virtually cyclic groups).

This theorem is rather weak, in the sense that it is not quantitative. Much
stronger results are available on specific classes of groups,
see~\cite{Woess}. For instance, when $\mathsf{G}$ is non-amenable, then the
probabilities $p_n(\id_{\mathsf{G}},x)$ tend to zero exponentially fast,
uniformly in $x$, from which the result follows readily. On amenable groups,
for symmetric walks with finite support or more generally a second moment,
one can sometimes use isoperimetric techniques to obtain much stronger
results, specifying the speed of decay of
$p_n(\id_{\mathsf{G}},\id_{\mathsf{G}})$ and of the Green function at
infinity. However, in general, there is no hope to get a quantitative version
of Theorem~\ref{thm:Green_vanishes} as we have made no moment assumption
(think of the case where $p$ is chosen so that $\xi_1$ is at distance
$2^{2^n}$ of $\id_{\mathsf{G}}$ with probability $1/n^2$ -- then the Green
function decays at most like $1/\log\log d(\id_{\mathsf{G}},x)$). Moreover,
there are surprisingly few tools that apply in all classes of groups,
regardless of their geometry.

Most groups only carry transient random walks. Indeed, the only groups on
which there are recurrent random walks are the groups which are virtually
cyclic or virtually $\mathbb{Z}^2$, see~\cite[Theorem 3.24]{Woess}. In the
proof of the theorem, we will have to separate the case where $\mathsf{G}$ is
virtually $\mathbb{Z}^2$. Let us start with this case.

\begin{lemma}
\label{lem:Z2} Assume that $\mathsf{G}$ is virtually $\mathbb{Z}^2$, and that
the admissible probability measure $p$ on $\mathsf{G}$ defines a transient
random walk. Then its Green function tends to $0$ at infinity.
\end{lemma}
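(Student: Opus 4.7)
The plan is to reduce Lemma~\ref{lem:Z2} to the case $\mathsf{G}=\mathbb{Z}^2$ and then invoke classical potential-theoretic and Fourier-analytic results on Green functions of random walks in the plane.

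First, since $\mathsf{G}$ is virtually $\mathbb{Z}^2$, I would pass to a finite-index normal subgroup $H\cong\mathbb{Z}^2$ and consider the induced walk $\tilde S_n$ on $H$ obtained by sampling $S_n$ at its successive return times to $H$. Admissibility on $\mathsf{G}$ plus the ergodicity of the projected walk on the finite quotient $\mathsf{G}/H$ makes $\tilde S_n$ admissible on $\mathbb{Z}^2$, and transience is inherited. A standard coset decomposition then gives a bound of the form $G_\mathsf{G}(g)\lesssim G_H(\pi(g))$, where $\pi$ chooses a representative in $H$, reducing everything to the case $\mathsf{G}=\mathbb{Z}^2$.

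For an admissible transient $p$ on $\mathbb{Z}^2$, I would split into subcases. If $p$ has a finite mean $\mu\neq 0$, the two-dimensional Ney--Spitzer renewal theorem gives $G(g)=O(|g|^{-1})$ and in particular $G(g)\to 0$. If $p$ is centered with a finite first moment, Chung--Fuchs forces $p$ to have infinite variance (otherwise the walk is recurrent, contradicting transience); in this case Fourier inversion $G(g)=\frac{1}{(2\pi)^2}\int_{[-\pi,\pi]^2}\frac{e^{-\ri\langle g,t\rangle}}{1-\phi(t)}\,\rd t$ together with Riemann--Lebesgue produces $G(g)\to 0$, once $1/(1-\phi)\in L^1$ has been checked. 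If $p$ has infinite first moment, an analogous Fourier/local-limit argument applies, the characteristic function having similar regularity at the origin.

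The main obstacle is the Fourier integrability step in the non-symmetric, centered case: Chung--Fuchs transience directly supplies only $\int(1-\Re\phi)/|1-\phi|^2\,\rd t<\infty$, from which $1/(1-\phi)\in L^1$ is not automatic. A cleaner route I would favour is a martingale argument. The function $\tilde G(g):=G(-g)$ satisfies the defect-harmonicity identity $\tilde G-L\tilde G=\delta_0$ with $Lf(g)=\rE_g[f(S_1)]$, so $M_n:=\tilde G(S_n)+\sum_{k<n}\mathbf{1}_{[S_k=0]}$ is a non-negative, $L^1$-bounded martingale whose expectation equals $\tilde G(0)=G(0)<\infty$. Uniform integrability then yields $\tilde G(S_n)\to 0$ almost surely. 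To upgrade this pathwise statement to $\tilde G(g)\to 0$ for every sequence $g\to\infty$, one exploits admissibility: the harmonic identity $\tilde G(g)=\sum_h p(h)\tilde G(g+h)$ on $\mathbb{Z}^2\setminus\{0\}$ forces $\tilde G(g_n+h)\to 0$ for every $h\in\operatorname{supp}(p)$ whenever $\tilde G(g_n)\to 0$, and iteration together with the fact that the semigroup generated by $\operatorname{supp}(p)$ is all of $\mathbb{Z}^2$, combined with Harnack-type bounds for positive harmonic functions of admissible walks, propagates the vanishing to all directions at infinity.
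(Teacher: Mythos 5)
Your reduction to $\mathsf{G}=\mathbb{Z}^2$ (finite-index normal $H\cong\mathbb{Z}^2$, induced walk at return times, then pulling the decay back to all of $\mathsf{G}$) is essentially the paper's argument, except that the asserted bound $G_{\mathsf G}(g)\lesssim G_H(\pi(g))$ is not a ``standard coset decomposition'': what one actually uses is that the Green functions of $p$ and of the induced measure $p_H$ \emph{coincide} on $H$, together with Harnack's inequality $G(g_1)\le C^{d(g_1,g_2)}G(g_2)$ and the fact that every point lies within bounded distance of $H$. That part is fixable. The real divergence is in the $\mathbb{Z}^2$ case itself: the paper simply quotes Spitzer's classical theorem that an aperiodic transient random walk on $\mathbb{Z}^2$ has Green function vanishing at infinity (admissibility gives aperiodicity), whereas you attempt to reprove it, and your proof has a genuine gap.

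Concretely, your case analysis does not close: Ney--Spitzer requires much more than a finite nonzero mean, and you yourself concede that $1/(1-\phi)\in L^1$ is not automatic, so everything rests on the martingale route. There, two problems arise. First, identifying the a.s.\ limit of $\tilde G(S_n)$ as $0$ is not a consequence of ``uniform integrability''; one should instead note $\tilde G(g)=G(0)\,\mathbb{P}_g(\text{hit }0)$ and use transience via L\'evy's $0$--$1$ law (minor, fixable). Second, and fatally, the upgrade from ``$\tilde G(S_n)\to0$ along a.s.\ trajectories'' to ``$\tilde G(g)\to0$ as $g\to\infty$ in every direction'' does not follow from the tools you invoke. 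The identity $\tilde G(g)=\sum_h p(h)\tilde G(g+h)$ propagates vanishing only \emph{forward} along the support, with a constant $\ge \prod p(h_i)^{-1}$ that blows up with the word length of the displacement, and Harnack gives $\tilde G(g)\le C^{d(g,\Gamma)}\tilde G(\text{nearest point of }\Gamma)$ with $C>1$, where $\Gamma$ is the trajectory. For a transient walk with drift, points opposite to, or far transverse to, the drift lie at unbounded distance from $\Gamma$ and are reached only by unboundedly long words, so neither mechanism yields any bound there; nothing in your argument controls $G$ on those sequences. This is exactly the difficulty that the paper's Appendix B is designed to circumvent (by symmetry plus the Varopoulos comparison in the general case, and by citing Spitzer's $\mathbb{Z}^2$ theorem here), so as written your proof of the planar case is incomplete; citing \cite[24.P.5]{Spitzer} at that point would repair it.
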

\begin{proof}
Assume first that $\mathsf{G} = \mathbb{Z}^2$. Then the convergence to $0$ at
infinity of the Green function is \cite[24.P.5]{Spitzer}, as $p$ is
admissible and therefore aperiodic.

Assume now that $\mathsf{G}$ has a finite index subgroup $H$ which is
isomorphic to $\mathbb{Z}^2$. Replacing $H$ with the intersection of its
(finitely many) conjugates, one can even assume that $H$ is normal in
$\mathsf{G}$. The measure $p$ induces a measure on the group $\mathsf{G}/H$,
which defines a recurrent walk as $\mathsf{G}/H$ is finite. In particular,
almost every trajectory of the random walk returns to $H$. The distribution
of this first return is an admissible probability measure $p_H$ on $H$, to
which one can apply the previous result: its Green function tends to $0$ at
infinity. Moreover, the Green functions of $p$ and $p_H$ coincide on $H$ as
the trajectories of the random walk associated to $p_H$ can be obtained from
the trajectories of the random walk for $p$ by restricting to the times where
the walk is in $H$. It follows that $G(g)$ tends to $0$ when $g$ tends to
infinity along $H$.

By Harnack's inequality \cite[25.1]{Woess}, there exists a constant $C$ such
that, for all $g_1,g_2\in \mathsf{G}$, one has $G(g_1) \leq C^{d(g_1, g_2)}
G(g_2)$. As $H$ has finite index in $\mathsf{G}$, every point of $\mathsf{G}$
is within uniformly bounded distance of $H$. Therefore, the convergence to
$0$ of the Green function along $H$ extends to the whole group.
\end{proof}

To prove Theorem~\ref{thm:Green_vanishes}, we can therefore restrict to the
case where the simple random walk is transient. We will use its symmetry to
obtain the decay at infinity of its Green function. Then, we will compare a
general random walk to the simple random walk.

\begin{lemma}
\label{lem:symmetric_0}
Assume that a symmetric random walk on a finitely generated group $\mathsf{G}$ is transient.
Then its Green function tends to $0$ at infinity.
\end{lemma}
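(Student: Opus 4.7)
The strategy is to estimate the tail of the Green function uniformly in $g$, using only symmetry and transience, and to combine this with the pointwise decay of any fixed $p_n(\id_{\mathsf{G}}, \cdot)$ at infinity. Concretely, for $N \in \mathbb{N}$ to be chosen later, I would write
\begin{equation*}
G(g) = \sum_{n<N} p_n(\id_{\mathsf{G}}, g) + \sum_{n\geq N} p_n(\id_{\mathsf{G}}, g),
\end{equation*}
and prove separately that the second term tends to $0$ as $N \to \infty$ \emph{uniformly} in $g$, while for each fixed $N$ the first term, being a finite sum of probability mass functions, lies in $\ell^1(\mathsf{G})$ and therefore vanishes as $g$ leaves any finite set.

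The key analytic input is the uniform bound $p_n(\id_{\mathsf{G}}, g) \leq p_{2\lfloor n/2\rfloor}(\id_{\mathsf{G}}, \id_{\mathsf{G}})$, valid for every symmetric random walk. For even $n = 2m$, the reversibility $p_m(h,g) = p_m(g,h)$ together with Cauchy--Schwarz gives
\begin{equation*}
p_{2m}(\id_{\mathsf{G}}, g) = \sum_h p_m(\id_{\mathsf{G}}, h)\, p_m(g, h) \leq \Bigl(\sum_h p_m(\id_{\mathsf{G}}, h)^2\Bigr)^{1/2} \Bigl(\sum_h p_m(g, h)^2\Bigr)^{1/2},
\end{equation*}
and each factor equals $\sqrt{p_{2m}(\id_{\mathsf{G}}, \id_{\mathsf{G}})}$ after using $\sum_h p_m(x,h)^2 = p_{2m}(x,x)$ and the translation invariance $p_{2m}(g,g) = p_{2m}(\id_{\mathsf{G}}, \id_{\mathsf{G}})$. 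For odd $n = 2m+1$, the same splitting yields $p_{2m+1}(\id_{\mathsf{G}}, g)^2 \leq p_{2m}(\id_{\mathsf{G}}, \id_{\mathsf{G}}) \cdot p_{2m+2}(\id_{\mathsf{G}}, \id_{\mathsf{G}})$, and the monotonicity $\|P^{m+1} \delta_{\id_{\mathsf{G}}}\|_2 \leq \|P^m \delta_{\id_{\mathsf{G}}}\|_2$ (immediate from self-adjointness of the Markov operator $P$ on $\ell^2(\mathsf{G})$ with $\|P\|\leq 1$) closes the bound.

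With this in hand, transience gives $\sum_k p_{2k}(\id_{\mathsf{G}}, \id_{\mathsf{G}}) \leq G(\id_{\mathsf{G}}) < \infty$, and the comparison above implies $\sum_{n \geq N} p_n(\id_{\mathsf{G}}, g) \leq 2 \sum_{k \geq \lfloor N/2\rfloor} p_{2k}(\id_{\mathsf{G}}, \id_{\mathsf{G}})$, which can be made arbitrarily small by choosing $N$ large, uniformly in $g$. For the head, since $p_n(\id_{\mathsf{G}}, \cdot)$ is a probability mass function, the set $\{g : p_n(\id_{\mathsf{G}}, g) \geq \varepsilon\}$ contains at most $1/\varepsilon$ elements and is in particular finite, so $p_n(\id_{\mathsf{G}}, g) \to 0$ as $g$ leaves any finite subset of $\mathsf{G}$. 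Given $\varepsilon > 0$, I would fix $N$ making the tail below $\varepsilon/2$, then observe that $\sum_{n<N} p_n(\id_{\mathsf{G}}, g) \to 0$ as $g \to \infty$, which yields $G(g) < \varepsilon$ for $g$ far enough from $\id_{\mathsf{G}}$. The only nontrivial point is the uniform Cauchy--Schwarz bound, which uses nothing beyond self-adjointness of $P$; no amenability, moment, or geometric hypothesis on $\mathsf{G}$ enters here, in contrast with the sharper estimates referenced from \cite{Woess}.
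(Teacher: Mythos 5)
Your proof is correct and follows essentially the same strategy as the paper's: use reversibility and Cauchy--Schwarz to bound $p_n(\id_{\mathsf{G}}, g)$ uniformly by the diagonal return probabilities, split $G(g)$ into a head and a tail, and control them separately. The only minor difference is that you handle odd times via the monotonicity $\|P^{m+1}\delta_{\id_{\mathsf{G}}}\|_2 \leq \|P^m\delta_{\id_{\mathsf{G}}}\|_2$, whereas the paper conditions on the first step; both give the same bound $p_{2m+1}(\id_{\mathsf{G}}, g)\leq p_{2m}(\id_{\mathsf{G}}, \id_{\mathsf{G}})$.
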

\begin{proof}
Let $p_n(g, h)$ denote the probability that the walk starting at $g$ is at
position $h$ at time $n$. Then it is a standard fact that
$p_{2n}(\id_{\mathsf{G}}, g)$ is maximal for $g=\id_{\mathsf{G}}$. This is
proved using Cauchy-Schwarz inequality as follows:
\begin{align*}
p_{2n}(\id_{\mathsf{G}},g) & = \sum_h p_n(\id_{\mathsf{G}}, h) p_n(h, g)
  \leq \pare*{ \sum_h p_n(\id_{\mathsf{G}},h)^2}^{1/2} \pare*{ \sum_h p_n(h,g)^2}^{1/2}
  \\&
  = \pare*{ \sum_h p_n(\id_{\mathsf{G}},h) p_n(h,\id_{\mathsf{G}})}^{1/2} \pare*{ \sum_h p_n(g,h) p_n(h,g)}^{1/2}
  \\&
  = p_{2n}(\id_{\mathsf{G}},\id_{\mathsf{G}})^{1/2} p_{2n}(g,g)^{1/2}
  = p_{2n} (\id_{\mathsf{G}},\id_{\mathsf{G}}).
\end{align*}
Conditioning on the position of the walk at time $1$, one also gets
$p_{2n+1}(\id_{\mathsf{G}},g) \leq p_{2n}(\id_{\mathsf{G}},
\id_{\mathsf{G}})$. Therefore, for any $g$ and any $N$,
\begin{equation*}
  \sum_{n=2N}^\infty p_n(\id_{\mathsf{G}}, g) \leq
  2 \sum_{n=2N}^\infty p_n(\id_{\mathsf{G}}, \id_{\mathsf{G}}).
\end{equation*}
The right hand side is the tail of the converging series $\sum_n
p_n(\id_{\mathsf{G}}, \id_{\mathsf{G}}) = G(\id_{\mathsf{G}})$. If $N$ is
large enough, it is bounded by an arbitrarily small constant $\epsilon$. For
each $n<2N$, the measure $p_n$ is a probability measure on $\mathsf{G}$.
Hence, there are only finitely many points $g$ for which
$p_n(\id_{\mathsf{G}},g) > \epsilon/(2N)$. Summing over $n < 2N$, it follows
that for all but finitely many points one has $\sum_{n<2N}
p_n(\id_{\mathsf{G}},g) \leq \epsilon$, and therefore $G(g) = \sum_{n}
p_n(\id_{\mathsf{G}},g) \leq 2\epsilon$.
\end{proof}

The next lemma is the main step of the proof of Theorem~\ref{thm:Green_vanishes}.
\begin{lemma}
\label{lem:bounded_below_on_S} Consider a finitely generated group
$\mathsf{G}$, with a finite generating set $S$. Assume that the simple random
walk on $\mathsf{G}$ is transient. Let $c>0$. Consider a probability measure
$p$ on $\mathsf{G}$ with $p(s) \geq c$ for all $s\in S$. Then its Green
function tends to $0$ at infinity.
\end{lemma}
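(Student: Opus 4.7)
The plan is to reduce to the symmetric case handled by Lemma~\ref{lem:symmetric_0}. Since $p(s)\geq c$ for every $s\in S$, set $\alpha:=c|S|\in(0,1]$ and write $p=\alpha q+(1-\alpha)r$, where $q$ is the uniform measure on $S$ (the symmetric simple random walk) and $r:=(p-\alpha q)/(1-\alpha)$ is a probability measure on $\mathsf{G}$. This realises each $p$-step as an $\alpha$-biased coin toss between a $q$-step and an $r$-step.

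I would first perform a skeleton decomposition of the $p$-walk at its $q$-step times. Let $\tau_k$ denote the time of the $k$-th $q$-step, so that the spacings $\tau_k-\tau_{k-1}$ are i.i.d.\ geometric with parameter $\alpha$, and set $Y_k:=S_{\tau_k}^p$; the sequence $(Y_k)$ is a random walk on $\mathsf{G}$ with some step distribution $\mu$. Decomposing the local time of $(S_n^p)$ at a point $g$ into contributions from the excursions $[\tau_k,\tau_{k+1})$ and integrating out the geometric waiting times yields the identity
\[
G_p(g)=\sum_{y\in\mathsf{G}}G_\mu(y)\,\nu(y^{-1}g),
\]
where $\nu:=\sum_{j\geq 0}(1-\alpha)^j r^{*j}$ is a positive measure on $\mathsf{G}$ of total mass $1/\alpha$. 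Since $\nu$ is a finite measure, a straightforward splitting argument—isolating the $y$ for which $y^{-1}g$ lies in a large finite set $F\subset\mathsf{G}$ with $\nu(\mathsf{G}\setminus F)$ small, from its complement—shows that $G_\mu(g)\to 0$ at infinity forces $G_p(g)\to 0$ at infinity. The problem therefore reduces to proving the decay of $G_\mu$.

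The skeleton $\mu$ is admissible on $\mathsf{G}$ (its support contains $S$), but typically not symmetric, so Lemma~\ref{lem:symmetric_0} does not apply to it directly. To close the loop I would symmetrise: $\tilde\mu:=\mu*\check\mu$ is a symmetric, admissible probability measure on $\mathsf{G}$, and since the transience of the simple walk forces $\mathsf{G}$ to be neither virtually cyclic nor virtually $\mathbb{Z}^2$, every admissible symmetric walk on $\mathsf{G}$ is transient, so Lemma~\ref{lem:symmetric_0} gives $G_{\tilde\mu}(g)\to 0$. The main obstacle I anticipate is transferring this decay back to $G_\mu$ itself; I would attempt this via a careful combination of the Harnack inequality for Green functions \cite[25.1]{Woess}—which controls ratios $G_\mu(g_1)/G_\mu(g_2)$ in terms of the word distance—with an $\ell^2$-type estimate exploiting the identity $\|\mu^{*k}\|_2^2=(\mu^{*k}*\check\mu^{*k})(\id_\mathsf{G})$ to relate returns of $\mu$ to the symmetrised walk $\tilde\mu$. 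Making this transfer rigorous without any further structural assumption on $\mathsf{G}$ is the delicate step.
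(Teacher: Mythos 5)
Your first reduction is correct but does not actually advance the problem. The skeleton decomposition is fine: with $\mu=\sum_{j\ge 0}\alpha(1-\alpha)^j r^{*j}*q$ and $\nu=\sum_{j\ge 0}(1-\alpha)^j r^{*j}$ one indeed has $G_p=G_\mu*\nu$, and since $\nu$ has total mass $1/\alpha$, decay of $G_\mu$ would imply decay of $G_p$. But $\mu\ge\alpha q$, i.e.\ $\mu(s)\ge c$ for every $s\in S$, so $\mu$ is just another measure satisfying the hypotheses of the lemma: nothing has been gained, and the whole difficulty sits in the step you yourself flag as ``delicate'', namely deducing decay of $G_\mu$ from decay of $G_{\mu*\check\mu}$. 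That step is essentially the lemma itself, and the tools you propose do not accomplish it: Harnack's inequality \cite[25.1]{Woess} compares values of one and the same Green function at points a bounded word-distance apart, so it can only propagate decay along bounded perturbations, not transfer it from the symmetrized walk to $\mu$; and identities such as $\|\mu^{*k}\|_2^2=(\mu^{*k}*\check\mu^{*k})(\id_{\mathsf{G}})$ (note $\mu^{*k}*\check\mu^{*k}\neq(\mu*\check\mu)^{*k}$ in a nonabelian group) give on-diagonal and $\ell^2$-norm information only, whereas what you need is pointwise off-diagonal decay of a non-symmetric Green function. No soft argument of this kind converts the one into the other; that pointwise transfer is exactly what has to be proved.

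The paper closes this gap with two ingredients absent from your plan. First, the comparison with the symmetric walk is made not pointwise but at the level of quadratic forms: by \cite[Proposition, Page 251]{Varopoulos}, the hypothesis $p(s)\ge c$ on $S$ gives $\sum_{g,h}f(g)G(g,h)f(h)\le c^{-1}\sum_{g,h}f(g)G_S(g,h)f(h)$ for every nonnegative square-summable $f$, where $G_S$ is the Green function of the simple random walk, whose decay at infinity is Lemma~\ref{lem:symmetric_0}. Second, since a quadratic-form inequality does not by itself yield pointwise decay, one argues by contradiction: if $G>\epsilon G(\id_{\mathsf{G}})$ on an infinite set, one extracts points $h_0,h_1,\dotsc$ pairwise at distance at least $M$, together with finite unions of cylinders $T_i$ of trajectories reaching $h_i$, satisfying $\mathbb{P}(T_i)>\epsilon$ and $\mathbb{P}(T_i\cap T_j)\le G(h_i,h_j)/G(\id_{\mathsf{G}})$ for $i\le j$; testing the form inequality on the indicator of $\{h_0,\dotsc,h_{N-1}\}$ then yields $\tfrac12 G(\id_{\mathsf{G}})\epsilon^2N^2\le c^{-1}\bigl(NG_S(\id_{\mathsf{G}})+N^2\eta(M)\bigr)$ with $\eta(M)\to 0$, which is contradictory for $M$ large and $N\to\infty$. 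Unless you can supply a mechanism of comparable strength relating $G_\mu$ to $G_{\mu*\check\mu}$ (or directly to $G_S$), your argument stops exactly where the real work begins.
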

\begin{proof}
The proof will rely on a classical comparison lemma, making it possible to relate general random
walks to symmetric ones. Denote by $G$ the Green function associated to $p$, and by $G_S$ the Green function
associated to the simple random walk. Under the assumptions of the lemma, \cite[Proposition, Page 251]{Varopoulos}
ensures that for any nonnegative square-integrable function $f$ on $\mathsf{G}$,
\begin{equation}
\label{eq:varopoulos}
  \sum_{g,h \in \mathsf{G}} f(g) G(g,h) f(h) \leq c^{-1} \sum_{g,h \in \mathsf{G}} f(g) G_S(g,h) f(h).
\end{equation}

Let $\Omega = \mathsf{G}^{\mathbb{N}}$ be the space of all possible
trajectories, with the (Markov) probability $\mathbb{P}$ corresponding to the
distribution of the random walk given by $p$ starting from
$\id_{\mathsf{G}}$. A cylinder set is a set of the form
\[
  [g_0,\dotsc, g_n] = \{\omega \in \Omega \mid \omega_0 = g_0,\cdots,\omega_n = g_n\} \subseteq \Omega.
\]

Assume by contradiction that $G(g)$ does not tend to $0$ at infinity. Then
one can find $\epsilon>0$ and an infinite set $I \subseteq G$ on which $G(g)
> \epsilon G(\id_{\mathsf{G}})$. Let $M>0$ be large enough (how large will be specified at
the end of the argument). We define a sequence $h_n$ of elements of $I$ as
follows.
\begin{itemize}
\item First, take $h_0 = \id_{\mathsf{G}}$. Let also $T_0 =
    [\id_{\mathsf{G}}] \subseteq \Omega$ be the set of all trajectories
    starting from $\id_{\mathsf{G}}$, and $R_0 = \{\id_{\mathsf{G}}\}$.
\item Then, take $h_1 \in I$ at distance at least $M$ of $h_0$. As
    $G(h_1)/G(\id_{\mathsf{G}})>\epsilon$ is the probability to reach $h_1$
    starting from $\id_{\mathsf{G}}$, one can find (by throwing away very
    long trajectories or very unlikely trajectories in finite time) a
    finite number of cylinder sets starting at $\id_{\mathsf{G}}$ and
    ending at $h_1$ with total probability $>\epsilon$. Denote this set of
    trajectories by $T_1$, with $\mathbb{P}(T_1) > \epsilon$. Let $R_1$ be
    the set of points that trajectories in $T_1$ reach before $h_1$. As
    $T_1$ is a finite union of cylinders, $R_1$ is finite.
\item Then, take $h_2 \in I$ at distance at least $M$ of $h_0$ and $h_1$.
    We can also require that it does not belong to $R_0 \cup R_1$, as this
    set is finite while $I$ is infinite. As above, we can then define a set
    $T_2$ which is a finite union of cylinders ending at $h_2$ with
    $\mathbb{P}(T_2)>\epsilon$, and $R_2$ the finite set of points reached
    by these trajectories before $h_2$.
\item The construction goes on inductively to define $h_n$.
\end{itemize}
Let us denote by $F(g, h) = G(g, h)/G(\id_{\mathsf{G}})$ the probability to
reach $h$ starting from $g$. The point of the previous construction is that
for $i\leq j$ we have the inequality
\begin{equation}
\label{eq:traj_below}
  \mathbb{P}(T_i \cap T_j) \leq F(h_i, h_j).
\end{equation}
Indeed, this is clear for $i=j$. For $i<j$, note that trajectories in $T_i
\cap T_j$ reach $h_i$ and then $h_j$, in this order as $h_j \notin R_i$.
Therefore,
\begin{equation*}
  \mathbb{P}(T_i \cap T_j) \leq \mathbb{P}(\exists n< m, S_n = h_i \text{ and } S_m = h_j)
  =F(\id_{\mathsf{G}}, h_i) \cdot F(h_i, h_j) \leq F(h_i, h_j),
\end{equation*}
where the central equality follows from the Markov property. This proves~\eqref{eq:traj_below}.

Let us now take $N$ large, and apply the inequality~\eqref{eq:varopoulos} to
the characteristic function of $\{h_0,\cdots, h_{N-1}\}$. We obtain
\begin{equation}
\label{eq:varopoulos2}
  \sum_{i,j < N} G(h_i, h_j) \leq c^{-1} \sum_{i, j < N} G_S(h_i, h_j).
\end{equation}
We will bound the left hand side from below and the right hand side from
above to get a contradiction. Thanks to~\eqref{eq:traj_below}, we have
\begin{align*}
  \sum_{i,j < N} G(h_i, h_j) & \geq \sum_{i \leq j < N} G(h_i, h_j)
  = G(\id_{\mathsf{G}})  \sum_{i \leq j < N} F(h_i, h_j)
  \geq G(\id_{\mathsf{G}}) \sum_{i\leq j < N} \mathbb{P}(T_i \cap T_j)
  \\&
  \geq \frac{G(\id_{\mathsf{G}})}{2} \sum_{i, j < N} \mathbb{P}(T_i \cap T_j)
  = \frac{G(\id_{\mathsf{G}})}{2} \int \pare*{\sum_{i<N} 1_{T_i}}^2 \rd\mathbb{P}
  \geq \frac{G(\id_{\mathsf{G}})}{2} \pare*{ \int \sum_{i<N} 1_{T_i} \rd\mathbb{P}}^2
  \\&
  = \frac{G(\id_{\mathsf{G}})}{2} \pare*{ \sum_{i<N} \mathbb{P}(T_i)}^2
  \geq \frac{G(\id_{\mathsf{G}})}{2} \epsilon^2 N^2.
\end{align*}

The Green function $G_S$ tends to $0$ at infinity, by Lemma~\ref{lem:symmetric_0}. As the distance
between $h_i$ and $h_j$ is at least $M$ for $i\neq j$ by construction, it follows that
$G_S(h_i, h_j) \leq \eta(M)$ where $\eta$ tends to $0$ with $M$. We obtain
\begin{align*}
  \sum_{i, j < N} G_S(h_i, h_j)
  \leq \sum_{i=j} G_S(h_i, h_j) + \sum_{i \neq j} G_S(h_i, h_j)
  \leq N G_S(\id_{\mathsf{G}}) + N^2 \eta(M).
\end{align*}
Combining these two estimates with~\eqref{eq:varopoulos2} yields
\begin{equation*}
  \frac{G(\id_{\mathsf{G}})}{2} \epsilon^2 N^2 \leq c^{-1} N G_S(\id_{\mathsf{G}}) + c^{-1} N^2 \eta(M).
\end{equation*}
We obtain a contradiction by taking $M$ large enough so that $c^{-1} \eta(M)
< G(\id_{\mathsf{G}}) \epsilon^2/2$, and then letting $N$ tend to infinity.
\end{proof}

\begin{proof}[Proof of Theorem~\ref{thm:Green_vanishes}]
The result follows from Lemma~\ref{lem:Z2} if $\mathsf{G}$ is virtually
$\mathbb{Z}^2$. Hence, we can assume that this is not the case, and therefore
that the simple random walk on $\mathsf{G}$ is transient.

The Green functions for the probability measures $p$ and
$(p+\delta_{\id_{\mathsf{G}}})/2$ are related by the identity
$G_{(p+\delta_{\id_{\mathsf{G}}})/2}(g) = 2 G_p(g)$,
by~\cite[Lemma~9.2]{Woess}. Without loss of generality, we can therefore
replace $p$ with $(p+\delta_{\id_{\mathsf{G}}})/2$ and assume
$p(\id_{\mathsf{G}})>0$. As $p$ is admissible, it follows that there exists
$N$ with $p_N(\id_{\mathsf{G}},s)>0$ for all $s$ in the generating set $S$.
By Lemma~\ref{lem:bounded_below_on_S}, the Green function associated to
$p_N$, denoted by $G_N$, tends to $0$ at infinity.

To compute $G(g)$, split the arrival times at $g$ according to their values
modulo $N$. For times of the form $i + kN$, such arrivals can be realized by
following $p$ for $i$ steps, and then $p_N$ for $k$ steps. It follows that
\begin{equation*}
  G(g) = \sum_{h \in \mathsf{G}} \sum_{i<N} p_i(\id_{\mathsf{G}},h) G_N(h^{-1}g).
\end{equation*}
Let $\epsilon>0$. Take a finite set $F \subset \mathsf{G}$ such that $\sum_{h
\notin F}\sum_{i<N} p_i(\id_{\mathsf{G}},h) < \epsilon$. Then
\begin{equation*}
  G(g) \leq \sum_{h \in F} \sum_{i<N} p_i(\id_{\mathsf{G}},h) G_N(h^{-1}g) + \epsilon \|G_N\|_{L^\infty}.
\end{equation*}
When $g$ tends to infinity, the first term tends to $0$ as this is a finite
sum and $G_N$ tends to $0$ at infinity. For large enough $g$, wet get $G(g)
\leq 2\epsilon  \|G_N\|_{L^\infty}$.
\end{proof}

\bibliographystyle{halpha}
\bibliography{myBIB}

\begin{thebibliography}{BKYY10}

\bibitem[Aar12]{Aa12}
Jon Aaronson.
\newblock Relative complexity of random walks in random sceneries.
\newblock {\em Ann. Probab.}, 40(6):2460--2482, 2012.

\bibitem[AS17]{AS17}
Amine Asselah and Bruno Schapira.
\newblock Boundary of the range of transient random walk.
\newblock {\em Probab. Th. Rel. Fields}, 168(3-4):691--719, 2017.

\bibitem[ASS16a]{asselah2016capacity4}
A.~Asselah, B.~Schapira, and P.~Sousi.
\newblock Capacity of the range of random walk on {$\mathbb{Z}^4$}.
\newblock {\em Ann. Probab., to appear}, 2016, arXiv:1611.04567.

\bibitem[ASS16b]{asselah2016capacityd}
A.~Asselah, B.~Schapira, and P.~Sousi.
\newblock Capacity of the range of random walk on {$\mathbb{Z}^d$}.
\newblock {\em Trans. Amer. Math. Soc., to appear}, 2016, arXiv:1602.03499.

\bibitem[Atk76]{Atkinson}
Giles Atkinson.
\newblock Recurrence of co-cycles and random walks.
\newblock {\em J. London Math. Soc. (2)}, 13(3):486--488, 1976.

\bibitem[BGT89]{Bingham_89}
N.~H. Bingham, C.~M. Goldie, and J.~L. Teugels.
\newblock {\em Regular variation}, volume~27 of {\em Encyclopedia of
  Mathematics and its Applications}.
\newblock Cambridge University Press, Cambridge, 1989.

\bibitem[BKYY10]{Benjamini-Kozma-Rosen-Yehudayoff.}
I.~Benjamini, G.~Kozma, A.~Yadin, and A.~Yehudayoff.
\newblock Entropy of random walk range.
\newblock {\em Ann. Inst. Henri Poincar\'{e} Probab. Stat.}, 46(4):1080--1092,
  2010.

\bibitem[Boy64]{B64}
Edward~S. Boylan.
\newblock Local times for a class of {M}arkoff processes.
\newblock {\em Illinois J. Math.}, 8:19--39, 1964.

\bibitem[DE51]{DE51}
A.~Dvoretzky and P.~Erd\"{o}s.
\newblock Some problems on random walk in space.
\newblock In {\em Proceedings of the {S}econd {B}erkeley {S}ymposium on
  {M}athematical {S}tatistics and {P}robability, 1950}, pages 353--367.
  University of California Press, Berkeley and Los Angeles, 1951.

\bibitem[DK17]{Deligiannidis-Kosloff}
G.~Deligiannidis and Z.~Kosloff.
\newblock Relative complexity of random walks in random scenery in the absence
  of a weak invariance principle for the local times.
\newblock {\em Ann. Probab.}, 45(4):2505--2532, 2017.

\bibitem[ET60]{ET60}
P.~Erd\"{o}s and S.~J. Taylor.
\newblock Some problems concerning the structure of random walk paths.
\newblock {\em Acta Math. Acad. Sci. Hungar.}, 11:137--162, 1960.

\bibitem[Fla76]{Flatto}
L.~Flatto.
\newblock The multiple range of two-dimensional recurrent walk.
\newblock {\em Ann. Probability}, 4(2):229--248, 1976.

\bibitem[GJP84]{GJP84}
P.S. Griffin, N.C. Jain, and W.E. Pruitt.
\newblock Approximate local limit theorems for laws outside domains of
  attraction.
\newblock {\em Ann. Probab.}, 12(1):45--63, 1984.

\bibitem[IL75]{Ibragimov1975independent}
I.A. Ibragimov and Y.~V. Linnik.
\newblock Independent and stationary sequences of random variables.
\newblock 1975.

\bibitem[JP72]{JP72}
N.C. Jain and W.E. Pruitt.
\newblock The range of random walk.
\newblock pages 31--50, 1972.

\bibitem[Kes63]{Kes}
H.~Kesten.
\newblock Ratio theorems for random walks. {II}.
\newblock {\em J. Analyse Math.}, 11:323--379, 1963.

\bibitem[KS63]{KS63}
H.~Kesten and F.~Spitzer.
\newblock Ratio theorems for random walks. {I}.
\newblock {\em J. Analyse Math.}, 11:285--322, 1963.

\bibitem[LGR91]{LeR}
J.F. Le~Gall and J.~Rosen.
\newblock The range of stable random walks.
\newblock {\em Ann. Probab.}, 19(2):650--705, 1991.

\bibitem[Oka16]{Okada}
I.~Okada.
\newblock The inner boundary of random walk range.
\newblock {\em J. Math. Soc. Japan}, 68(3):939--959, 2016.

\bibitem[Sch77]{Schmidt}
K.~Schmidt.
\newblock {\em Cocycles on ergodic transformation groups}.
\newblock Macmillan Company of India, Ltd., Delhi, 1977.
\newblock Macmillan Lectures in Mathematics, Vol. 1.

\bibitem[Spi76]{Spitzer}
F.~Spitzer.
\newblock {\em Principles of random walk}.
\newblock Springer-Verlag, New York-Heidelberg, second edition, 1976.
\newblock Graduate Texts in Mathematics, Vol. 34.

\bibitem[Szn10]{Sznitman}
A.-S. Sznitman.
\newblock Vacant set of random interlacements and percolation.
\newblock {\em Ann. of Math. (2)}, 171(3):2039--2087, 2010.

\bibitem[Var83]{Varopoulos}
Nicolas~Th. Varopoulos.
\newblock Brownian motion and transient groups.
\newblock {\em Ann. Inst. Fourier (Grenoble)}, 33(2):241--261, 1983.

\bibitem[Woe00]{Woess}
Wolfgang Woess.
\newblock {\em Random walks on infinite graphs and groups}, volume 138 of {\em
  Cambridge Tracts in Mathematics}.
\newblock Cambridge University Press, Cambridge, 2000.

\end{thebibliography}

\end{document}